\newcommand{\N}{\mathbb{N}}
\newcommand{\Z}{\mathbb{Z}}
\newcommand{\Q}{\mathbb{Q}}
\newcommand{\R}{\mathbb{R}}
\newcommand{\C}{\mathbb{C}}
\newcommand{\A}{\mathbb{A}}
\newcommand{\F}{\mathsf{F}}
\newcommand{\W}{\mathsf{W}}
\newcommand{\p}{\mathsf{P}}
\newcommand{\V}{\mathsf{V}}
\newcommand{\I}{\mathsf{I}}
\newcommand{\Lv}{\mathrm{Lv}}
\newcommand{\Ch}{\mathrm{Ch}}
\newcommand{\Fi}{\mathrm{Fib}}
\newcommand{\Pal}{\mathrm{Pal}}
\newcommand{\Fa}{\mathcal{F}}
\newcommand{\Gal}{\mathcal{G}}
\newcommand{\X}{\mathcal{X}}
\newcommand{\Sc}{\mathcal{S}}
\newcommand{\BQ}{\mathcal{BQ}}
\newcommand{\PS}{\mathcal{PS}}
\newcommand{\BI}{\mathcal{BI}}
\newcommand{\lt}{\mathcal{L}}
\newcommand{\rt}{\mathcal{R}}
\newcommand{\Hom}{\mathrm{Hom}}
\newcommand{\Inn}{\mathsf{Inn}}
\newcommand{\Aut}{\mathsf{Aut}}
\newcommand{\Out}{\mathsf{Out}}
\newcommand{\Spe}{\mathsf{Spe}}
\newcommand{\Sym}{\mathsf{Sym}}
\newcommand{\Prim}{\mathsf{Prim}}
\newcommand{\Axis}{\mathsf{Axis}}
\newcommand{\Isom}{\mathsf{Isom}}
\newcommand{\id}{\textup{Id}}
\newcommand{\GL}{\mathsf{GL}}
\newcommand{\SL}{\mathsf{SL}}
\newcommand{\PGL}{\mathsf{PGL}}
\newcommand{\PSL}{\mathsf{PSL}}
\newcommand{\tr}{\mathsf{tr}}
\newcommand{\am}{\mathsf{am}}
\newcommand{\Ga}{\Gamma}
\newcommand{\al}{\alpha}
\newcommand{\de}{\delta}
\newcommand{\ep}{\epsilon}
\newcommand{\io}{\iota}
\newcommand{\ka}{\kappa}
\newcommand{\la}{\lambda}
\newcommand{\si}{\sigma}
\newcommand{\ti}{\widetilde}
\newcommand{\un}{\underline}
\newcommand{\tho}{\widetilde{\rho}}
\newcommand{\e}{\mathsf{e}}
\newcommand{\f}{\mathsf{f}}
\newcommand{\g}{\mathsf{g}}
\newcommand{\h}{\mathcal{H}}
\renewcommand{\H}{\mathbb{H}}
\newcommand{\hex}{\mathsf{Hex}}
\newcommand{\sslash}{\mathbin{/\mkern-6mu/}}
\numberwithin{equation}{section}
\numberwithin{figure}{subsection}
\newtheorem{introthm}{Theorem}
\theoremstyle{plain}
\newtheorem{theorem}[equation]{Theorem}
\newtheorem{lemma}[equation]{Lemma}
\newtheorem{proposition}[equation]{Proposition}
\theoremstyle{definition}
\newtheorem{definition}[equation]{Definition}
\theoremstyle{remark}
\newtheorem{remark}[equation]{Remark}
\newtheorem{convention}[equation]{Convention}
\newtheorem{question}[equation]{Question}
\title{Bowditch's Q-conditions and Minsky's primitive stability}
\author{Jaejeong Lee and Binbin Xu}
\date{}
\begin{document}

\maketitle

\begin{abstract}
For the action of the outer automorphism group of the rank two free group on the corresponding variety of $\PSL(2,\C)$ characters, two domains of discontinuity have been known to exist that are strictly larger than the set of Schottky characters. One is introduced by Bowditch in 1998 (followed by Tan, Wong and Zhang in 2008) and the other by Minsky in 2013. We prove that these two domains are equal. We then show that they are contained in the set of characters having what we call the bounded intersection property.
\end{abstract}

\tableofcontents

\section{Introduction}

Let $\F_n$ denote the free group on $n\ge2$ generators, and set $G=\PSL(2,\C)$. The space of representations $\Hom(\F_n,G)$ admits a natural action of $\Aut(\F_n)\times\Aut(G)$ given by
\begin{alignat*}{5}
\Aut(\F_n)&\times\Aut(G)&&:&\;\Hom(\F_n&,G)&&\longrightarrow&&\;\Hom(\F_n,G)\\
(&f,g)&&:&&\rho&&\longmapsto&&\;\;g\circ\rho\circ f^{-1}
\end{alignat*}
The $G$-character variety $\X(\F_n)$ of $\F_n$ is then defined as the GIT quotient
\begin{align*}
\X(\F_n)=\mathrm{Hom}(\F_n,G)\sslash \boldsymbol{\left(\right.}\{Id\}\times\Inn(G)\boldsymbol{\left.\right)}
\end{align*}
by the action of inner automorphisms of $G$. It then admits an action by the outer automorphism group
\[
\Out(\F_n)=\Aut(\F_n)/\Inn(\F_n),
\]
since the inner  automorphism group $\Inn(\F_n)$ acts trivially. 

Regarding the dynamical properties of the action of $\Out(\F_n)$ on $\X(\F_n)$, one of the classical results is that this action is properly discontinuous on the open subset $\Sc\subset\X(\F_n)$ consisting of Schottky characters. One may then ask if $\Sc$ is a maximal domain of discontinuity for the action. Recently, Minsky \cite{Min} answered the question in the negative: an open subset $\PS$, consisting of \emph{primitive stable} characters, is strictly larger than $\Sc$
\[
\Sc\subsetneq\PS\subset\X(\F_n)
\]
and admits a properly discontinuous action of $\Out(\F_n)$. In particular, since $\Sc$ is known to be the interior of the closed subset consisting of characters coming from injective representations with discrete image, we see that $\PS$ contains characters coming from representations with non-discrete image. Thus Minsky's result shows that the geometric decomposition (discrete vs. dense) of the characters is different from the dynamical decomposition (properly discontinuous vs. ergodic) in this case. For more results about dynamics on representation spaces we refer the reader to the survey papers of Goldman \cite{Gol06} (for surfaces and locally homogeneous geometric structures on them), Lubotzky \cite{Lub11} (for free groups and various target groups), and Canary \cite{Can15} (for word hyperbolic groups into semi-simple Lie groups with no compact factors). 

\bigskip

We will focus on the special case of $n=2$ in the present paper, where another domain of discontinuity for the action of $\Out(\F_2)$ has been known to exist. Bowditch \cite{Bow} introduced a notion called the \emph{Q-conditions} and investigated the type-preserving characters in $\X(\F_2)$ in an attempt to characterize the quasi-Fuchsian once-punctured torus groups. Generalizing Bowditch's work, Tan, Wong and Zhang \cite{TWZ08} tested the Q-conditions for \emph{all} characters in $\X(\F_2)$ and showed that the characters satisfying the Q-conditions form an open subset $\BQ\subset\X(\F_2)$ and that $\Out(\F_2)$ acts on it properly discontinuously. As an interesting property of characters in $\BQ$, Bowditch and Tan-Wong-Zhang showed that a variation of McShane's identity holds for them.

In the first part of the paper we study the relation between these two domains $\BQ$ and $\PS$. Very roughly speaking, the primitive stability of $\rho\in\Hom(\F_2,\PSL(2,\C))$ means that under the orbit map of the $\rho$-action on $\H^3$, the axes in $\F_2$ of primitive words are sent to uniform quasi-geodesics (Definitions~\ref{def:ps} and \ref{def:altps}). On the other hand, the Q-conditions on $\rho$ mean that for every primitive word, its cyclically reduced length and the translation distance of its $\rho$-action on $\H^3$ are comparable (Definition~\ref{def:bq} and Theorem~\ref{thm:bqpd}). It is not hard to see from the definitions that we have the inclusion
\[
\PS\subset\BQ\subset\X(\F_2).
\]
A proof is given by Lupi in his thesis \cite{Lup}*{Proposition 2.9}, but we also provide a different proof in Proposition~\ref{prop:psbq} to make the paper self-contained.

A number of authors questioned if these two conditions are actually equivalent. See \cite{Min}*{p.64}, \cite{Can15}*{\S.7.1} and \cite{Lup}*{\S.4.2}. Also compare \cite{Gol06}*{Problem 3.2}. Our first theorem answers the question in the affirmative:
\begin{introthm}\label{I}
If a representation $\rho:\F_2\to\PSL(2,\C)$ satisfies the Q-conditions, then it is primitive stable. Consequently, we have the equality
\[
\PS=\BQ.
\]
\end{introthm}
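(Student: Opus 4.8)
The plan is to verify primitive stability directly from the definition (Definition~\ref{def:ps}): to produce constants $K\ge 1$ and $C\ge 0$, depending on $\rho$ but not on the word, such that for every primitive $w\in\F_2$ the image under the orbit map $g\mapsto\rho(g)o$ (for a fixed basepoint $o\in\H^3$) of the axis of $w$ in the Cayley tree of $\F_2$ is a $(K,C)$-quasigeodesic in $\H^3$. Since this orbit map is coarsely Lipschitz and the axis of $w$ spells a cyclically reduced representative of $w$ repeated bi-infinitely, the task amounts to a uniform quasigeodesic estimate for these periodic orbit paths and all of their sub-paths. It is worth noting at the outset that the comparability of the translation length $\ell_\rho(w)$ with the cyclically reduced length of $w$ — already available from Theorem~\ref{thm:bqpd} — controls only the geometry ``around one full period'' and says nothing about how efficiently the orbit path travels within a single period; supplying the latter is the real content of the theorem.

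The first step is to read the Q-conditions through the combinatorics of the tree $\Sigma$ dual to the Farey triangulation, as in Theorem~\ref{thm:bqpd}: there is a non-empty finite connected subtree $T_\rho\subset\Sigma$ such that the trace function $X\mapsto|\tr\rho(X)|$ on primitive conjugacy classes grows — at a uniform Fibonacci-like rate — as one moves through $\Sigma$ away from $T_\rho$. Concretely, only finitely many primitive classes carry $|\tr\rho(X)|\le 2$, there is a gap $|\tr\rho(X)|\ge 2+\eta_0>2$ off that finite set, and $\log|\tr\rho(X)|$ grows at least linearly with the distance in $\Sigma$ from $T_\rho$. Next, for a primitive $w$ of slope $X$ I would use the continued-fraction / Farey description of $w$ to cut its cyclic word into blocks along break points dictated by the path in $\Sigma$ associated with $X$ — a single coarse block for the bounded portion lying over $T_\rho$, and finer blocks, at the scale of individual Farey moves, for the portion escaping $T_\rho$ in each of the two directions. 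Tracking the partial products of these blocks under $\rho$ produces a bi-infinite broken geodesic $\Pi_w$ in $\H^3$ that lies within bounded Hausdorff distance of the orbit image of the axis of $w$.

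The heart of the proof is then to show, with constants independent of $w$, that (a) each geodesic segment of $\Pi_w$ has length at least $L_0>0$, and (b) consecutive segments of $\Pi_w$ turn definitely, i.e.\ $d(x_{i-1},x_{i+1})\ge d(x_{i-1},x_i)+d(x_i,x_{i+1})-D_0$ for a uniform $D_0$ (equivalently, they meet at an angle bounded away from $0$). For the segment corresponding to the core block, (a) holds because the Q-conditions force $\tr\rho(\cdot)\ne\pm2$ on every primitive — so every relevant element acts with strictly positive displacement — and the finiteness of $T_\rho$ turns this into a common lower bound; for the blocks escaping $T_\rho$, both (a) and (b) are extracted from the gap $2+\eta_0$ and the Fibonacci growth by the standard dictionary relating the traces of $\rho(x),\rho(y),\rho(xy)$ of a basis pair $(x,y)$ adapted to a Farey triangle to the relative position of $\Axis(\rho(x))$ and $\Axis(\rho(y))$. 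After passing to a coarser decomposition if needed one arranges $L_0$ large compared with $D_0$ and with a hyperbolicity constant $\delta$ of $\H^3$, and then applies the local-to-global principle for quasigeodesics: a bi-infinite broken geodesic with all segment lengths $\ge L_0$ and consecutive turning controlled by $D_0\ll L_0$ is a $(K,C)$-quasigeodesic with $(K,C)$ depending only on $L_0,D_0,\delta$. Transporting this back through the bounded Hausdorff distance above gives primitive stability, and combined with the inclusion $\PS\subset\BQ$ of Proposition~\ref{prop:psbq} we obtain $\PS=\BQ$.

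I expect the main obstacle to be the uniformity in (a)–(b). Two difficulties are packed into it. First, near the finite core $T_\rho$ the loxodromic-displacement and transversality estimates degenerate, since traces of classes near $T_\rho$ may be arbitrarily close to $\pm2$ — the Q-conditions forbid only equality; this is handled by observing that the affected part of $\Pi_w$ has combinatorial size bounded purely in terms of $T_\rho$, hence of $\rho$, so it contributes only a bounded, bounded-complexity defect that is absorbed into $(K,C)$. Second, and more seriously, one must keep the local estimates uniform as the block decomposition recurses outward along the Farey path — in particular the basepoint's displacement from the relevant axes must be kept negligible relative to the growing translation lengths — and it is here that the exponential (Fibonacci) nature of the trace growth guaranteed by the Q-conditions, rather than merely linear growth, is what makes the argument close.
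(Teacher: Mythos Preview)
Your plan and the paper's share the same top-level architecture: use the finite attracting subtree $T_\rho$ (here, the level-$N$ partition coming from Proposition~\ref{prop:subtree}) to reduce to finitely many configurations, then prove uniform quasi-geodesic estimates outside that core. The execution diverges at the geometric step. You propose broken geodesics $\Pi_w$ indexed by Farey moves, with segment-length and turning control followed by a local-to-global principle. The paper instead uses \emph{separating hyperplanes}: for each $\e$-Christoffel basis $\f=(x,y)$ at level $\ge N$ it produces a totally geodesic plane $\h\subset\H^3$ such that for any $g,h\in\{x,y\}$ the three planes $\rho(g)^{-1}(\h),\ \h,\ \rho(h)(\h)$ are pairwise ultra-parallel with $\h$ separating the other two (Theorem~\ref{thm:main} and Figure~\ref{fig:hyperplane}). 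The orbit of any positive word in $\{x,y\}$ then visits a bi-infinite nested family of such planes, and the minimum distance between consecutive planes is a uniform positive constant; this yields the lower quasi-geodesic bound directly, at a single fixed level $N$ rather than at every Farey step, without invoking local-to-global.

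The piece of your sketch that is not yet an argument is the turning control (b). You say it is ``extracted from the gap $2+\eta_0$ and the Fibonacci growth by the standard dictionary'' relating traces to axis positions, but the paper isolates precisely the lemma needed here and it requires real work: Lemma~\ref{lem:angle} shows that for an \emph{acute} basis $\f$ the hexagon angle $\theta(\rho,\f)$ tends to $0$ as $\Lv_\e[\f]\to\infty$. The proof uses the amplitude identity~\eqref{eqn:amplitude}, whose modulus is a constant of $\rho$, together with the Fibonacci trace growth to force $\sinh\eta_Q\to0$; but one must then rule out the spurious accumulation point $\cosh\eta_Q\to-1$ (which would mean the axes of $X$ and $Y$ fold back rather than align), and this is where Proposition~\ref{prop:subtree} and the acuteness hypothesis enter via the inequality~\eqref{eqn:re}. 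Your ``standard dictionary'' would have to contain this dichotomy argument; once you have it, your broken-geodesic route should close. What the paper's hyperplane argument buys is that the nested-plane count replaces (a), (b), and the local-to-global appeal in one stroke. A small correction: by the consequence of Theorem~\ref{thm:bqpd} noted just after it, the set $\{\pm\tr\rho(x):[x]\in\V\}$ is discrete in $\C$, so traces near the core are \emph{not} arbitrarily close to $\pm2$; your concern there is unfounded, which only helps your estimate.
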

\noindent We prove this in Section~\ref{sec:pf}. 

\begin{remark}
(a) Lupi showed the same equality in his thesis \cite{Lup}*{Proposition 3.4 and Theorem 4.3} when the target group $G$ is restricted to $\PSL(2,\R)$. His idea of proof, however, does not seem to directly generalize to the case of $\PSL(2,\C)$. We discuss this issue in Section~\ref{sec:lupi}.

\noindent(b) One may view Theorem~\ref{I} in analogy with a theorem of Delzant, Guichard, Labourie and Mozes \cite{DGLM} that, for a word hyperbolic group acting on a metric space by isometries, an orbit map is a \emph{quasi-isometric embedding} if and only if the action is \emph{displacing}. In a sense, primitive stability (resp. the Q-conditions) is a weakening of the former (resp. the latter) property obtained by looking only at actions of primitive elements. We discuss this analogy in more detail in Section~\ref{sec:pspd}.
\end{remark}

Let us briefly present our ideas of proof. The definitions of primitive stability and the Q-conditions involve the conjugacy classes of primitive elements of $\F_2$. The so-called \emph{Christoffel words} are representatives of such classes and have many desirable properties for our purposes. Without giving the precise definition (Definition~\ref{def:chris}) for now, let us exhibit a simple and motivational example in order to illustrate how to generate the Christoffel words and how the relevant geometry behaves asymptotically under the representations of Theorem~\ref{I}.

\begin{figure}[ht]
\labellist
\pinlabel {$\H^2$} at 30 280
\pinlabel {$a$} at 108 154
\pinlabel {$b$} at 194 162
\pinlabel {\footnotesize $r$} at 222 214
\pinlabel {\footnotesize $p$} at 74 206
\pinlabel {\footnotesize $q$} at 152 110
\pinlabel {\footnotesize $rpq$} at 300 134
\pinlabel {\footnotesize $pqr$} at 146 279
\pinlabel {\footnotesize $qrp$} at 12 128
\pinlabel {\footnotesize $q(pqr)q$} at 156 8
\pinlabel {$a$} at 494 100
\pinlabel {$p$} at 460 160
\pinlabel {$q$} at 556 46
\pinlabel {\footnotesize $x$} at 690 60
\pinlabel {\footnotesize $q(x)$} at 394 60
\pinlabel {\footnotesize $pq(x)=a(x)$} at 560 240
\pinlabel {$L$} at 790 120
\pinlabel {$R$} at 950 90
\endlabellist
\centering
\includegraphics[width=\textwidth]{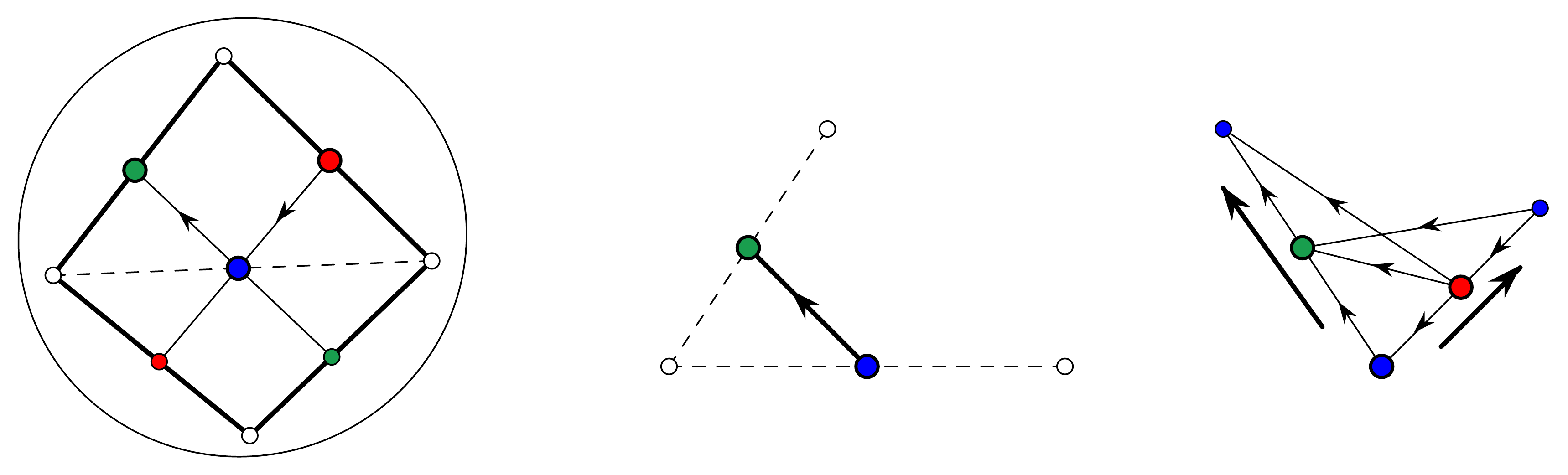}
\caption{A subgroup $\W(p,q,r)$ of $\Isom^+(\H^2)$, a hyperbolic translation $a=pq$, and two operations $L$ and $R$.}
\label{fig:torus}
\end{figure}

See Figure~\ref{fig:torus}. The first picture shows a subgroup $\W(p,q,r)$ of $\Isom^+(\H^2)$ generated by three orientation-preserving involutions $p$, $q$ and $r$, whose fixed points are specified in the Klein projective model of $\H^2$. If the product $pqr$ is of infinite order, the group $\W(p,q,r)$ is isomorphic to the free Coxeter group $\Z/2\ast\Z/2\ast\Z/2$ of rank three and contains the free group $\F(a,b)$, where $a=pq$ and $b=qr$, as an index two subgroup. Note that $ab^{-1}a^{-1}b=pqr(qq)pqr=(pqr)^2$. The element $pqr$ is an elliptic transformation in the picture; its fixed point and the fixed points of its three other conjugates form a quadrilateral. If $pqr$ is parabolic or hyperbolic, then the quadrilateral is ideal or hyper-ideal, respectively. In all possible cases, however, the elements $a=pq$ and $b=qr$ are always hyperbolic translations and they identify the opposite edges of the quadrilateral. The quotient surface is a hyperbolic torus either with a cone-type singularity or with a cusp or with a funnel, depending on the nature of $pqr$.

The second picture represents a hyperbolic translation $a=pq$ as a directed edge. The third picture shows two operations $L$ and $R$ (left and right slides) which inductively generate other basis triples of involutions in $\W(p,q,r)$ as well as other directed edges starting with the initial ones. The Christoffel words are the hyperbolic translations represented by the directed edges so generated. They are primitive elements of $\F(a,b)$. (For a more detailed account, see Section~\ref{sec:chris} as well as Appendix~\ref{sec:a}.)

\begin{figure}[ht]
\labellist
\pinlabel {$a$} at 410 54
\pinlabel {$b$} at 484 40
\pinlabel {$b$} at 534 90
\pinlabel {$ab$} at 450 70
\pinlabel {$ab$} at 346 96
\pinlabel {$ab^2$} at 478 100
\pinlabel {$ab^2$} at 630 124
\pinlabel {$abab^2$} at 470 170
\pinlabel {$abab^2$} at 180 110
\pinlabel {$ab(ab^2)^2$} at 550 170
\pinlabel {$abab^2ab(ab^2)^2$} at 360 150
\endlabellist
\centering
\includegraphics[width=\textwidth]{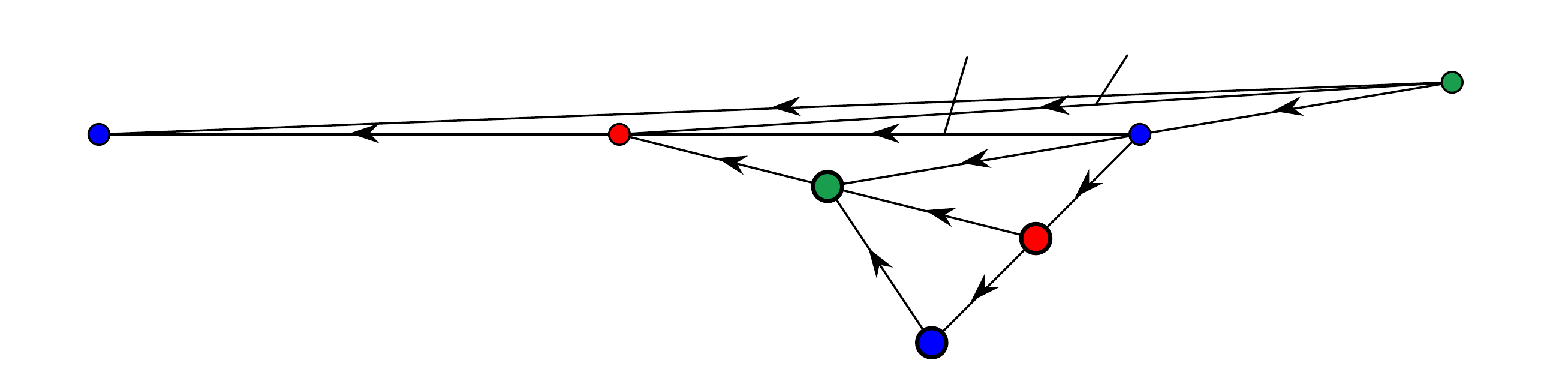}
\caption{Idea of proof in the case of $G=\PSL(2,\R)\cong\Isom^+(\H^2)$. }
\label{fig:torus2}
\end{figure}

Figure~\ref{fig:torus2} exhibits an example of the iteration $LRLR$ (read from right to left). Notice how quickly and uniformly the associated triangles degenerate with respect to the number of steps of $LR$-slides. In fact, if the initial triangle is acute then other triangles generated by the $LR$-slides are all obtuse. This asymptotic behavior of triangles immediately imply that the representation $\F(a,b)$ in $\Isom^+(\H^2)$ satisfies the Q-conditions and, after a little more work, primitive stability as well. We remark that this process may be viewed as being reverse to the classical \emph{trace minimizing algorithm} for checking discreteness of two-generator subgroups of $\PSL(2,\R)$. See, for example, \cite{Gil95}*{Chapter 2} and the references therein. 

In the general case of representations in $\PSL(2,\C)$, we exploit the fact (see Theorem~\ref{thm:coxeter}) that an irreducible representation of $\F(a,b)$ extends uniquely to the free Coxeter group $\W(p,q,r)$, thereby giving us a triple of complete geodesics in $\H^3$ which are axes of the involutions $p$, $q$ and $r$. For representations satisfying the Q-conditions (and hence irreducibility as well), we show that the triple of axes behaves analogously as in Figure~\ref{fig:torus2} under the $LR$-slides asymptotically. See Lemma~\ref{lem:angle} for a precise statement. Combined with the properties of Christoffel words, this enables us to show primitive stability.

\bigskip

In the second part of the paper (Section~\ref{sec:bi}) we investigate a condition which we call the \emph{bounded intersection property}. It is motivated by the work of Gilman and Keen \cite{GK09}, where they investigated palindromic elements of $\F_2$ in order to test the discreteness of irreducible representations $\rho:\F_2\to\PSL(2,\C)$. Slightly strengthening their idea we consider the primitive elements that are palindromic in one of the three chosen bases $(a,b)$, $(b,c)$, $(a,c)$ of $\F_2$, where $abc=1$. The bounded intersection property is a certain geometric condition on the $\rho$-images of these palindromic primitive elements. See Definition~\ref{def:bi} for details. This property also defines a subset $\BI\subset\X(\F_2)$, which we show in Proposition~\ref{prop:bi} to be invariant under the action of $\Out(\F_2)$.

We do not know if $\BI$ is an open subset nor if $\Out(\F_2)$ acts properly discontinuously on it. Nevertheless, we prove in Section~\ref{sec:pf2} the following.

\begin{introthm}\label{II}
If $\rho:\F_2\to\PSL(2,\C)$ satisfies the Q-conditions, then it has the bounded intersection property. That is, we have the inclusion $\BQ\subset\BI$. On the other hand, if $\rho$ is discrete and faithful, and has the bounded intersection property, then it satisfies the Q-conditions.
\end{introthm}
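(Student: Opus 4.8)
The statement is a conjunction of two implications, which I would prove separately. The first, $\BQ\subset\BI$, is the ``forward'' direction and rests on Theorem~\ref{I}; the second is a partial converse valid on the discrete faithful locus, proved by contraposition using the geometry of rank-two Kleinian groups. Throughout I would use that, by the Coxeter extension (Theorem~\ref{thm:coxeter}), an irreducible $\rho$ of $\F_2=\F(a,b)$ extends to the free Coxeter group $\W(p,q,r)$ with $a=pq$, $b=qr$, $c=rp$, so that $\rho(p),\rho(q),\rho(r)$ are genuine half-turns whose axes $\Axis(\rho(p)),\Axis(\rho(q)),\Axis(\rho(r))$ are the three fixed geodesics appearing in Definition~\ref{def:bi}; and that a word $w$ is a palindrome in the basis $(a,b)$ iff $\rho(q)$ inverts $\rho(w)$ (and similarly for $(b,c)$ with $\rho(r)$, for $(a,c)$ with $\rho(p)$), so that $\rho(w)$ is then a product of two half-turns, one of which is the corresponding fixed one. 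In particular, for a primitive palindrome $w$ in $(a,b)$ the element $\rho(w)$ is loxodromic (Q-conditions rule out primitive parabolics) and $\Axis(\rho(w))$ meets $\Axis(\rho(q))$ orthogonally in a single point, with the angle bounded away from $0$ using Lemma~\ref{lem:angle}; analogously for the other two bases.

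\textbf{Proof of $\BQ\subset\BI$.} Assume $\rho$ satisfies the Q-conditions. By Theorem~\ref{I}, $\rho$ is primitive stable, so the orbit map $\F_2\to\H^3$ sends the axis of every primitive element — in particular of every primitive palindrome, in any of the three bases — to a uniform $(\la,\ep)$-quasi-geodesic, with constants independent of the palindrome and of the basis. Fix such a palindrome $w$, say in $(a,b)$; then $\Axis(\rho(w))$ is within uniformly bounded Hausdorff distance $R(\la,\ep)$ of the corresponding quasi-geodesic by the Morse lemma, it crosses the fixed geodesic $\Axis(\rho(q))$ transversally with angle bounded below, and its two endpoints are distinct from those of $\Axis(\rho(q))$. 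A complete geodesic that crosses a fixed complete geodesic transversally with angle bounded below, and does not share an endpoint with it, fellow-travels that fixed geodesic for a uniformly bounded length; since the constants are uniform over all primitive palindromes, this is precisely the bound demanded by Definition~\ref{def:bi}. The only real work is bookkeeping: extracting uniform constants across the three bases and matching them to the exact formulation of Definition~\ref{def:bi} using the structure theory of palindromic primitive words recalled in Section~\ref{sec:bi}.

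\textbf{Proof of the converse.} Assume $\rho$ is discrete and faithful and has the bounded intersection property; I would show it satisfies the Q-conditions by contraposition. If not, then by Theorem~\ref{thm:bqpd} there is no linear lower bound for the translation length in terms of cyclically reduced length; as $\rho$ is faithful, no primitive element is trivial or elliptic, so there is an infinite sequence of pairwise non-conjugate primitive elements $w_n$ with $\ell(\rho(w_n))/|w_n|_{\mathrm{cyc}}\to0$ (allowing $\ell(\rho(w_n))=0$, i.e.\ $\rho(w_n)$ parabolic). First replace each $w_n$ by a palindromic representative: every primitive conjugacy class of $\F_2$ has a palindromic representative in at least one of the three bases $(a,b),(b,c),(a,c)$, and passing to it changes neither the translation length nor, up to a bounded factor, the cyclically reduced length; by pigeonhole we may assume all $w_n$ are palindromic in the same basis, say $(a,b)$. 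Now split into cases. If $\liminf_n\ell(\rho(w_n))=0$, then $\Axis(\rho(qw_n))$ has distance $\tfrac12\ell(\rho(w_n))\to0$ from the fixed geodesic $\Axis(\rho(q))$, so the $\Axis(\rho(w_n))$ (which meet $\Axis(\rho(q))$ orthogonally) fellow-travel $\Axis(\rho(q))$ for a length tending to infinity, contradicting the bounded intersection property; if $\rho(w_n)$ is parabolic for some $n$, the same contradiction (or, directly, the accumulation of translates of $\Axis(\rho(q))$ at the parabolic fixed point) applies. The remaining case is $\ell(\rho(w_n))\ge 2\de>0$ for all $n$ while $|w_n|_{\mathrm{cyc}}\to\infty$ and $\ell(\rho(w_n))=o(|w_n|_{\mathrm{cyc}})$; here I would invoke discreteness and faithfulness to rule this out, using tameness and the classification of rank-two Kleinian groups (convex-cocompact $\Rightarrow$ linear growth, versus geometrically infinite or accidentally parabolic $\Rightarrow$ arbitrarily short primitives pinched toward the end invariants, which falls back into the previous case). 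This last step is the main obstacle: making precise the dichotomy ``either some primitive elements are arbitrarily short, or translation length grows linearly'' for discrete faithful $\F_2$-representations, and in particular using discreteness in an essential way — this is exactly the hypothesis that fails in the non-discrete setting and the reason the converse is stated only there.
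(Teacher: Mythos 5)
Your write-up rests on a misreading of Definition~\ref{def:bi}, and this creates a genuine gap in both halves. The bounded intersection property does not ask that each axis $\Axis_{\rho(w)}$ ``overlap'' or fellow-travel the fixed geodesic for a bounded length (that is automatic for any orthogonal crossing); it asks that the \emph{set of intersection points} $J_R=\{\Axis_R\cap\Axis_{\rho(w)}\}$, as $w$ ranges over all primitive $(a,b)$-palindromes, be a bounded subset of $\H^3$. Your forward argument controls the angle and the local geometry of each individual crossing but says nothing about \emph{where along} the fixed geodesic the crossings occur, so the feet could still march off to infinity. The missing idea --- which is the heart of the paper's proof --- is that a primitive palindrome in a given basis is cyclically reduced in that basis, so its discrete axis in $(\F_2,d_{(x,y)})$ passes through the identity; by primitive stability (Theorem~\ref{I}) and the Morse lemma, $\Axis_{\rho(w)}$ therefore passes within a uniform distance $D$ of the single basepoint $o$. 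Since all these axes are orthogonal to the common perpendicular $\Axis_U$, the distance between any two of them is realized on $\Axis_U$, and comparing nearest-point projections of $o$ gives $\mathrm{diam}\,J_U\le 4D$. Without anchoring every palindromic axis near one common point, the conclusion does not follow.

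The same misreading derails your converse: an isometry with tiny translation length whose axis meets $\Axis_Q$ orthogonally does not by itself violate boundedness of $J_Q$ (and a geodesic orthogonal to $\Axis_Q$ cannot fellow-travel it for a long stretch in any case), so your case (1) proves nothing; and your final case, which you defer to tameness and the classification of rank-two Kleinian groups, is both incomplete and unnecessary. The paper's argument is short: if the second Q-condition fails there are infinitely many primitive classes with $|\tr\rho(w)|\le 2$; by Lemma~\ref{lem:pal2} and pigeonhole, infinitely many of these have palindromic representatives in one common basis; by \eqref{eqn:ltr} their translation lengths are uniformly bounded, their axes are all orthogonal to the corresponding fixed geodesic, and by the bounded intersection property those axes all pass through one compact set. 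Hence all these (pairwise distinct, by faithfulness) image elements displace a fixed point $o$ on that geodesic by a uniformly bounded amount, so infinitely many distinct elements of $\rho(\F_2)$ lie in a compact subset of $\PSL(2,\C)$, contradicting discreteness. No end-invariant or geometric-finiteness analysis enters; discreteness is used only through this accumulation argument.
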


We were kindly informed by Caroline Series that she also proved Theorem~\ref{I} as well as a result similar to Theorem~\ref{II}. See \cite{Series}.

\bigskip

The rest of the paper is organized as follows. In Section~\ref{sec:2} we recall some classical results about automorphisms of $\F_2$. We introduce the Christoffel words in a basis of $\F_2$ and discuss their properties. In Section~\ref{sec:3} we mostly follow Fenchel \cite{Fen} and discuss some basic geometry of $\H^3$ with an emphasis on half-turns and Coxeter extensions. We also introduce the amplitude of a right-angled hexagon. In Section~\ref{sec:4} we recall the definition of primitive stability and that of the Q-conditions, and prove that a primitive stable representation satisfies the Q-conditions. In Section~\ref{sec:pf} we give a proof of Theorem~\ref{I}. In Section~\ref{sec:bi} we discuss palindromic elements of $\F_2$, introduce the bounded intersection property and prove Theorem~\ref{II}. In Appendix~\ref{sec:a} we discuss automorphisms of $\F_2$ in view of its Coxeter extension.

\paragraph{Acknowledgments.}
The present work was initiated during our respective collaboration with Ser Peow Tan. We are deeply thankful to him for generously bridging us together and encouraging our collaboration. We would like to thank Misha Kapovich and Xin Nie for numerous helpful discussions, and Caroline Series for her interest and encouragement. We also thank the referee for many helpful suggestions that clarified our earlier argument.

J. Lee would like to thank Inkang Kim for his support and encouragement; he was supported by the grant NRF-2014R1A2A2A01005574 and NRF-2017R1A2A2A05001002. B. Xu thanks Korea Institute for Advanced Study, where he finished the major part of the collaboration as a postdoc; he was supported by the grant DynGeo FNR INTER/ANR/15/11211745.

\section{The free group of rank two and Christoffel words}\label{sec:2}

We recall classical facts about the (outer) automorphism group of the free group $\F_2$ of rank two. One often studies the group $\Out(\F_2)$ via its identification with the mapping class group of the one-holed torus. Since the representations of our interest are not (yet) known to be coming from a geometric structure on the surface, we do not discuss this identification here but rather proceed purely algebraically.

The eventual goal is to introduce Christoffel bases, which are explicit representatives of conjugacy classes of bases of $\F_2$. Their nice properties, especially Lemma~\ref{lem:chris}, will be essential in our proof of Theorem~\ref{I}.

\subsection{The automorphism group of \texorpdfstring{$\F_2$}{F2}}

The following results of Nielsen are all from his seminal paper \cite{Niel17}.

Let $\F_2$ denote the free group on two generators. A \emph{basis} of $\F_2$ is an ordered pair of free generators. An element of $\F_2$ is called \emph{primitive} if it is a member of a basis. The abelianization $\F_2/[\F_2,\F_2]$ of $\F_2$ is isomorphic to the free abelian group $\Z^2$ of rank two. Bases and primitive elements of $\Z^2$ are defined analogously.

To be specific, let us fix one basis $(a,b)$ of $\F_2$ once and for all, and denote it by
\[
\e=(a,b).
\]
In order to emphasize this we shall often write
\[
\F_2=\F(\e)=\F(a,b)
\]
interchangeably. The abelianization homomorphism
\begin{align}\label{eqn:abel}
\pi_\e:\F_2\to\Z^2
\end{align}
is then given by $\pi_\e(a)=(1,0)$ and $\pi_\e(b)=(0,1)$. 

Nielsen showed the following.
\begin{theorem}[Nielsen]\label{thm:Nielsen}
Let $x$ and $y$ be elements of $\F(a,b)$.
\begin{enumerate}[label=\textup{(\alph*)},nosep,leftmargin=*]
\item If the pair $(x,y)$ is a basis of $\F(a,b)$ then the $2\times2$ matrix with columns $\pi_\e(x)$ and $\pi_\e(y)$ belongs to $\GL(2,\Z)$, that is, has determinant $\pm1$. Conversely, any matrix in $\GL(2,\Z)$ determines exactly one conjugacy class of a basis of $\F(a,b)$.
\item The pair $(x,y)$ is a basis of $\F(a,b)$ if and only if the commutator $xyx^{-1}y^{-1}$ is conjugate either to the commutator $aba^{-1}b^{-1}$ or to its inverse $bab^{-1}a^{-1}$.
\end{enumerate}
\end{theorem}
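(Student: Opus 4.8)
The plan is to reduce both parts to two structural facts about the homomorphism $\Phi\colon\Aut(\F_2)\to\GL(2,\Z)$ induced by the abelianization \eqref{eqn:abel}: (i) $\Phi$ is surjective, and (ii) $\ker\Phi=\Inn(\F_2)$. Fact (i) comes from lifting the standard generators of $\GL(2,\Z)$ (a coordinate swap, a sign change, an elementary transvection) to Nielsen automorphisms of $\F_2$; fact (ii) is the substantive Nielsen input, which I would simply quote. The only observation to keep in mind is that $\Phi(\phi)$ is exactly the matrix with columns $\pi_\e(\phi(a))$ and $\pi_\e(\phi(b))$. Part (a) is then formal: if $(x,y)$ is a basis, the automorphism $\phi$ with $\phi(a)=x$, $\phi(b)=y$ has $\Phi(\phi)\in\GL(2,\Z)$ equal to the asserted matrix; conversely, for $M\in\GL(2,\Z)$ pick $\phi$ with $\Phi(\phi)=M$ using (i), so that $(\phi(a),\phi(b))$ is a basis realizing $M$, and if $\phi'$ is another such automorphism then $\phi'\phi^{-1}\in\ker\Phi=\Inn(\F_2)$ by (ii), so the two bases differ by a simultaneous conjugation. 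Since conjugate bases visibly induce the same matrix (conjugation acts trivially on $\Z^2$), $M$ corresponds to exactly one conjugacy class of bases.

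For the forward direction of (b), I would check a single invariant on a generating set of $\Aut(\F_2)$. Call $\phi$ \emph{admissible} if $\phi([a,b])$ is conjugate in $\F_2$ to $[a,b]^{\det\Phi(\phi)}$, where $[a,b]^{-1}:=[b,a]$. A direct computation verifies admissibility of the Nielsen generators: the transvection $a\mapsto ab$, $b\mapsto b$ fixes $[a,b]$; the swap $a\mapsto b$, $b\mapsto a$ sends it to $[b,a]$; the sign change $a\mapsto a^{-1}$, $b\mapsto b$ sends it to a conjugate of $[b,a]$; and inner automorphisms conjugate it. Admissibility is preserved under composition, because any automorphism carries the class of $[a,b]^{\pm1}$ to a class of $[a,b]^{\pm1}$ and the exponent multiplies exactly as $\det\circ\Phi$ does. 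Hence every $\phi$ is admissible, and taking $\phi$ with $\phi(a)=x$, $\phi(b)=y$ gives the claim (with the precise sign as a bonus).

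The backward direction of (b) is where the real work lies, and I would approach it by Nielsen reduction. Suppose $w:=xyx^{-1}y^{-1}$ is conjugate to $[a,b]^{\pm1}$; in particular $w\ne1$. Each elementary Nielsen move on the pair $(x,y)$ — replacing one entry by its product with a power of the other, swapping, or inverting — alters $[x,y]$ only by a conjugation and/or inversion, as a short calculation shows. Reducing $(x,y)$ to a Nielsen-reduced pair $(x',y')$, we still have $[x',y']$ conjugate to $[a,b]^{\pm1}$, hence of cyclically reduced length $4$ in the basis $\e$. Now $(x',y')$ is a free basis of $H:=\langle x,y\rangle$, and, since the syllable sequence $x'\,y'\,x'^{-1}\,y'^{-1}$ is reduced as written, the ``more than half survives'' estimate for Nielsen-reduced sets bounds $|x'|$ and $|y'|$ in terms of $|[x',y']|=4$; a short case check among Nielsen-reduced pairs of such bounded length then shows that only those with $|x'|=|y'|=1$ produce a commutator of cyclically reduced length exactly $4$. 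In that case $x',y'\in\{a^{\pm1},b^{\pm1}\}$ and, being distinct and non-inverse, already form a basis of $\F_2$; pulling the Nielsen moves back (they are restrictions of automorphisms of $\F_2$) shows $(x,y)$ is a basis. The delicate points — which I expect to be the main obstacle — are the precise length estimate for Nielsen-reduced pairs and the finite verification that no Nielsen-reduced pair with $|x'|+|y'|\ge3$ has a commutator of length $4$; alternatively one can bypass them by invoking Nielsen's original argument, or by an Euler-characteristic count on the Stallings core graph of $H$ (a proper rank-$2$ subgroup of $\F_2$ has infinite index, so its core graph has a valence-deficient vertex, forcing the boundary word $[x,y]$ to have length $>4$).
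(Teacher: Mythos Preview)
The paper does not prove this theorem: it attributes both parts to Nielsen's 1917 paper and only briefly discusses (a), by noting that the elementary Nielsen transformations surject onto $\GL(2,\Z)$ and citing (with reference to \cite{LS77}) the fact that the kernel of $\Aut(\F_2)\to\GL(2,\Z)$ is $\Inn(\F_2)$. Your treatment of (a) is exactly this argument. Your forward direction of (b), which the paper does not discuss, is correct; only the phrasing of the closure-under-composition step is slightly circular---what you actually use is that an automorphism carries conjugates to conjugates and that $\det\circ\Phi$ is multiplicative, not that ``any automorphism carries the class of $[a,b]^{\pm1}$ to a class of $[a,b]^{\pm1}$'', which is the conclusion.

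For the backward direction of (b) the paper offers nothing to compare with. Your Nielsen-reduction outline is the standard route and correct in principle, but the step you flag as delicate is the entire content: one must show that for a Nielsen-reduced pair $(x',y')$ the cyclically reduced length of $[x',y']$ exceeds $4$ unless $|x'|=|y'|=1$, and this is a genuine (if short) argument rather than an afterthought. Your Stallings-graph alternative, as stated, is too compressed to be a proof: the presence of a valence-deficient vertex in the core graph does not by itself force any particular element to be long (short cyclically reduced elements of $\F_2$ can and do lie in proper rank-$2$ subgroups). What is needed is to identify the conjugacy class of $[x,y]$---which by the forward direction depends only on $H=\langle x,y\rangle$, not on the chosen basis---with a specific ``boundary'' loop in the core graph and then argue that this loop has length $>4$ whenever the graph is not the rose; that step is missing. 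Neither gap is fatal, but as written the backward direction of (b) remains a sketch with its key lemma still to be supplied.
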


Let us briefly discuss only the item (a) of the theorem. Note that a basis $(x,y)$ of $\F_2$ corresponds to a unique automorphism $f\in\Aut(\F_2)$ defined by $f(a)=x$ and $f(b)=y$. Nielsen found a set of generators of $\Aut(\F_2)$: they are the so-called elementary Nielsen transformations and correspond to the bases
\begin{align}\label{eqn:elementary}
(b,a),\; (a^{-1},b),\; (a,b^{-1}),\; (a,ab),\; (ab,b),
\end{align}
respectively. Since an analogous result for $\Aut(\Z^2)\cong\GL(2,\Z)$ is well-known, we see that the abelianization homomorphism $\pi_\e$ induces a surjective homomorphism
\[
\Aut(\F_2)\twoheadrightarrow\Aut(\Z^2).
\]
Nielsen also showed that the kernel of this map is exactly the group $\Inn(\F_2)$ of all inner automorphisms of $\F_2$. See also \cite{LS77}*{Proposition I.4.5}. Hence we obtain an isomorphism
\[
\Out(\F_2)=\Aut(\F_2)/\Inn(\F_2)\cong\Aut(\Z^2)\cong\GL(2,\Z).
\]
Therefore, the conjugacy classes of bases of $\F_2$ are in one-to-one correspondence with the bases of $\Z^2$. This implies, in turn, that the conjugacy classes of primitive elements of $\F_2$ are in one-to-one correspondence with the primitive elements of $\Z^2$. See also \cite{OZ81}*{Corollary 3.2}.

\subsection{Unoriented primitive classes and the Farey triangulation}\label{sec:farey}

Both Bowditch's Q-conditions and Minsky's primitive stability are concerned with conjugacy classes of primitive elements of $\F_2$. As we shall see later in Section~\ref{sec:4}, these conditions also make sense for a slightly bigger classes. To be more precise, we consider the equivalence relation $\sim$ on $\F_2$, where
\begin{align*}
a'\sim a\quad\Longleftrightarrow\quad a'\text{ is conjugate either to  }a\text{ or to }a^{-1}.
\end{align*}
Let us denote by $\Prim$ the set of all primitive elements of $\F_2$. For a primitive element $x\in\Prim$ we call its $\sim$-equivalence class $[x]\in\Prim/_\sim\subset\F_2/_\sim$ the \emph{unoriented primitive class} of $x$.

In view of the above one-to-one correspondence between primitive conjugacy classes of $\F_2$ and primitive elements of $\Z^2$, we see that the unoriented primitive classes of $\F_2$ correspond to the projectivized primitive elements of $\Z^2$
\begin{align}\label{eqn:prim}
\Prim/_\sim\;\longleftrightarrow\;\p^1(\Q)=\Q\cup\{\infty\}
\end{align}
and the action of $\Aut(\F_2)$ on the unoriented primitive classes factors through $\PGL(2,\Z)=\GL(2,\Z)/\{\pm\id\}$:
\[
\Aut(\F_2)\twoheadrightarrow\PGL(2,\Z)\curvearrowright\p^1(\Q).
\]
Note that the kernel of this homomorphism is $\Inn^\mathfrak{e}(\F_2):=\Inn(\F_2)\rtimes\langle\mathfrak{e}\rangle$, where $\mathfrak{e}$ is an involutory automorphism of $\F_2=\F(a,b)$ defined by $\mathfrak{e}(a,b)=(a^{-1},b^{-1})$.

\begin{convention}\label{conv:slope}
When we write an element $p/q$ of $\Q$ for coprime integers $p$ and $q$, we shall always assume $q>0$. We formally write either $1/0$ or $(-1)/0$ for the infinity $\infty$.
\end{convention}

\begin{figure}[ht]
\labellist
\pinlabel {$0/1$} at 304 155
\pinlabel {$1/0=$} at 0 160
\pinlabel {$-1/0$} at 0 140
\pinlabel {$1/1$} at 156 292 
\pinlabel {$-1/1$} at 150 16 
\pinlabel {\footnotesize $1/2$} at 246 262 
\pinlabel {\footnotesize$-1/2$} at 240 42 
\pinlabel {\footnotesize$2/1$} at 68 262
\pinlabel {\footnotesize$-2/1$} at 68 42
\pinlabel {\scriptsize $1/3$} at 270 234
\pinlabel {\scriptsize $3/1$} at 44 234
\pinlabel {\scriptsize $-1/3$} at 276 76
\pinlabel {\scriptsize $-3/1$} at 38 76
\pinlabel {\scriptsize $2/3$} at 210 280
\pinlabel {\scriptsize $3/2$} at 102 280
\pinlabel {\scriptsize $-2/3$} at 206 28
\pinlabel {\scriptsize $-3/2$} at 102 28

\pinlabel {\huge $0/1$} at 550 155
\pinlabel {\huge $1/0=$} at 420 170
\pinlabel {\huge $-1/0$} at 420 134
\pinlabel {\Large $1/1$} at 482 240 
\pinlabel {\Large $-1/1$} at 476 70
\pinlabel {$1/2$} at 545 232 
\pinlabel {$-1/2$} at 540 76 
\pinlabel {$2/1$} at 420 232
\pinlabel {$-2/1$} at 418 76
\endlabellist
\centering
\includegraphics[width=\textwidth]{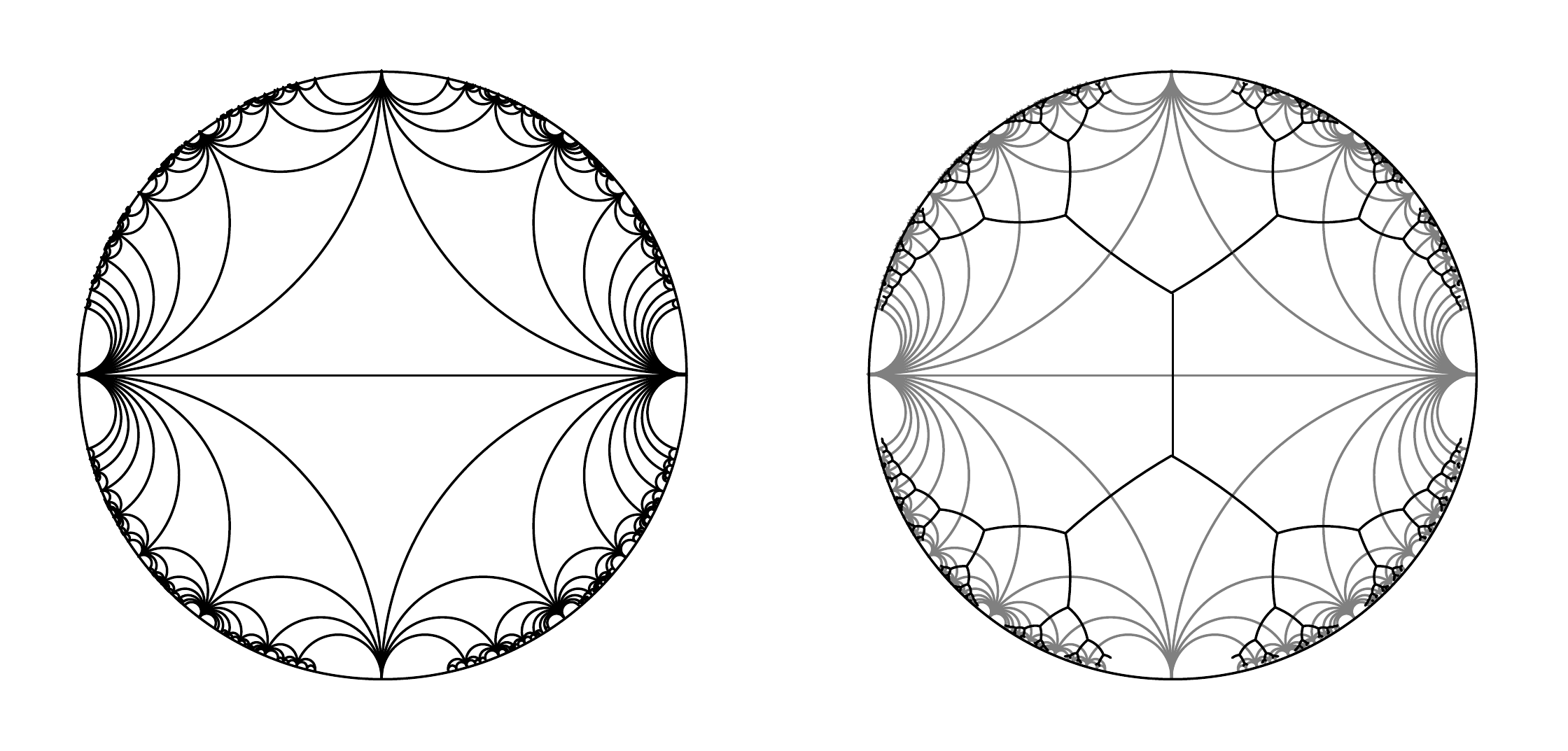}
\caption{(Left) The Farey triangulation $\Fa$. (Right) Its dual tree with rational numbers written inside the complementary regions.}
\label{fig:Farey}
\end{figure}

As the action of $\Aut(\F_2)$ on $\Prim/_\sim$ factors through $\PGL(2,\Z)$, we may visualize this action via the classical Farey triangulation. Let us take the upper half-plane of the complement $\p^1(\C)\setminus\p^1(\R)$ as a conformal model of the hyperbolic plane $\H^2$. We denote $\overline{\H^2}=\H^2\cup\partial_\infty\H^2$, where the ideal boundary $\partial_\infty\H^2$ is identified with $\p^1(\R)$. The \emph{Farey triangulation} $\Fa$ is a $2$-dimensional simplicial complex realized as a subset
\[
\Fa=\H^2\cup\p^1(\Q)\subset\overline{\H^2}.
\]
By definition, the set of $0$-simplices are realized as $\p^1(\Q)$. We denote
\[
\V=\p^1(\Q)
\]
and call its elements the Farey vertices. The $1$-simplices, called the Farey edges, are realized as complete geodesics connecting two Farey vertices $p/q$ and $r/s$ such that
\[
\det \begin{pmatrix}p&r\\q&s\end{pmatrix}=\pm1.
\]
The $2$-simplices, called the Farey triangles, are realized as ideal triangles bounded by three Farey edges. See Figure~\ref{fig:Farey}. For more about the Farey triangulation see, for example, \cite{Hat}*{Chapter 1}.

Abusing notations we shall often identify the vertex set $\V=\p^1(\Q)$ with the set $\Prim/_\sim$ of unoriented primitive classes via \eqref{eqn:prim}
\[
\V=\Prim/_\sim.
\]
and say a Farey vertex $[x]\in\V$. The conjugacy class of a basis $(x,y)$ of $\F_2$ determines a \emph{directed} Farey edge connecting vertices from $[x]$ to $[y]$, which we simply denote by
\[
[x,y]
\]
instead of $([x],[y])$. Note that three other conjugacy classes of bases $(x,y^{-1})$, $(x^{-1},y)$ and $(x^{-1},y^{-1})$ also determine the same directed edge $[x,y]$. Compare Theorem~\ref{thm:Nielsen}(a). By convention, however, whenever we write $[x,y]$ it will be assumed that the specific representative basis $(x,y)$ is chosen. The directed edge corresponding to the distinguished basis $\e=(a,b)$ will often be denoted by $[\e]=[a,b]$; it runs from $\infty=1/0$ to $0=0/1$.

We identified the Farey triangulation $\Fa$ as a subset of the disk $\overline{\H^2}$. In most instances, however, we are interested only in its combinatorial structure and draw the pictures accordingly: for example, see Figure~\ref{fig:level}.

\begin{remark}\label{rem:topograph}
The dual object of the Farey triangulation is the complete binary tree. If it is properly embedded in the plane, it is called the \emph{topograph} (see \cite{Con} and also \cite{Hat}). 
The combinatorial structure of topograph is more convenient in Bowditch's theory, where one investigates flows on the topograph. See Bowditch \cite{Bow} as well as \cite{TWZ08} and \cite{GMST}. Although we do not use the topograph directly, its tree structure underlies the inductive construction that we explain below using galleries and levels.
\end{remark}

\subsection{Levels and Christoffel words}\label{sec:chris}

Recall that we fixed a basis $\e=(a,b)$ of $\F_2$ and that the distinguished directed Farey edge is denoted by $[\e]=[a,b]$. This naturally leads us to consider the $2$-fold reflectional symmetry of the Farey triangulation with respect to $[\e]$ among other symmetries in $\PGL(2,\Z)$.

From the embedding $\Fa\subset\overline{\H^2}$ of the Farey triangulation, we can talk about the sides of directed Farey edges. The set of Farey vertices lying on the left-hand (resp. right-hand) side of $[x,y]$ will be denoted by $\V^+[x,y]$ (resp. $\V^-[x,y]$).

A single directed Farey edge $[x,y]$ \emph{induces} directions on other edges as follows. The circle $\partial\H^2$ is the union of two semi-circles connecting $[x]$ and $[y]$, both of which are endowed with the linear order running from $[x]$ to $[y]$. We then direct other Farey edges according to this linear order; we say they are \emph{co-directed} with $[x,y]$. For example, Figure~\ref{fig:level}(Left) shows the directions induced by $[\e]$. This will be used later for the inductive procedure in Definitions~\ref{def:chris} and \ref{def:pal}.

\begin{figure}[ht]
\labellist
\pinlabel {$[\e]$} at 246 144
\pinlabel {$[\e]$} at 796 144
	
\pinlabel {\small \text{level} $0$} at 860 112
\pinlabel \rotatebox{12}{\small \text{level} $1$} at 740 154
\pinlabel \rotatebox{-12}{\small \text{level} $1$} at 740 92
\pinlabel \rotatebox{39}{\small \text{level} $2$} at 650 190
\pinlabel \rotatebox{-39}{\small \text{level} $2$} at 650 60
\pinlabel \rotatebox{66}{\small \text{level} $3$} at 616 230
\endlabellist
\centering
\includegraphics[width=\textwidth]{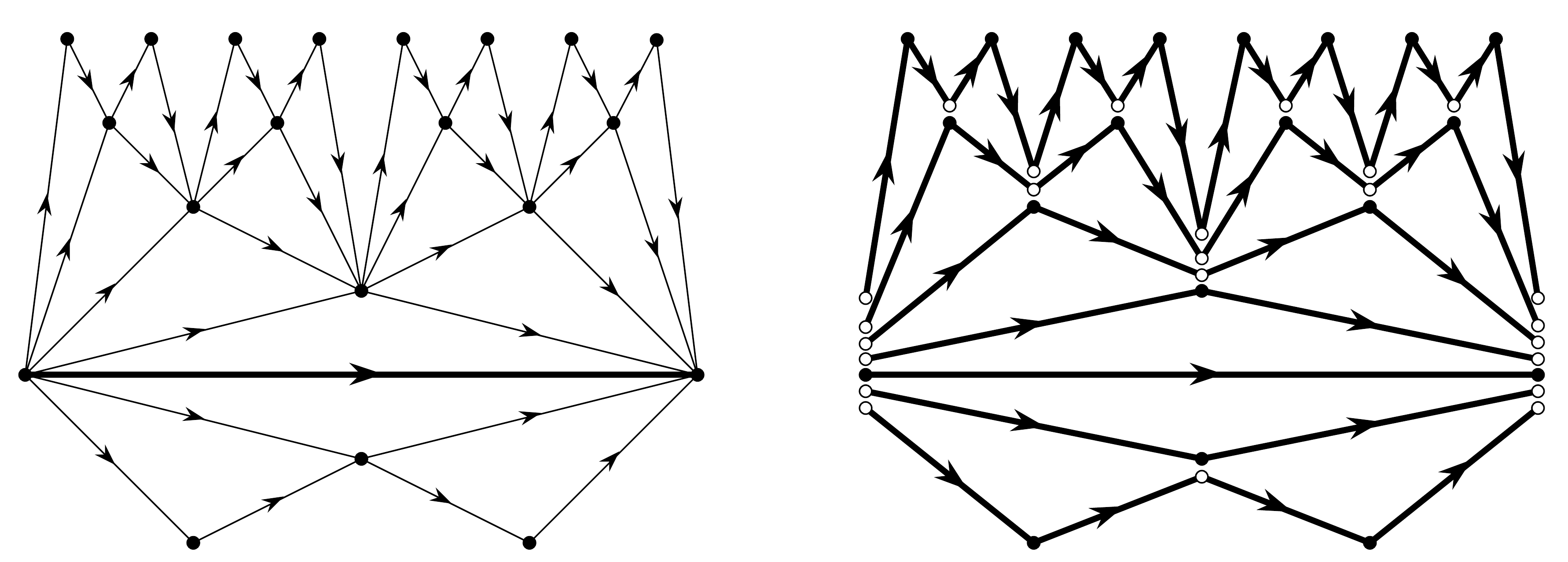}
\caption{(Left) Farey edges with directions induced from the edge $[\e]$. (Right) Levels with respect to the edge $[\e]$.}
\label{fig:level}
\end{figure}

We now introduce the notion of levels relative to the distinguished edge $[\e]$. A \emph{gallery} is the union $T_1\cup T_2\cup\cdots\cup T_k$ of Farey triangles such that the triangles $T_i$ and $T_{i+1}$ are adjacent along an edge for $1\le i\le k-1$. The \emph{length} of a gallery is the number of triangles it contains.

\begin{definition}
(See Figure~\ref{fig:level}(Right).) Let $X$ denote either a vertex distinct from $[a]$ or $[b]$, or an edge distinct from $[\e]$, or a triangle. Then there exists a unique gallery of shortest length containing both $X$ and $[\e]$. The \emph{$\e$-level} $\Lv_\e(X)$ of $X$ is defined to be the length of this gallery. The edge $[\e]$ itself as well as its end points $[a]$ and $[b]$ are defined to be of \emph{$\e$-level} $0$. There is no triangle of $\e$-level $0$.
\end{definition}

\noindent Most of the time the reference edge $[\e]$ is understood and we shall simply say \emph{levels} instead of $\e$-levels. While we defined the level combinatorially, it can also be defined using the geometry of $\H^2$ as in \cite{GK11}*{Definition 3}.

A vertex of level $k>0$ is contained in a unique triangle of level $k$. A triangle of level $k>0$ contains two edges of level $k$ and one edge of level $k-1$; if $k>1$ its vertices are of level $k$, $k-1$ and $j$ (with $j\le k-2$), respectively.

A directed edge $[x,y]$ of level $k>0$ contains vertices of level $k$ and $j$ (with $j\le k-1$), and is contained in two triangles of level $k$ and of level $k+1$, which will be denoted, respectively, by
\[
[w;x,y]\quad\textup{and}\quad[x,y;z],
\]
where $\Lv_\e[w]\le k-1$ and $\Lv_\e[z]=k+1$. The union of these triangles will be denoted by
\[
[w;x,y;z]
\]
and called the \emph{quadrilateral} determined by $[x,y]$.

\bigskip

Following Bowditch \cite{Bow}*{p.704, p.706} we use the notion of levels in order to define functions on $\V^\pm[a,b]$ inductively. Let $S$ be a set and $B:S\times S\to S$ a binary operation. Given an $S$-valued function $f$ defined on the vertices $[a]$ and $[b]$ of level $0$, we can inductively extend it to a map $f:\V^+[a,b]\to S$ as follows: a vertex $[z]\in\V^+[a,b]$ of level $k+1$ $(k\ge0)$ is contained in a unique triangle $[x,y;z]$ of level $k+1$ such that $[x,y]$ is co-directed with $[a,b]$. Then the levels of $[x]$ and $[y]$ are at most $k$. Assuming, inductively, the values $f[x]$ and $f[y]$ have already been defined, we define $f[z]=B(f[x],f[y])$. Similarly, we extend $f$ to $\V^-[a,b]$ starting with (possibly new) initial values of $f$ on $[a]$ and $[b]$.

Note that the level function $\Lv_\e$ itself can be defined in this way by setting $S=\Z$ and $B(p,q)=\max\{p,q\}+1$ starting with $\Lv_\e[a]=\Lv_\e[b]=0$. More interesting examples are the Fibonacci function and the Christoffel function to be defined as follows. See Figure~\ref{fig:fibchris}.

\begin{figure}[ht]
\labellist
\pinlabel {$1$} at 2 120
\pinlabel {$1$} at 440 120
\pinlabel {$2$} at 240 176
\pinlabel {$2$} at 240 70
\pinlabel {$3$} at 104 210
\pinlabel {$3$} at 104 30
\pinlabel {$3$} at 336 210
\pinlabel {$3$} at 336 30
\pinlabel {$4$} at 56 260
\pinlabel {$5$} at 156 260
\pinlabel {$5$} at 284 260
\pinlabel {$4$} at 380 260
\pinlabel {$5$} at 44 344
\pinlabel {$7$} at 96 344
\pinlabel {$8$} at 146 344
\pinlabel {$7$} at 196 344
\pinlabel {$7$} at 246 344
\pinlabel {$8$} at 296 344
\pinlabel {$7$} at 346 344
\pinlabel {$5$} at 396 344

\pinlabel {$b$} at 1126 124
\pinlabel {$a$} at 494 140
\pinlabel {$a^{-1}$} at 494 94
\pinlabel {$ab$} at 836 166
\pinlabel {$a^{-1}b$} at 844 84
\pinlabel {$a^2b$} at 630 210
\pinlabel {$ab^2$} at 980 210
\pinlabel {$a^{-2}b$} at 630 40
\pinlabel {$a^{-1}b^2$} at 986 40
\pinlabel {$a^3b$} at 556 260
\pinlabel {$a^2bab$} at 726 256
\pinlabel {$abab^2$} at 886 256
\pinlabel {$ab^3$} at 1050 260
\pinlabel {$a^4b$} at 516 314
\pinlabel \rotatebox{50}{\scriptsize $a^3ba^2b$} at 610 340
\pinlabel \rotatebox{50}{\scriptsize $(a^2b)^2ab$} at 688 340
\pinlabel \rotatebox{50}{\scriptsize $a^2b(ab)^2$} at 766 340
\pinlabel \rotatebox{50}{\scriptsize $(ab)^2ab^2$} at 842 340
\pinlabel \rotatebox{50}{\scriptsize $ab(ab^2)^2$} at 916 340
\pinlabel \rotatebox{50}{\scriptsize $ab^2ab^3$} at 990 340
\pinlabel {$ab^4$} at 1090 310
\endlabellist
\centering
\includegraphics[width=1\textwidth]{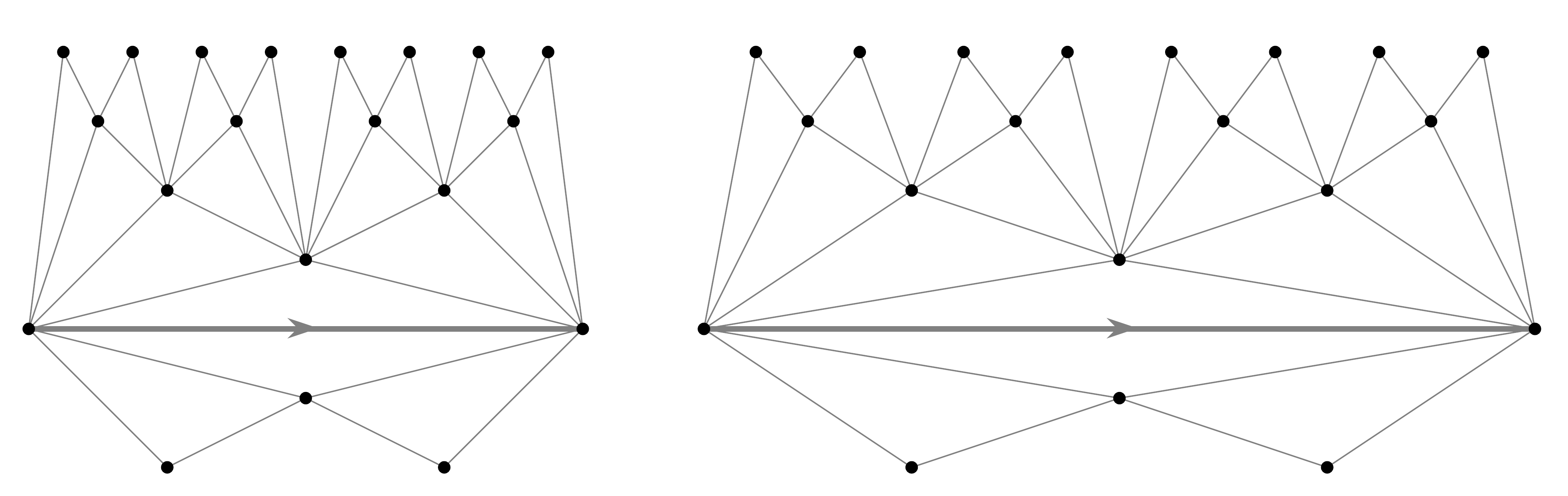}
\caption{(Left) The Fibonacci function. (Right) The Christoffel function.}
\label{fig:fibchris}
\end{figure}

\begin{definition}\label{def:chris}
Let $S$ be a set and $B:S\times S\to S$ a binary operation.
\begin{enumerate}[label=\textup{(\alph*)},nosep,leftmargin=*]
\item Let $S=\N$ and $B(p,q)=p+q$. The \emph{$\e$-Fibonacci function} $\Fi_\e$ is defined inductively on $\V$ with initial values $\Fi_\e[a]=\Fi_\e[b]=1$. 
\item Let $S=\F(a,b)$ and $B(w_1,w_2)=w_1w_2$, the concatenation of $w_1$ and $w_2$. The \emph{$\e$-Christoffel function} $\Ch_\e$ is defined inductively on $\V^+[a,b]$ with initial values $\Ch_\e[a]=a$ and $\Ch_\e[b]=b$, and on $\V^-[a,b]$ with initial values $\Ch_\e[a]=a^{-1}$ and $\Ch_\e[b]=b$. The values of $\Ch_\e$ are called \emph{$\e$-Christoffel words} (or \emph{Christoffel words} in $\{a,b\}$).
\end{enumerate}
\end{definition}

\noindent Strictly speaking, the value $\Ch_\e[a]$ is indeterminate and can be either $a$ or $a^{-1}$ depending on the context. But this will not cause any confusion later.

Before we delve into a few immediate properties of the Fibonacci and Christoffel functions, let us first adopt the following terminology: elements of the free semi-group generated by two symbols $x$ and $y$ are said to be \emph{positive} in $\{x,y\}$.

Let $[w]\in\V$ with $\Lv_\e[w]>0$. Since Christoffel words are obtained by concatenation, it is clear that $\Ch_\e[w]$ is either positive in $\{a,b\}$ (if $[w]\in\V^+[a,b]$) or positive in $\{a^{-1},b\}$ (if $[w]\in\V^-[a,b]$). This implies, in particular, that every $\e$-Christoffel word is \emph{cyclically reduced}, that is, its initial and terminal symbols are not inverse to each other. Observe also that if $[w]\in\V$ corresponds to $p/q\in\p^1(\Q)$ then $\pi_\e(\Ch_\e[w])=(p,q)$ (see \eqref{eqn:abel}, \eqref{eqn:prim} and Convention~\ref{conv:slope}). One can use the continued fraction expansion of $p/q$ to find the corresponding $\e$-Christoffel word. (See Appendix~\ref{sec:cf} for an illustration of this reverse procedure.) Therefore, $\Ch_\e[w]$ is a cyclically reduced representative of its unoriented primitive class $[w]$.

It is also apparent from the definitions that, for each vertex $[w]\in\V$, we have 
\begin{align}\label{eqn:fib1}
\Fi_\e[w]=|\,\Ch_\e[w]\,|_\e,
\end{align}
where $|\cdot|_\e$ denotes the word length with respect to the basis $\e=(a,b)$. See \cite{Bow}*{Lemma 2.2.1}. (As we shall see later in \eqref{eqn:le}, the value $\Fi_\e[w]$ is also equal to the translation length $\ell_\e(w)$ for $[w]\in\V$.) We also observe that
\begin{align}\label{eqn:fib2}
\Fi_\e[w]\ge \Lv_\e[w]+1
\end{align}
for all $[w]\in\V$.

Let us unravel the definition of $\e$-Christoffel words a little further. Let $[x,y;z]$ be a Farey triangle of level $k+1$ $(k\ge0)$ appearing in the inductive procedure and assume the edge $[x,y]$ of level $k$ is co-directed with $[\e]$. We then have $\Ch_\e[z]=\Ch_\e[x]\Ch_\e[y]$ by definition. If the pair $(\Ch_\e[x],\Ch_\e[y])$ is a basis of $\F_2$, then the pairs $(\Ch_\e[x],\Ch_\e[z])$ and $(\Ch_\e[z],\Ch_\e[y])$ are obtained by elementary Nielsen transformations (the last two in \eqref{eqn:elementary}) and thus are bases as well. Therefore, for any Farey edge $[x,y]$ co-directed with $[\e]$, we see inductively that the pair $(\Ch_\e[x],\Ch_\e[y])$ is a basis. We call such a pair an \emph{$\e$-Christoffel basis}.

\begin{remark}\label{rem:monoid}
The above inductive procedure basically amounts to considering the action of the free submonoid of $\Aut(\F_2)$ generated by two automorphisms corresponding to $\begin{psmallmatrix}1&1\\0&1\end{psmallmatrix}$ and $\begin{psmallmatrix}1&0\\1&1\end{psmallmatrix}$ in $\PGL(2,\Z)$. For more details, see Appendix~\ref{sec:cf}.
\end{remark}

\noindent Suppose now $k>0$ and let $[w;x,y]$ be the other Farey triangle adjacent to $[x,y]$. It is of level $k$. Then there are two cases depending on whether $\Lv[x]<\Lv[y]$ or $\Lv[x]>\Lv[y]$. In the former case we have $\Ch_\e[y]=\Ch_\e[x]\Ch_\e[w]$, and in the latter $\Ch_\e[x]=\Ch_\e[w]\Ch_\e[y]$.

To summarize, if $(x,y)$ is an $\e$-Christoffel basis with $\Lv_\e[x,y]\neq 0$ then it determines the quadrilateral $[x^{-1}y;x,y;xy]$. In particular, $\Lv_\e[x^{-1}y]<\Lv_\e[xy]$. We extract this property and apply it to arbitrary bases.
\begin{definition}\label{def:acute}
A basis $\f=(x,y)$ of $\F_2$ is said to be \emph{acute} relative to the basis $\e=(a,b)$ if $\Lv_\e[x^{-1}y]<\Lv_\e[xy]$.
\end{definition}
\noindent For any basis $(x,y)$ with $\Lv_\e[x,y]\neq0$, note that either $(x,y)$ and $(x^{-1},y^{-1})$ are acute relative to the basis $\e$, or else $(x^{-1},y)$ and $(x,y^{-1})$ are acute relative to the basis $\e$. 

Lastly, suppose $[x,y]$ is a Farey edge co-directed with $[\e]$ and $\Lv_\e[x,y]>0$. In order to simplify the notation let us define $\I[x,y]$ to be $\V^+[x,y]$ if $[x,y]$ lies on the left of $[\e]$, or to be $\V^-[x,y]$ if $[x,y]$ lies on the right of $[\e]$. Then the following lemma is also immediate from the inductive construction of the Christoffel words.

\begin{lemma}\label{lem:chris}
Suppose $[x,y]$ is a Farey edge co-directed with $[\e]$ and $\Lv_\e[x,y]>0$. Then the $\e$-Christoffel words in $\I[x,y]$ are positive words in $\{\Ch_\e[x], \Ch_\e[y]\}$. 
\end{lemma}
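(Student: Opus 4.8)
The plan is to induct on the level $k = \Lv_\e[x,y]$ of the co-directed edge $[x,y]$, strengthening the statement so that the inductive step carries through. Concretely, I would prove by induction on $n \ge k$ the following: for every co-directed Farey edge $[x,y]$ with $\Lv_\e[x,y] = k > 0$ and every vertex $[w] \in \I[x,y]$ with $\Lv_\e[w] \le n$, the Christoffel word $\Ch_\e[w]$ is a positive word in $\{\Ch_\e[x],\Ch_\e[y]\}$; moreover, I would simultaneously track that every co-directed edge $[x',y']$ inside $\I[x,y]$ has the property that $\Ch_\e[x']$ and $\Ch_\e[y']$ are themselves positive in $\{\Ch_\e[x],\Ch_\e[y]\}$. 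The base case $n = k$ is immediate: the only vertices of $\I[x,y]$ of level $\le k$ are the endpoints $[x]$ and $[y]$ themselves (since $[x,y]$ lies on the boundary of the half-disk region $\I[x,y]$ of the Farey triangulation, and $\Lv_\e$ only increases as one moves away from $[\e]$), and $\Ch_\e[x], \Ch_\e[y]$ are trivially positive words of length one in $\{\Ch_\e[x],\Ch_\e[y]\}$.

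For the inductive step, take a vertex $[z] \in \I[x,y]$ with $\Lv_\e[z] = n+1$. By the discussion preceding the lemma, $[z]$ is contained in a unique triangle $[x',y';z]$ of level $n+1$ in which the edge $[x',y']$ is co-directed with $[\e]$; moreover $\Lv_\e[x'], \Lv_\e[y'] \le n$, the vertices $[x'],[y']$ lie in $\overline{\I[x,y]}$, and $\Ch_\e[z] = \Ch_\e[x']\Ch_\e[y']$ by the definition of the Christoffel function. I would first check that $[x',y']$ genuinely lies in $\I[x,y]$: this is a combinatorial fact about the nested half-disk structure cut out by Farey edges, namely that the triangle adjacent to $[x,y]$ on the far side from $[\e]$, together with everything it separates from $[\e]$, is exactly $\I[x,y]$, and the co-directed edge $[x',y']$ feeding into $[z]$ must lie in that region. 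Then by the induction hypothesis applied at level $n$, each of $\Ch_\e[x']$ and $\Ch_\e[y']$ is a positive word in $\{\Ch_\e[x],\Ch_\e[y]\}$, hence so is their concatenation $\Ch_\e[z]$; and the edge $[x',y']$ (and the two new co-directed edges it spawns via Nielsen moves, $[x',z]$ and $[z,y']$) continue to have endpoints whose Christoffel words are positive in $\{\Ch_\e[x],\Ch_\e[y]\}$, closing the induction.

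The main obstacle, and the place where I would spend the most care, is the combinatorial bookkeeping about the region $\I[x,y]$ and its interaction with the co-direction convention: I need that the inductive description of Christoffel words "stays inside" $\I[x,y]$ — i.e. that when one builds up $\Ch_\e[z]$ for $[z] \in \I[x,y]$, the intermediate vertices $[x'],[y']$ never escape $\overline{\I[x,y]}$, and that the co-directed edge entering $[z]$ is the one internal to $\I[x,y]$ rather than one pointing back toward $[\e]$. This amounts to observing that the edge $[x,y]$ bounds a sub-disk of $\overline{\H^2}$ not containing $[\e]$, that $\I[x,y]$ is the vertex set of that sub-disk, and that levels computed relative to $[\e]$ restrict on that sub-disk to levels that increase monotonically away from $[x,y]$ — so the "local" inductive construction of Christoffel words (triangle of level $n+1$ with a co-directed edge of level $\le n$ on its $[\e]$-side) matches the construction one would run with $[x,y]$ itself as the base edge. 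Once this identification of regions is pinned down, the rest is a one-line induction; I would state the region fact explicitly (perhaps as an observation about the Farey triangulation phrased via the nesting of the sets $\V^{\pm}$) and then give the induction as above.
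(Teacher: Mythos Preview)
Your proposal is correct and is exactly what the paper has in mind: the paper does not spell out a proof at all, merely stating that the lemma ``is also immediate from the inductive construction of the Christoffel words,'' which is precisely the induction on level you describe. One small imprecision worth fixing: as defined in the paper, $\I[x,y]$ consists of vertices strictly on one side of $[x,y]$, so $[x]$ and $[y]$ themselves do not belong to $\I[x,y]$; your base case is therefore vacuous (all vertices of $\I[x,y]$ have level $\ge k+1$), and in the inductive step you should say that $[x'],[y']$ lie in $\I[x,y]\cup\{[x],[y]\}$ rather than in $\I[x,y]$ --- but this does not affect the argument.
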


For example, the unoriented primitive class of the Christoffel word $(a^2b)^2ab$, which corresponds to $5/3$ in $\{a,b\}$, lies in $\V^+[a,ab]$. Since $(a^2b)^2ab=a(ab)a(ab)^2$, it corresponds to the positive slope $2/3$ in $\{a,ab\}$. See Figure~\ref{fig:fibchris}(Right).

\begin{remark}
Although the Christoffel words are classical mathematical objects, the terminology was introduced rather recently in the 1990s. See \cite{BLRS09} and \cite{Aig13} for more detailed accounts on the theory of Christoffel words. See also \cite{Cohn71}, \cite{Cohn72} and \cite{KR07}. In the literature of Kleinian groups and low-dimensional topology, the Christoffel words also appeared in \cite{Jor03}, \cite{JM79} and \cite{OZ81} (with correction \cite{GR99}) as well as in \cite{CMZ81} (in a somewhat weaker form). Compare also \cite{GK11}.
\end{remark}

\subsection{Level-\texorpdfstring{$n$}{n} partition of Farey vertices}

For each integer $n\ge0$ we associate a partition of the vertex set $\V$ as follows. Let $\V_{\le n}$ be the (finite) subset of $\V$ consisting of all vertices of level at most $n$. Under the embedding $\V=\p^1(\Q)\subset\p^1(\R)$, consider the complement of $\V_{\le n}$ in $\p^1(\R)$. It consists of $2^{n+1}$ open intervals $I_j$ ($1\le j\le 2^{n+1}$).
\begin{definition}\label{def:partition}
The \emph{level-$n$ partition} of $\V$ is defined to be the partition
\[
\V=\V_{\le n}\cup\I_1\cup\I_2\cup\cdots\I_{2^{n+1}},
\]
where $\I_j=I_j\cap\p^1(\Q)$ for $1\le j\le 2^{n+1}$. We still call each $\I_j$ an \emph{interval} and cyclically order the indices as in Figure~\ref{fig:n-partition}.
\end{definition}

\begin{figure}[ht]
\labellist
\pinlabel {\large $\I_{1}$} at 80 224
\pinlabel {\large $\I_{2}$} at 80 26
\pinlabel {\large $\I_{1}$} at 310 212
\pinlabel {\large $\I_{2}$} at 470 212
\pinlabel {\large $\I_{4}$} at 310 38
\pinlabel {\large $\I_{3}$} at 470 38
\pinlabel {\large $\I_{1}$} at 534 180
\pinlabel {\large $\I_{8}$} at 534 70
\pinlabel {\large $\I_{2}$} at 600 234
\pinlabel {\large $\I_{7}$} at 600 14
\pinlabel {\large $\I_{3}$} at 674 234
\pinlabel {\large $\I_{6}$} at 674 10
\pinlabel {\large $\I_{4}$} at 740 180
\pinlabel {\large $\I_{5}$} at 740 70
\endlabellist
\centering
\includegraphics[width=\textwidth]{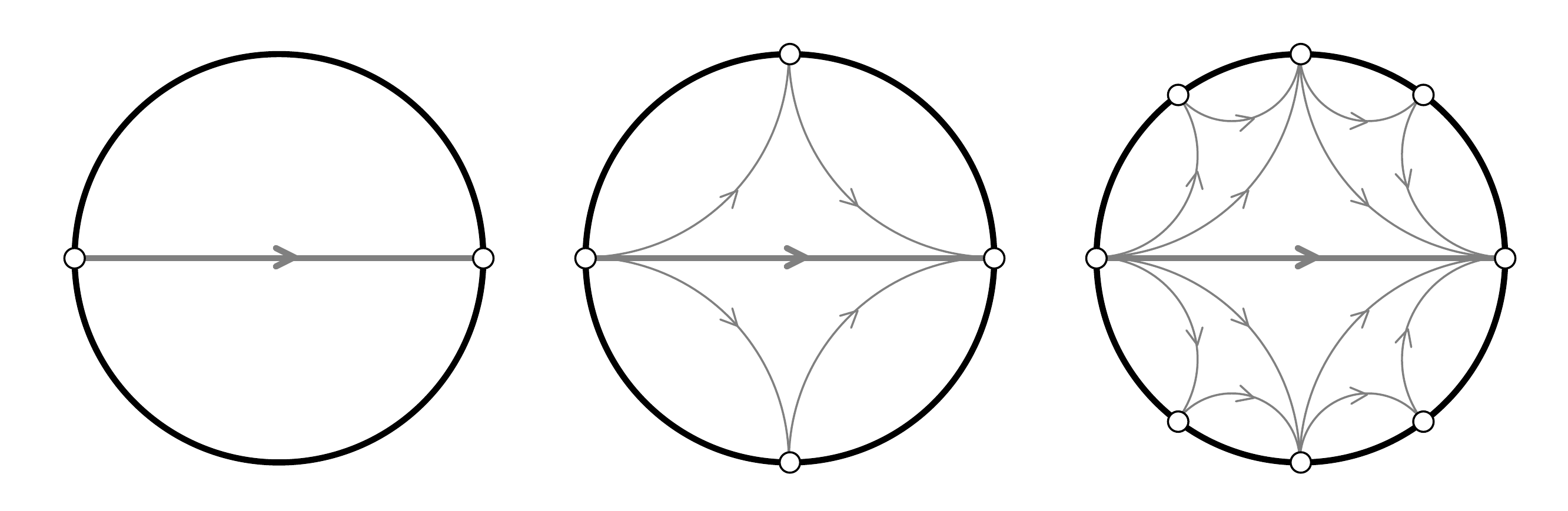}
\caption{The level-$n$ partitions of $\V$ for $n=0,1,2$.}
\label{fig:n-partition}
\end{figure}

Note that each interval $\I_j$ is either of the form $\V^+[x_j,y_j]$ if $1\le j\le2^n$ or of the form $\V^-[x_j,y_j]$ if $2^n+1\le j\le2^{n+1}$. As we did previously, we can simply write $\I_j=\I[x_j,y_j]$ for all $j$. By Lemma~\ref{lem:chris} the vertices in $\I_j$ have representatives that are positive words in $\{\Ch_\e[x_j],\Ch_\e[y_j]\}$. This property will be essential in our proof of Theorem~\ref{I}.

\section{Representations of \texorpdfstring{$\F_2$}{F2} into \texorpdfstring{$\PSL(2,\C)$}{PSL(2,C)}}\label{sec:3}

We discuss some classical facts on irreducible actions of $\F_2$ on the hyperbolic space $\H^3$. Mostly following the book \cite{Fen} of Fenchel, we emphasize the use of half-turns and the Coxeter extension. We also introduce the notion of amplitude of a right-angled hexagon.

\subsection{Isometries of \texorpdfstring{$\H^3$}{H3} and half-turns}

We recall some of the basic definitions and classical results on the orientation-preserving isometries of the $3$-dimensional hyperbolic space $(\H^3,d_\H)$. We mainly refer to the book by Fenchel \cite{Fen}.

As a model of $(\H^3,d_\H)$ we use the upper half-space model $\C\times\R^+$. The group $\Isom^+(\H^3)$ of orientation preserving isometries of $\H^3$ can be identified, via the Poincar\'e extension, with the group $\PSL(2,\C)=\SL(2,\C)/\{\pm\id\}$ of projective automorphisms of $\p^1(\C)$. See \cite{Fen}*{p.26} for more details. For a projective class $X\in\PSL(2,\C)$ we write $\ti{X}$ to denote one of its lifts in $\SL(2,\C)$.

Under the above identification, non-trivial isometries $X\in\PSL(2,\C)$ of $\H^3$ are classified by the trace of their matrix representatives $\ti{X}\in\SL(2,\C)$: $X$ is \emph{elliptic} if $\tr\ti{X}\in(-2,2)$, \emph{parabolic} if $\tr\ti{X}\in\{-2,2\}$, and \emph{loxodromic} if $\tr\ti{X}\notin[-2,2]$. See \cite{Fen}*{p.46}. If $X$ is elliptic or loxodromic, then it preserves a geodesic line $\Axis_X$ in $\H^3$, called the \emph{axis} of $X$. 

An elliptic isometry $P\in\PSL(2,\C)$ is involutory, that is, of order two if and only if $\tr\ti{P}=0$ if and only if $\ti{P}^2=-\id$; such an isometry will be called a \emph{half-turn}. It is not hard to see that every orientation-preserving isometry $X$ can be expressed
\[
X=PQ
\]
as a composition of two half-turns $P$ and $Q$, although such a decomposition is never unique. See again \cite{Fen}*{pp.46-47}.

Below we shall investigate this decomposition using matrices in $\SL(2,\C)$. For our purpose we focus only on the case when the isometry $X$ is elliptic or loxodromic. The \emph{complex translation length} of $X$
\[
\la_\H(X)=\ell_\H(X)+i\theta_\H(X)\in\A=\C/2\pi i\Z
\]
is the complex number such that its real part $\ell_{\H}(X)\ge0$ is the (real) \emph{translation length} along the axis and its imaginary part $\theta_\H(X)\in\R/2\pi\Z\cong(-\pi,\pi]$ is the \emph{rotation angle} measured according to the right-hand rule.

A half-turn $P\in\PSL(2,\C)$ determines a geodesic $\Axis_P$ in $\H^3$. The two lifts $\pm \ti{P}\in\SL(2,\C)$ of $P$ have zero trace. An important fact is that we can consistently associate each of these matrices with one of the orientations of the geodesic $\Axis_P$; when the oriented geodesic is determined by an ordered pair of its ideal end points $(u,u')\in\p^1(\C)\times\p^1(\C)$, an explicit formula of the corresponding matrix is given in terms of $u$ and $u'$ in \cite{Fen}*{p.64, (1)-(3)}. 

\begin{convention}
In order to emphasize this fact we shall deliberately abuse notation: a traceless matrix $\ti{P}\in\SL(2,\C)$ will also denote the corresponding \emph{oriented} geodesic as well.
\end{convention}

Now let $X=PQ$ be a decomposition of an elliptic or loxodromic isometry $X$ into two half-turns $P$ and $Q$. Then the axes of $P$ and $Q$ intersect the axis of $X$ orthogonally. Take arbitrary lifts $\ti{P}, \ti{Q}\in\SL(2,\C)$ and orient $\Axis_X$ arbitrarily. Given such a triple $(\ti{P},\ti{Q};\Axis_X)$ of oriented geodesics, Fenchel \cite{Fen}*{p.67} defines the \emph{width} from $\ti{Q}$ to $\ti{P}$ along $\Axis_X$
\[
\eta(\ti{Q},\ti{P})=\eta(\ti{Q},\ti{P};\Axis_X):=\pm\la_\H(H)\in\A=\C/2\pi i\Z
\]
as the \emph{signed} complex translation length of $H$, where $H$ is the isometry that has axis $\Axis_X$ and maps $\ti{Q}$ to $\ti{P}$, and the sign is provided in accordance with the orientation of $\Axis_X$. (We shall always use the simpler notation $\eta(\ti{Q},\ti{P})$ assuming implicitly that the orientation of $\Axis_X$ is understood.) See Figure~\ref{fig:width}.
\begin{figure}[ht]
\labellist
\pinlabel {$\eta(\ti{Q},\ti{P})=\pm(\ell_\H+i\theta_\H)$} at -40 180
\pinlabel {$\ell_\H$} at 130 110
\pinlabel {$\theta_\H$} at 172 196
\pinlabel \rotatebox{18}{$\Axis_X$} at 140 66
\pinlabel {$\ti{Q}$} at 106 0
\pinlabel {$\ti{P}$} at 220 10
\endlabellist
\centering
\includegraphics[width=0.3\textwidth]{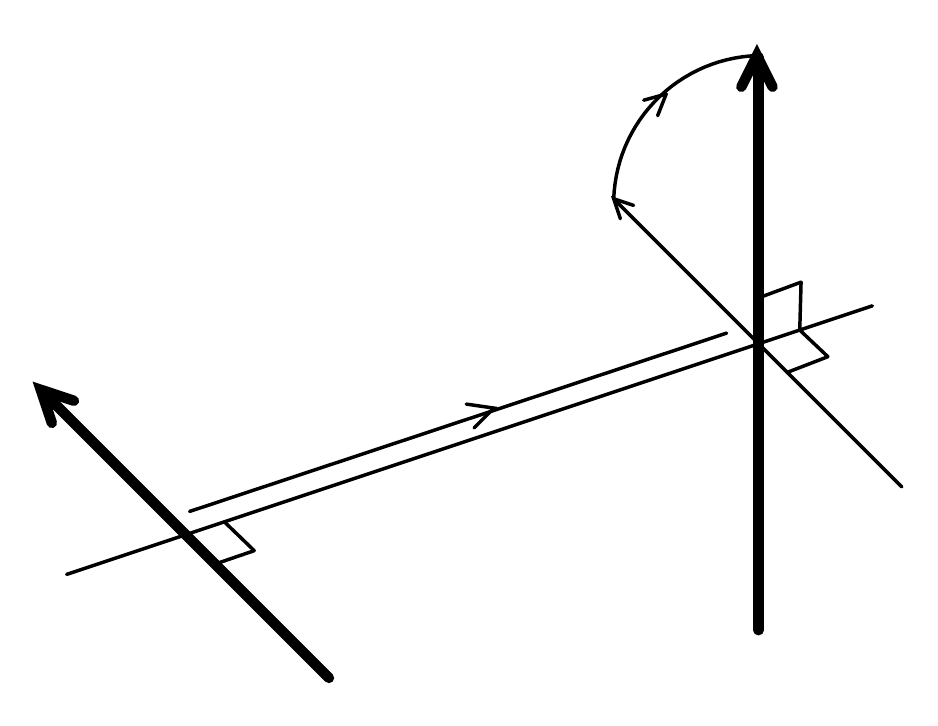}
\caption{The width $\eta(\ti{Q},\ti{P})$.}
\label{fig:width}
\end{figure}
Let $\ti{H}\in\SL(2,\C)$ be an arbitrary lift of $H$. Then Fenchel \cite{Fen}*{p.68} shows that
\[
X=H^2=PQ, \quad\ti{H}^2=-\ti{P}\ti{Q}, \quad\frac{1}{2}\tr\ti{H}^2=\cosh\eta(\ti{Q},\ti{P}).
\]

Let us set $\ti{X}=\ti{H}^2$ and assume that $\Axis_X$ is oriented naturally in the translation direction of $X$. Then $\eta(\ti{Q},\ti{P})$ is one of the two \emph{half} translation lengths of $X$, that is, $\la_\H(X)/2=\eta(\ti{Q},\ti{P})$ or $\eta(\ti{Q},\ti{P})+\pi i$, and we have
\begin{align}\label{eqn:cosh}
\frac{1}{2}\tr\ti{X}=\cosh\eta(\ti{Q},\ti{P})=\pm\cosh\frac{\la_\H(X)}{2}.
\end{align}
For a complex number $\eta=\ell+i\theta$ ($\ell\ge0$) we have $\cosh\eta=\cosh\ell\cos\theta+i\sinh\ell\sin\theta$. So we see that $|\cosh\eta|^2=\cosh^2\ell-\sin^2\theta=\sinh^2\ell+\cos^2\theta$, in particular, $\sinh\ell\le |\cosh\eta| \le \cosh\ell$. Thus from the above formula for $\tr\ti{X}$ we obtain
\begin{align}\label{eqn:ltr}
e^{\frac{1}{2}\ell_\H(X)}-1
\le |\tr\ti{X}|
\le e^{\frac{1}{2}\ell_\H(X)}+1
< e^{\frac{1}{2}\ell_\H(X)+1}
\end{align}

For a number $h>0$ its \emph{parallel angle} $\al(h)\in(0,\pi/2)$ is defined by the formula
\begin{align}\label{eqn:parallel_angle}
\sin\al(h) \cosh h=1.
\end{align}
See Fenchel \cite{Fen}*{p.92}. A geometric meaning of $\al(h)$ is given in Figure~\ref{fig:parallel_angle}(Left).

\begin{figure}[ht]
\labellist
\pinlabel {$\al(h)$} at 70 170
\pinlabel {$h$} at 80 106
\pinlabel {$\H^2$} at 180 200
\pinlabel {$\al$} at 386 74
\pinlabel {$\h$} at 320 160
\pinlabel {$\h_0$} at 450 196
\pinlabel {$Y(\h)$} at 720 210
\pinlabel \rotatebox{18}{$\Axis_Y$} at 640 122
\pinlabel {$\ti{Q}$} at 256 76
\pinlabel {$\ti{P}$} at 520 200
\endlabellist
\centering
\includegraphics[width=\textwidth]{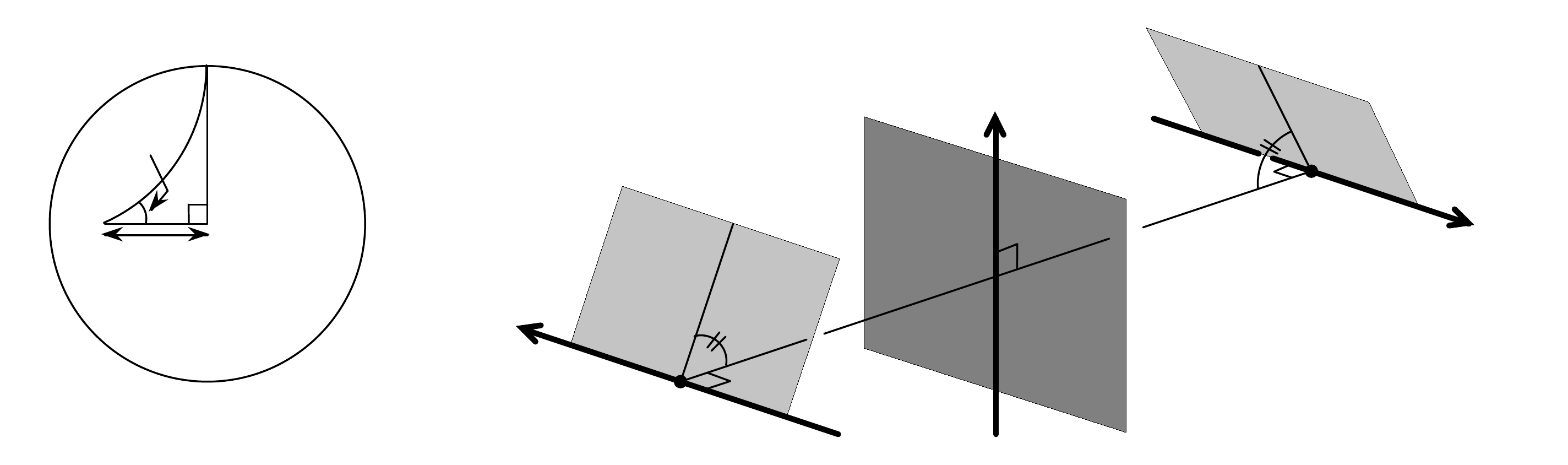}
\caption{(Left) The parallel angle $\al(h)$ of $h$. (Right) Proof of Lemma~\ref{lem:parallel_angle}.}
\label{fig:parallel_angle}
\end{figure}

Suppose a (totally geodesic) plane $\h$ intersects the axis of a loxodromic isometry $Y$ transversely. The following lemma says that if the (real) translation length of $Y$ is large enough then, regardless of the rotation angle of $Y$, the plane $\h$ and its image $Y(\h)$ are \emph{ultra-parallel}, that is, their closures in $\overline{\H^3}$ are disjoint.

\begin{lemma}\label{lem:parallel_angle}
Let $Y$ be a loxodromic isometry and $\h\subset\H^3$ a plane intersecting $\Axis_Y$ with angle $\al\in(0,\pi/2]$. If the angle $\al$ is greater than the parallel angle $\al(\ell_\H(Y)/2)$ of the half (real) translation length of $Y$, then the plane $\h$ and the image $Y(\h)$ are ultra-parallel.
\end{lemma}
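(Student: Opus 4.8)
The plan is to reduce the statement to a two-dimensional computation inside a single plane containing $\Axis_Y$, where the worst case occurs. First I would set up coordinates adapted to the loxodromic isometry $Y$: put $\Axis_Y$ as a fixed geodesic and let $\ell=\ell_\H(Y)$, $\theta=\theta_\H(Y)$ be its real and imaginary translation data. The plane $\h$ meets $\Axis_Y$ at a point $O$ with angle $\al$. The key geometric reduction is that the question of whether $\h$ and $Y(\h)$ are ultra-parallel (i.e. have disjoint closures in $\overline{\H^3}$) can be detected by looking at the common perpendicular of $\h$ with $\Axis_Y$ and tracking how far $Y$ moves $O$ together with how the normal direction of $\h$ tilts. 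Concretely, I would consider the perpendicular foot data and show that the signed distance between $\h$ and $Y(\h)$, or the relevant separation quantity, is minimized (over the rotation angle $\theta$) when $\theta=0$, i.e. when $Y$ is purely hyperbolic along its axis. This is the content suggested by Figure~\ref{fig:parallel_angle}(Right): the worst configuration is planar.

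Next, in that planar (purely hyperbolic) worst case, the problem becomes the classical two-dimensional picture of a geodesic $\h_0$ (now a line in $\H^2$) crossing a hyperbolic axis at angle $\al$, being translated distance $\ell$ along the axis, and asking when the translate is ultra-parallel to the original. Here I would use the standard hyperbolic trigonometry: a line through $O$ making angle $\al$ with the axis has its nearest point to the axis determined by $\al$, and after translating by $\ell$ the two lines are disjoint precisely when the "height" of each line above the axis, combined with the translation, clears the parallel threshold. The exact inequality should come out to be governed by the parallel-angle function: the two lines fail to be ultra-parallel exactly when $\al \le \al(\ell/2)$, so $\al > \al(\ell/2)$ gives ultra-parallelism. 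I would verify this by computing the common perpendicular length between $\h_0$ and $Y(\h_0)$ and checking it is positive iff $\sin\al\,\cosh(\ell/2) > 1$, which by \eqref{eqn:parallel_angle} is exactly $\al > \al(\ell/2)$.

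Then I would upgrade from the planar case to the general loxodromic case. The point is that adding a rotation angle $\theta\neq 0$ only helps: rotating $\h$ about $\Axis_Y$ before translating can only move $Y(\h)$ further from $\h$ in the relevant sense, because the nearest approach of $\h$ to the axis is unchanged by the rotation while the translate is "twisted away." I would make this precise by writing the separation as a function of $\theta$ and showing it is non-decreasing in $|\theta|$ on the relevant range, or more cleanly by a direct estimate: the distance in $\overline{\H^3}$ between the closures of $\h$ and $Y(\h)$ is bounded below by the corresponding distance in the planar case with the same $\al$ and $\ell$. Since the latter is already positive under the hypothesis $\al > \al(\ell_\H(Y)/2)$, we conclude $\h$ and $Y(\h)$ are ultra-parallel.

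The main obstacle I expect is the rigorous justification that $\theta=0$ is the worst case — i.e. monotonicity of the separation in the rotation parameter. A clean way around grinding through the trigonometry of twisted planes would be to use the matrix/half-turn formalism already set up: write $Y$ via a width $\eta(\ti Q,\ti P)$ as in \eqref{eqn:cosh}, express the normal geodesics to $\h$ and $Y(\h)$, and compute their mutual distance via a trace-like invariant (a $\cosh$ of a complex width), then take absolute values and invoke the elementary inequality $|\cosh(\ell+i\theta)| \ge \sinh\ell$ that was already observed in the text right before \eqref{eqn:ltr}. That inequality is precisely the algebraic incarnation of "rotation only helps," so the estimate should fall out by reducing to the real part. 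If that route is too lossy to hit the sharp constant $\al(\ell_\H(Y)/2)$, one falls back on the explicit planar computation together with an orthogonal-projection argument that the twist does not decrease the height of $\h$ above $\Axis_Y$.
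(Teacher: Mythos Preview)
Your central reduction is backwards. For $\theta=0$ (pure translation along $\Axis_Y$), the plane $\h$ and its translate $Y(\h)$ are \emph{always} ultra-parallel, regardless of $\al$: in the upper half-space model with $\Axis_Y$ vertical through $0$, the map $Y$ is a Euclidean dilation from the origin, and since the boundary circle of $\h$ has $0$ in its interior (because $\h$ meets $\Axis_Y$), its dilate strictly contains it. So your step~2, if carried out, would not recover the condition $\sin\al\,\cosh(\ell/2)>1$ but a vacuous one, and the step~3 claim that ``rotation only helps'' is false. The worst rotation is in fact $\theta=\pi$; there the nested-circle criterion works out to $\cos\al<\tanh(\ell/2)$, which is equivalent to $\al>\al(\ell/2)$. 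Your fallback via $|\cosh(\ell+i\theta)|\ge\sinh\ell$ is aimed in the same wrong direction and would not salvage the argument.

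The paper sidesteps any monotonicity in $\theta$ by interposing a \emph{separating} plane. Write $Y=PQ$ with $\Axis_Q\subset\h$, and let $\h_0$ be the plane through $\Axis_P$ orthogonal to $\Axis_Y$. Crucially $\h_0$ depends only on the midpoint along $\Axis_Y$, not on the rotation. One then passes to the auxiliary plane $\h'\supset\Axis_Y$ orthogonal to both $\h$ and $\h_0$: inside $\h'$, the traces $\h\cap\h'$ and $\h_0\cap\h'$ are a geodesic meeting $\Axis_Y$ at angle $\al$ and a perpendicular at distance $\ell/2$, and these are ultra-parallel exactly when $\al>\al(\ell/2)$, by the very definition of parallel angle. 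Orthogonality to $\h'$ then forces $\h$ and $\h_0$ themselves to be ultra-parallel in $\H^3$. Applying the half-turn $P$ (which fixes $\h_0$ and sends $\h$ to $Y(\h)$) yields the same for $\h_0$ and $Y(\h)$, so $\h_0$ separates $\h$ from $Y(\h)$.
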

\begin{proof}
See Figure~\ref{fig:parallel_angle}(Right). We may express the isometry $Y=PQ$ as a composition of two half-turns $P$ and $Q$ so that the axis $\ti{Q}$ is contained in the plane $\h$. Let $\h_0$ denote the plane which contains the axis $\ti{P}$ and is orthogonal to $\Axis_Y$. (Note that $\h_0$ is determined by the half translation length of $Y$ independently of the rotation angle of $Y$.)

Now consider another plane $\h'$ which contains $\Axis_Y$ and is orthogonal to both $\h$ and $\h_0$. Then $\al$ is the angle between the geodesics $\h\cap\h'$ and $\Axis_Y$. By the definition of parallel angle, we conclude that the geodesics $\h\cap\h'$ and $\h_0\cap\h'$ are ultra-parallel in the plane $\h'$. Since both planes $\h$ and $\h_0$ are orthogonal to the plane $\h'$, we see that $\h$ and $\h_0$ are ultra-parallel as well. By symmetry under the half-turn $P$, we also conclude that $P(\h)=P(Q(\h))=Y(\h)$ and $P(\h_0)=\h_0$ are ultra-parallel. Therefore, the two planes $\h$ and $Y(\h)$ are separated by $\h_0$ and are ultra-parallel.
\end{proof}

\subsection{Irreducible representations and their Coxeter extensions}\label{sec:coxeter}

A homomorphism $\rho:\F(a,b)\to\PSL(2,\C)$ represents an isometric action of $\F(a,b)$ on $\H^3$. Let $(x,y)$ be another basis of $\F_2$. For simplicity we shall write
\[
X=\rho(x),\quad Y=\rho(y),\quad Z=\rho(xy)^{-1}.
\]
The trace of the commutator of $X$ and $Y$ is well-defined without ambiguity of sign for arbitrary lifts $\ti{X}$ and $\ti{Y}$. It is also independent of the choice of basis $(x,y)$ by Theorem~\ref{thm:Nielsen}(b), since $\tr A=\tr A^{-1}$ for $A\in\SL(2,\C)$. We denote this number by
\[
\ka_\rho:=\tr (XYX^{-1}Y^{-1}).
\] 
By definition, the representation $\rho$ is \emph{irreducible} if its action on $\p^1(\C)$ has no fixed point. It is known that this is the case if and only if $\ka_\rho\neq2$. See, for example, \cite{Gol09}*{Proposition 2.3.1}.

Let $\W(p,q,r)=\langle p,q,r \mid p^2=q^2=r^2=1\rangle=\Z/2\ast\Z/2\ast\Z/2$ be the free (or universal) Coxeter group of rank three. The elements $pq$ and $qr$ freely generate a subgroup $\langle pq, qr \rangle$ of index two. (The reader may recall the discussion in the introduction and Figure~\ref{fig:torus}.)

Given a basis $\f=(x,y)$ of $\F_2$, let 
\[
\psi_\f:\F(x,y)\to\W(p,q,r)
\]
denote the embedding defined by $\psi_\f(x)=pq$ and $\psi_\f(y)=qr$. We say the group $\W(p,q,r)$, together with $\psi_\f$, is the \emph{Coxeter extension} of $\F_2$ associated with the basis $\f=(x,y)$.

\begin{remark}\label{rem:2fold}
The lack of ``notational" $3$-fold symmetry in the definition of $\psi_\f$ is intentional and this way we emphasize the $2$-fold symmetry of the Farey triangulation. In Section~\ref{sec:pal}, however, we will define $\psi_\f$ differently in order to respect a $3$-fold symmetry.
\end{remark}

For irreducible representations of $\F_2$ into $\PSL(2,\C)$ we have the following classical theorem. For a proof see, for example, \cite{Jor03}*{p.185} or \cite{Fen}*{II, p.94}. See also \cite{Gol09}*{Theorem B and Theorem 3.2.2} for a more comprehensive account.
\begin{theorem}[Coxeter extension]\label{thm:coxeter}
Let $\f=(x,y)$ be a basis of $\F_2$. If $\rho:\F(x,y)\to\PSL(2,\C)$ is an irreducible representation, then there exists a unique representation $\rho_\f:\W(p,q,r)\to\PSL(2,\C)$ such that $\rho=\rho_\f\circ\psi_\f$.
\end{theorem}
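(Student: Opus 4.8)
The plan is to construct the three half-turns $P=\rho_\f(p)$, $Q=\rho_\f(q)$, $R=\rho_\f(r)$ explicitly by solving a small system of trace equations, and then to read off both existence and uniqueness from a one-dimensionality statement. Since $\psi_\f(x)=pq$ and $\psi_\f(y)=qr$, any $\rho_\f$ with $\rho=\rho_\f\circ\psi_\f$ that sends $p,q,r$ to half-turns must satisfy $PQ=X$ and $QR=Y$; using $P^2=Q^2=1$ in $\PSL(2,\C)$ this forces
\[
Q=PX,\qquad R=QY=PXY .
\]
So the whole problem reduces to producing a single half-turn $P$ with the property that $PX$ and $PXY$ are again half-turns. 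Granting such a $P$, I would set $Q:=PX$ and $R:=PXY$, invoke the universal property of the free product $\W(p,q,r)=\Z/2\ast\Z/2\ast\Z/2$ to get a homomorphism $\rho_\f$ with $\rho_\f(p)=P$, $\rho_\f(q)=Q$, $\rho_\f(r)=R$, and check that $\rho_\f(pq)=P(PX)=X=\rho(x)$ and $\rho_\f(qr)=Q(QY)=Y=\rho(y)$, so that $\rho_\f\circ\psi_\f=\rho$ on the generators $x,y$, hence everywhere.

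Next I would translate the conditions on $P$ into linear algebra. Fix lifts $\ti X,\ti Y\in\SL(2,\C)$. Recalling that an element of $\PSL(2,\C)$ is a half-turn exactly when one (equivalently any) of its $\SL(2,\C)$-lifts has trace zero, the requirement that $P$, $PX$ and $PXY$ be half-turns becomes, for a lift $\ti P$,
\[
\tr\ti P=0,\qquad \tr(\ti P\ti X)=0,\qquad \tr(\ti P\ti X\ti Y)=0,
\]
together with the normalization $\det\ti P=1$; none of the three equations depends on the chosen lifts. Restricting to the three-dimensional space $V$ of trace-zero $2\times2$ complex matrices, the last two equations define linear functionals whose common kernel $K\subseteq V$ satisfies $\dim_\C K\ge1$.

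The heart of the matter — and the only place irreducibility is used — is to show that $K$ is exactly one-dimensional and that every nonzero element of $K$ is invertible. Suppose $0\ne M\in K$ is not invertible; then $M$ is a nonzero nilpotent trace-zero matrix, so $M^2=0$, $M$ has rank one, and $\ker M=\operatorname{im}M=\C v$ for some $v\ne0$. A one-line computation in a basis adapted to $v$ shows that for such an $M$ one has $\tr(MA)=0$ if and only if $Av\in\C v$. Hence $\tr(M\ti X)=0$ gives $\ti Xv\in\C v$, and $\tr(M\ti X\ti Y)=0$ gives $\ti X\ti Yv\in\C v$, so $\ti Yv=\ti X^{-1}(\ti X\ti Yv)\in\C v$ as well; thus $[v]\in\p^1(\C)$ is fixed by $\rho(x)$ and $\rho(y)$, hence by all of $\rho(\F_2)$, contradicting irreducibility. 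So, for irreducible $\rho$, $K$ contains no nonzero nilpotent. But any linear subspace of $V$ of dimension at least two meets the cone $\{\det=0\}$ of non-invertible matrices nontrivially (a binary quadratic form has a nontrivial zero over $\C$), and $V$ itself obviously contains nonzero nilpotents; since $K$ contains none, $\dim_\C K=1$, say $K=\C M_0$ with $\det M_0\ne0$. Then $\ti P:=M_0/\sqrt{\det M_0}\in\SL(2,\C)$ is a lift of the desired half-turn $P$, which proves existence. For uniqueness, any competing $\rho_\f'$ sending $p,q,r$ to half-turns forces, as above, $Q'=P'X$ and $R'=P'XY$, hence a lift $\ti{P'}\in K$ with $\det\ti{P'}=1$; since $\dim_\C K=1$ the only such matrices are $\pm\ti P$, so $P'=P$, whence $Q'=Q$, $R'=R$, and $\rho_\f'=\rho_\f$.

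I expect this dichotomy — a non-invertible solution forces a common fixed point on $\p^1(\C)$ — to be the main point; the remainder is routine linear algebra. One minor loose end is the degenerate scenario in which a hypothetical extension sends some generator to the identity rather than to a genuine half-turn: if $\rho_\f(q)=\id$ then $X=\rho_\f(p)$ is an involution, if $\rho_\f(p)=\id$ then again $X=\rho_\f(q)$ is an involution, and if $\rho_\f(r)=\id$ then $Y=\rho_\f(q)$ is an involution. Hence whenever neither $\rho(x)$ nor $\rho(y)$ is an involution — which is automatic once $\rho$ satisfies the Q-conditions, since then every primitive element acts loxodromically — no such degeneracy arises, and $\rho_\f$ is unique among all representations of $\W(p,q,r)$.
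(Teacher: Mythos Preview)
The paper does not supply its own proof of this statement; it refers the reader to J\o rgensen, Fenchel, and Goldman. Your approach---solving three linear trace equations on the space of trace-zero $2\times2$ matrices and using irreducibility precisely to exclude nilpotent solutions, so that the one-dimensional solution space misses the determinant cone---is correct and self-contained, and is close in spirit to the algebraic treatment in Goldman's survey. Fenchel's account is more geometric: he builds $P$, $Q$, $R$ as half-turns about the common perpendiculars of the relevant axes, which is natural once $X$, $Y$, $Z$ are loxodromic but less uniform than your argument for general irreducible $\rho$.

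Your closing caveat is not idle. Under irreducibility alone the uniqueness clause can fail as literally stated: if $\rho(x)$ and $\rho(y)$ are half-turns about skew axes in $\H^3$, then $\rho$ is irreducible, yet besides the extension your construction produces there is also the degenerate one $P'=X$, $Q'=\id$, $R'=Y$. So uniqueness should be read either among extensions sending $p,q,r$ to genuine half-turns, or under the extra hypothesis that neither $\rho(x)$ nor $\rho(y)$ is an involution---automatic once $X$, $Y$, $Z$ are loxodromic, which is the only regime the paper subsequently uses. You diagnosed this accurately.
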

\noindent We also call $\rho_\f$ the \emph{Coxeter extension} of $\rho$ associated with the basis $\f=(x,y)$. We simplify the notations by setting
\begin{align}\label{eqn:pqr}
P=\rho_\f(p),\quad Q=\rho_\f(q),\quad R=\rho_\f(r).
\end{align}
We then have $X=PQ$, $Y=QR$ and $Z=RP$.

Henceforth, we focus on the case of our most interest, namely, when $X$, $Y$ and $Z$ are all loxodromic. Then irreducibility of $\rho$ implies that the half-turns $P$, $Q$ and $R$ are uniquely defined by the following properties. The axis $\Axis_Q$ is the common perpendicular of $\Axis_X$ and $\Axis_Y$. Likewise, the axis $\Axis_P$ (resp. $\Axis_R$) is the common perpendicular of $\Axis_Z$ and $\Axis_X$ (resp. $\Axis_Y$). We denote by
\[
\hex(\rho,\f):=(\Axis_R,\; \Axis_Y,\; \Axis_Q,\; \Axis_X,\; \Axis_P,\; \Axis_Z)
\]
the cyclically ordered sextuple of the geodesics, and call it the \emph{right-angled hexagon} associated with $\rho$ and $\f=(x,y)$.

\subsection{Right-angled hexagons and their amplitudes}\label{sec:amplitude}

We continue with the previous discussion, so assume $\rho:\F(x,y)\to\PSL(2,\C)$ is irreducible and $X$, $Y$ and $Z$ are all loxodromic. Henceforth, we further assume that the geodesics $\Axis_X$, $\Axis_Y$, and $\Axis_Z$ are oriented naturally in accordance with the transformation directions of $X$, $Y$ and $Z$.

Given the right-angled hexagon $\hex(\rho,\f)$ we will be mostly interested in the angular part of the width $\eta(\Axis_Y,\Axis_X)$ between $\Axis_X$ and $\Axis_Y$. For the purpose of computation it will be convenient if we lift $\rho$ to $\SL(2,\C)$ and use the trace identities for matrices in $\SL(2,\C)$.

More precisely, we take arbitrary lifts $\ti{P}, \ti{Q}, \ti{R}\in\SL(2,\C)$ of $P, Q, R$, respectively. This gives rise to a lift $\tho:\F(x,y)\to\SL(2,\C)$ of $\rho$ defined by
\begin{align*}
\tho(x)=\ti{X}=-\ti{P}\ti{Q},\quad\tho(y)=\ti{Y}=-\ti{Q}\ti{R}
\end{align*}
In this case we have $\ti{Z}=\ti{Y}^{-1}\ti{X}^{-1}=(-\ti{R}\ti{Q})(-\ti{Q}\ti{P})=-\ti{R}\ti{P}$. Conversely, a lift $\tho:\F(x,y)\to\SL(2,\C)$ of $\rho$ determines the lifts $\ti{P}, \ti{Q}, \ti{R}\in\SL(2,\C)$ uniquely (up to reverting their signs simultaneously).

The right-angled hexagon $\hex(\rho,\f)$ is now oriented (in the sense of Fenchel \cite{Fen}*{p.79}). The cyclically ordered sextuple of the oriented geodesics will be denoted by 
\[
\hex(\tho,\f):=(\ti{R},\; \Axis_Y,\; \ti{Q},\; \Axis_X,\; \ti{P},\; \Axis_Z).
\]
See Figure~\ref{fig:hexagon}.

\begin{figure}[ht]
\labellist
\pinlabel {$\ti{P}$} at 0 90
\pinlabel {$\ti{Q}$} at 174 10
\pinlabel {$\ti{R}$} at 396 230
\pinlabel \rotatebox{-16}{$\Axis_X$} at 124 118
\pinlabel \rotatebox{16}{$\Axis_Y$} at 280 116
\pinlabel \rotatebox{-9}{$\Axis_Z$} at 240 206
\endlabellist
\centering
\includegraphics[width=0.4\textwidth]{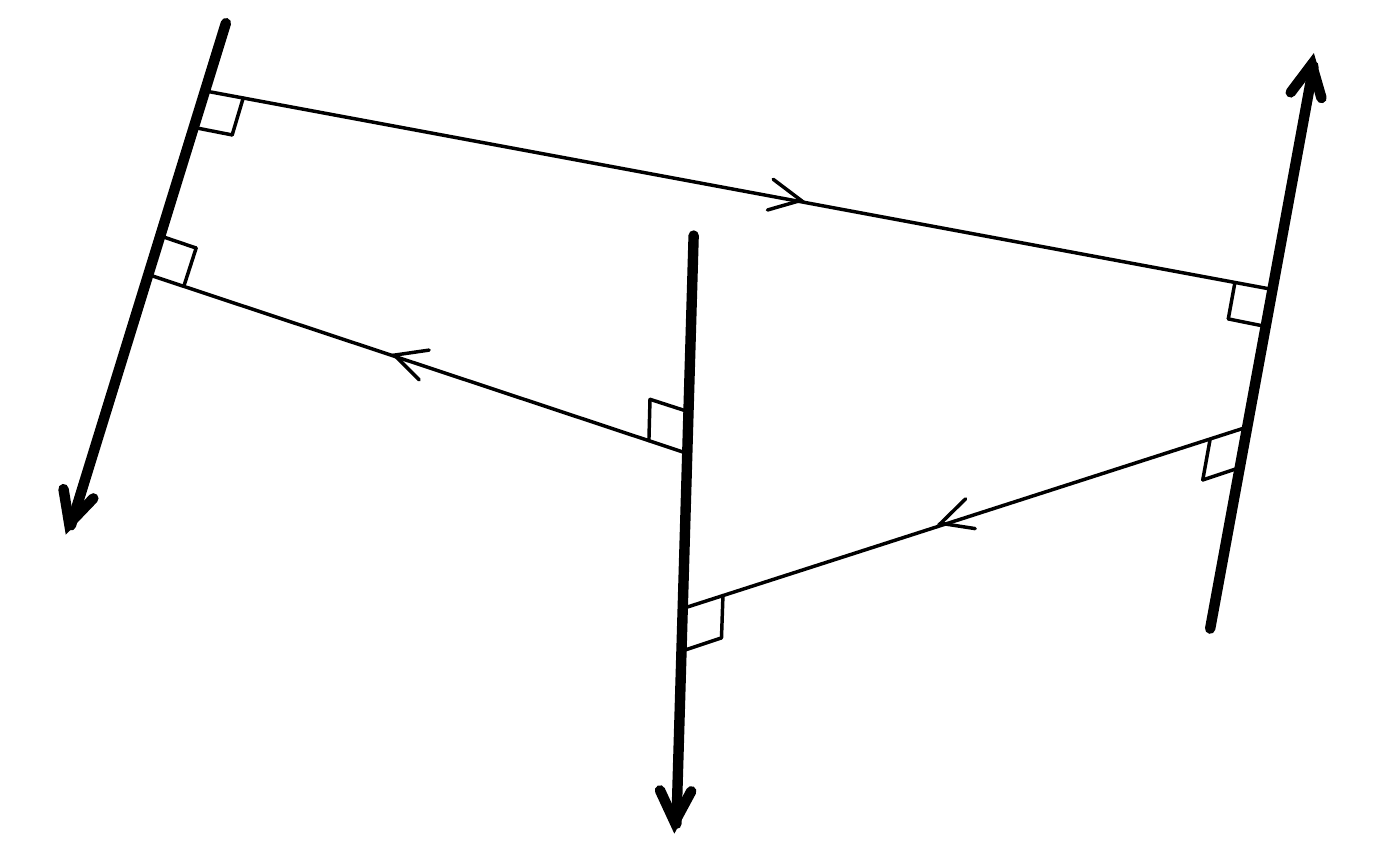}
\caption{The right-angled hexagon $\hex(\tho,\f)$.}
\label{fig:hexagon}
\end{figure}

For brevity let us set
\[
\eta_X=\eta(\ti{Q},\ti{P}),\quad
\eta_Y=\eta(\ti{R},\ti{Q}),\quad
\eta_Z=\eta(\ti{P},\ti{R}),\quad
\eta_Q=\eta(\Axis_Y,\Axis_X).
\]
In fact, the width $\eta_Q$ is determined by $\tho$ only up to sign, since we may change the signs of $\ti{P}, \ti{Q}, \ti{R}$ simultaneously and obtain the same lift $\tho$. However, if we set
\begin{align}\label{eqn:angle}
\theta(\rho,\f)=|\mathrm{Im}\,\eta_Q|\in[0,\pi]
\end{align}
this number is independent of the choice of the lift $\tho$. We call it the \emph{angle} associated to $\rho$ and $\f$.

We shall use the following form of the law of Cosines for $\hex(\tho,\f)$:
\begin{align*}
\cosh\eta_Z
=\cosh\eta_X\cosh\eta_Y+\sinh\eta_X\sinh\eta_Y\cosh\eta_Q.
\end{align*}
See \cite{Fen}*{p.83}. Since we have from \eqref{eqn:cosh}
\begin{align*}
\tr\ti{X}&=-\tr\ti{P}\ti{Q}=2\cosh \eta_X\\
\tr\ti{Y}&=-\tr\ti{Q}\ti{R}=2\cosh \eta_Y\\
\tr\ti{Z}&=-\tr\ti{R}\ti{P}=2\cosh \eta_Z
\end{align*}
the law of Cosines can be written as
\begin{align}\label{eqn:cosine}
\frac{2\,\tr\ti{Z}}{\tr\ti{X}\tr\ti{Y}}=\frac{\cosh\eta_Z}{\cosh\eta_X\cosh\eta_Y}
=1+\tanh\eta_X\tanh\eta_Y\cosh\eta_Q,
\end{align}
which we shall need later. Note that
\begin{align}\label{eqn:tanh}
\mathrm{Re}(\tanh\eta_X)>0\textup{ if and only if }\mathrm{Re}(\eta_X)>0,
\end{align}
the latter of which is the case as we assumed $X$ is loxodromic and $\Axis_X$ is oriented naturally.

Lastly, to the oriented right-angled hexagon $\hex(\tho,\f)$, Fenchel \cite{Fen}*{p.102} assigns a complex number, called its \emph{amplitude}, defined by
\begin{align*}
\am(\tho,\f)=-\frac{1}{2}\tr\ti{P}\ti{Q}\ti{R}.
\end{align*}
He then draws many formulas for the amplitude in geometric terms of the right-angled hexagon $\hex(\tho,\f)$. One of the formulas we shall need later is the following \cite{Fen}*{p.103, (3)}:
\begin{align}\label{eqn:amplitude}
\am(\tho,\f)=-i\sinh\eta_X\sinh\eta_Y\sinh\eta_Q.
\end{align}

The amplitude is closely related to the commutator trace $\ka_\rho$ as follows:
\begin{align}\label{eqn:kappa}
\begin{aligned}
\ti{X}\ti{Y}^{-1}\ti{X}^{-1}\ti{Y}&=\ti{P}\ti{Q}\ti{R}(\ti{Q}\ti{Q})\ti{P}\ti{Q}\ti{R}=-(\ti{P}\ti{Q}\ti{R})^2,\\
\ka_\rho=\tr\ti{X}\ti{Y}^{-1}\ti{X}^{-1}\ti{Y}&=-\tr(\ti{P}\ti{Q}\ti{R})^2=2-\tr^2\ti{P}\ti{Q}\ti{R}=2-4\am^2(\tho,\f).
\end{aligned}
\end{align}
In particular, since $\ka_\rho$ is a constant independent of the basis $\f=(x,y)$, so is the modulus $|\am(\tho,\f)|$.

\section{Primitive stability and the Q-conditions}\label{sec:4}

We define two open conditions on isometric actions $\rho:\F_2\to\PSL(2,\C)\cong\Isom^+(\H^3)$ of $\F_2$ on $(\H^3,d_\H)$: Minsky's primitive stability (Section~\ref{sec:ps}) and Bowditch's Q-conditions (Section~\ref{sec:bq}). In Section~\ref{sec:psbq} we show that the former implies the latter.

\subsection{The isometric action of \texorpdfstring{$\F_2$}{F2} on itself}

Recall that we fixed a basis $\e=(a,b)$ of $\F_2$. As before, we write $|w|_\e=|w|_{(a,b)}$ to denote the \emph{word length} of $w\in\F_2$ with respect to the basis $\e=(a,b)$. 

The \emph{word metric} $d_\e$ on $\F_2$ is defined by $d_\e(u,v)=|u^{-1}v|_\e$ for $u,v\in\F_2$. Then $\F_2$ acts freely and isometrically on the metric space $(\F_2,d_\e)$ by left multiplication. For an element $w\in\F_2$ its \emph{translation length} $\ell_\e(w)$ is defined by
\begin{align}\label{eqn:le}
\ell_\e(w):=\inf_{u\in\F_2}d_\e(u,wu)
\left(=\inf_{u\in\F_2}|u^{-1}wu|_\e=|r|_\e=:\|w\|_\e\right).
\end{align}
Algebraically, this can be interpreted as the \emph{cyclically reduced length} $\|w\|_\e$ of $w$, that is, the word length $|r|_\e$ of a cyclically reduced representative $r$ of the conjugacy class of $w$. Thus the translation length $\ell_\e(w)$ depends only on the $\sim$-equivalence class of $w$.

Note that if $w\in\F_2$ is not the identity then the action of $w$ on $(\F_2,d_\e)$ is \emph{axial}: there exists a (discrete) geodesic $\Z\to(\F_2,d_\e)$ which is invariant under the action of $w$ and is translated non-trivially through the amount $\ell_\e(w)$. We denote the image of this geodesic by $\Axis^\e(w)$ (or simply $\Axis(w)$ when $\e$ is understood) and call it the \emph{axis} of $w$. Note that if $w=uru^{-1}$ then $\Axis(w)=u\Axis(r)$. If $r$ is cyclically reduced, then $\Axis(r)$ contains the cyclic orbit $\{r^i\mid i\in\Z\}$ of the identity; in particular, it passes through the identity.

Similarly, for any basis $\f=(x,y)$, we denote by $\Axis^\f(w)\subset(\F_2,d_\f)$ the axis of a non-trivial element $w$ with respect to $\f$. 

\begin{lemma}\label{lem:axis}
Let $\f=(x,y)$ be an $\e$-Christoffel basis of $\F_2$ with $\Lv_\e[x,y]>0$. If $w$ is a positive word in $\{x,y\}$, then the axis $\Axis^\f(w)$ is of the form $\{g_i \mid i\in\Z\}$, where $g_0=1$ and $g_i^{-1}g_{i+1}\in\{x,y\}$ for all $i\in\Z$, and is contained in the axis $\Axis(w)\subset(\F_2,d_\e)$.
\end{lemma}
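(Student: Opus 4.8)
The plan is to analyze the combinatorial structure of the axis of $w$ in the two word metrics $d_\f$ and $d_\e$ separately, and then compare. First I would observe that since $\f=(x,y)$ is a basis of $\F_2$, a positive word $w$ in $\{x,y\}$ is automatically cyclically reduced with respect to $\f$ (its first and last letters are $x$ or $y$, never their inverses), so its $\f$-axis passes through the identity. Writing $w = s_1 s_2 \cdots s_n$ with each $s_j\in\{x,y\}$, the geodesic segment from $1$ to $w$ in $(\F_2, d_\f)$ visits the partial products $g_0 = 1,\ g_1 = s_1,\ \dots,\ g_n = w$, with $g_j^{-1} g_{j+1} = s_{j+1}\in\{x,y\}$; extending this segment periodically by left-translates under $\langle w\rangle$ produces exactly a bi-infinite geodesic $\{g_i\mid i\in\Z\}$ of the asserted form. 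This handles the first assertion.

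For the inclusion $\Axis^\f(w)\subset\Axis(w)$ in $(\F_2,d_\e)$, the key point is that each generator $x$ and $y$ of the $\e$-Christoffel basis is itself a \emph{positive} word in $\{a,b\}$ (or in $\{a^{-1},b\}$), by Lemma~\ref{lem:chris} and the discussion of the Christoffel function preceding it, since $\Lv_\e[x,y]>0$. Consequently the concatenation $w = s_1\cdots s_n$, when each $s_j$ is rewritten in terms of $\e$, \emph{remains reduced} — there is no cancellation either within a single $s_j$ or at the junctions $s_j s_{j+1}$, because all the $\e$-letters involved come from $\{a,b\}$ (resp.\ from $\{a^{-1},b\}$), a set closed under nothing problematic; in particular the last $\e$-letter of $s_j$ is never the inverse of the first $\e$-letter of $s_{j+1}$. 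This shows that $|w|_\e = \sum_j |s_j|_\e$ and, more importantly, that each partial product $g_j = s_1\cdots s_j$ lies on a geodesic in $(\F_2, d_\e)$ from $1$ to $w$: the $\e$-geodesic realizing $w$ passes through every $g_j$. Since $w$ is also cyclically reduced in $\e$ (same no-cancellation argument applied to $s_n s_1$), its $\e$-axis $\Axis(w)$ is the periodic extension of this $\e$-geodesic segment through $1$, and therefore contains all $g_j$, hence all $g_i = w^{\lfloor i/n\rfloor} g_{i \bmod n}$.

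The step I expect to be the main obstacle — though it is really a bookkeeping point rather than a deep one — is verifying rigorously the \emph{no-cancellation at the junctions} claim: that when we expand $s_j s_{j+1}$ with both $s_j, s_{j+1}\in\{\Ch_\e[x],\Ch_\e[y]\}$, the terminal $\e$-letter of $s_j$ and the initial $\e$-letter of $s_{j+1}$ do not cancel. On the side $\V^+[a,b]$ this is immediate since both $\Ch_\e[x]$ and $\Ch_\e[y]$ are positive in $\{a,b\}$ with no inverse letters at all; on the side $\V^-[a,b]$ one uses that they are positive in $\{a^{-1}, b\}$, again with no competing inverse letters, so the same conclusion holds. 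One must also note the boundary case $\Lv_\e[x,y]>0$ is exactly what guarantees neither $\Ch_\e[x]$ nor $\Ch_\e[y]$ is a single letter of ambiguous sign (the indeterminacy of $\Ch_\e[a]$ is irrelevant once the level is positive). With these observations the inclusion follows, and by cyclic invariance of the axis under $\langle w\rangle$ it extends from the fundamental segment $g_0,\dots,g_{n-1}$ to the whole bi-infinite set $\{g_i \mid i\in\Z\}$.
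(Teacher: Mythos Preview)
Your proposal is correct and follows essentially the same approach as the paper's proof: write $w=s_1\cdots s_n$ with $s_i\in\{x,y\}$, observe that since $\f$ is an $\e$-Christoffel basis both $x$ and $y$ are positive in $\{a,b\}$ (or both in $\{a^{-1},b\}$), conclude that $w$ is linearly and cyclically reduced as an $\e$-word, and read off the result. The paper compresses this into three sentences, while you spell out the partial-product description of $\Axis^\f(w)$ and the no-cancellation-at-junctions check explicitly; the only minor inaccuracy is that the positivity of $x$ and $y$ in $\{a,b\}$ or $\{a^{-1},b\}$ comes from the discussion after Definition~\ref{def:chris} rather than from Lemma~\ref{lem:chris} itself.
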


\begin{proof}
If $w$ is positive in $\{x,y\}$ then it is of the form $w=s_1s_2\ldots s_n$ for some $n>0$ where each $s_i\in\{x,y\}$. Since $\f=(x,y)$ is an $\e$-Christoffel basis, $x$ and $y$ are $\e$-Christoffel words and they are both positive in $\{a,b\}$ or both positive in $\{a^{-1},b\}$. (Recall the discussion after Definition~\ref{def:chris}.) Therefore, in either case, $w=s_1s_2\ldots s_n$ is linearly and cyclically reduced as a word in $\{a,b\}$. The lemma now follows.
\end{proof}

\subsection{Minsky's primitive stability}\label{sec:ps}

Minsky \cite{Min} introduced the notation of primitive stability and studied its basic properties. Although the definition makes sense for the free group $\mathsf{F}_n$ of arbitrary rank $n\ge2$, we restrict to the case $n=2$ for simplicity.

Recall that we endowed $\F_2$ with a metric $d_\e$, where $\e=(a,b)$ is the distinguished basis. Suppose a representation $\rho:\F_2\to\PSL(2,\C)$ is given. Upon choosing a base point $o\in\H^3$ the orbit map $\tau_{\rho,o}:\F_2\to\H^3$ is defined by $\tau_{\rho,o}(w)=\rho(w)(o)$. Obviously, it is $\rho$-equivariant: $\tau_{\rho,o}(uw)=\rho(uw)(o)=\rho(u)(\tau_{\rho,o}(w))$ for $u, w\in\F_2$.

\begin{definition}\label{def:ps}
A representation $\rho:\F_2\to\PSL(2,\C)$ is said to be \emph{primitive stable} if there exist a point $o\in\H^3$ and constants $M,c>0$ such that the $\tau_{\rho,o}$-images of the axes $\Axis(x)\subset(\F_2,d_\e)$ of primitive elements $x$ of $\F_2$ are $(M,c)$-quasi-geodesics in $\H^3$, that is,
\[
\frac{1}{M}d_\e(u,v)-c\le d_\H(\rho(u)(o),\rho(v)(o))\le M d_\e(u,v)+c
\]
for all $u,v\in\Axis(x)$.
\end{definition}

\noindent See \cite{Min}*{Definition 3.1}. Note that the property is preserved under the $\PSL(2,\C)$-conjugacy action, so we can consider this definition in the character variety $\X(\F_2)$. As in the introduction we denote by $\PS\subset\X(\F_2)$ the subset consisting of all $\PSL(2,\C)$-conjugacy classes of primitive stable representations.

The definition of primitive stability can be simplified a little. First of all, from the equivariance of $\tau_{\rho,o}$ and the triangle inequality in $\H^3$, one can deduce that the constants $(M,c)$ satisfying the upper inequality above exist for \emph{any} isometric action $\rho$ and for any choice of base point $o\in\H^3$. See \cite{BH}*{Lemma I.8.18} for example. Thus the lower inequality is the only essential requirement for primitive stability.

Furthermore, the requirement for each primitive element is rather for each (unoriented) primitive conjugacy class. For if $x=uru^{-1}$ then $\Axis(x)=u\Axis(r)$ and hence $\tau_{\rho,o}(\Axis(x))=\rho(u)\tau_{\rho,o}(\Axis(r))$ by the equivariance of $\tau_{\rho,o}$. So it suffices to test the inequality only for cyclically reduced representatives.

To summarize, we can simplify the above definition as follows. Henceforth we shall always use this alternative definition.

\begin{definition}[Alternative]\label{def:altps}
A representation $\rho:\F_2\to\PSL(2,\C)$ is said to be \emph{primitive stable} if there exist a point $o\in\H^3$ and constants $m,c>0$ satisfying the following condition: for each unoriented primitive class $[x]\in\V$, there is a cyclically reduced representative $x$ such that
\begin{align*}
m\cdot d_\e(u,v)-c\le d_\H(\rho(u)(o),\rho(v)(o))
\end{align*}
for all $u,v\in\Axis(x)$.
\end{definition}

\subsection{Bowditch's Q-conditions}\label{sec:bq}

Note that, for an element $X\in\PSL(2,\C)$ and its two lifts $\pm\ti{X}\in\SL(2,\C)$, we have $\tr(-\ti{X})=-\tr\ti{X}$ and $\tr\ti{X} = \tr\ti{X}^{-1}$. So, given a representation $\rho:\F_2\to\PSL(2,\C)$, the complex modulus $|\tr(\rho(x))|$ is a well-defined function on the set $\V=\Prim/_\sim$ of unoriented primitive classes of $\F_2$.

\begin{definition}\label{def:bq}
A representation $\rho:\F_2\to\PSL(2,\C)$ is said to satisfy the \emph{Q-conditions} if
\begin{enumerate}[label=\textup{(\roman*)},nosep,leftmargin=*]
\item $\rho(x)$ is loxodromic for all $[x]\in\V$, and
\item $|\tr \rho(x)|\le2$ for only finitely many $[x]\in\V$.
\end{enumerate}
\end{definition}

\noindent See \cite{Bow}*{p.702} and \cite{TWZ08}*{p.765}. The conditions (i) and (ii) will be referred to as the first and second Q-condition, respectively. Since the trace function is invariant under conjugacy, this definition also makes sense in the character variety $\X(\F_2)$. We denote by $\BQ\subset\X(\F_2)$ the subset consisting of all $\PSL(2,\C)$-conjugacy classes of representations satisfying the Q-conditions.

One of the main results of Bowditch \cite{Bow}*{Theorem 2 and Proposition 4.9} and Tan, Wong and Zhang \cite{TWZ08}*{Theorem 3.3} is the following characterization of the second Q-condition. We rephrased their theorem using the equality (\ref{eqn:fib1}).

\begin{theorem}\label{thm:bqpd}
Suppose a representation $\rho:\F_2\to\PSL(2,\C)$ satisfies the first Q-condition. Then $\rho$ satisfies the Q-conditions if and only if there exist uniform positive constants $m=m(\rho)$ and $c=c(\rho)$ such that
\[
\log|\tr\rho(x)| \ge m \cdot\ell_\e(x)-c
\]
for every $[x]\in\V$.
\end{theorem}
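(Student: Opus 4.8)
The statement to prove is Theorem~\ref{thm:bqpd}, which characterizes the second Q-condition (under the standing assumption of the first) by a uniform linear lower bound $\log|\tr\rho(x)|\ge m\cdot\ell_\e(x)-c$. My plan is to deduce this from the quoted work of Bowditch \cite{Bow} and Tan--Wong--Zhang \cite{TWZ08} together with the trace/translation-length estimate~\eqref{eqn:ltr} already established in the excerpt, rather than reproving their flow-theoretic argument from scratch. The key dictionary is: via~\eqref{eqn:fib1} and~\eqref{eqn:fib2}, the Fibonacci value $\Fi_\e[x]=|\Ch_\e[x]|_\e$ equals the cyclically reduced length $\ell_\e(x)$ (this is the remark after~\eqref{eqn:le} combined with~\eqref{eqn:fib1}), so a bound phrased in Bowditch's language in terms of word length is exactly a bound in terms of $\ell_\e$.

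First I would recall the structure of the Bowditch/TWZ argument. Assuming the first Q-condition, one considers the function $[x]\mapsto |\tr\rho(x)|$ on the Farey vertices $\V$, or rather $\log|\tr\rho(x)|$, as a function on the topograph (Remark~\ref{rem:topograph}). The second Q-condition — that $|\tr\rho(x)|\le2$ for only finitely many $[x]$ — is shown in \cite{Bow}*{Theorem~2, Prop.~4.9} and \cite{TWZ08}*{Theorem~3.3} to be equivalent to the existence of a point in the topograph from which the function is ``increasing'' along every edge path outward, with a definite multiplicative gap coming from the trace identity $\tr\rho(x)\tr\rho(y)=\tr\rho(xy)+\tr\rho(xy^{-1})$; iterating this gap along the $\Lv_\e$-gradient flow yields exponential growth of $|\tr\rho(x)|$ in the level $\Lv_\e[x]$. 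Thus there are uniform constants with $\log|\tr\rho(x)|\ge m'\cdot\Lv_\e[x]-c'$. One then converts levels to lengths. The easy direction of that conversion is~\eqref{eqn:fib2}: $\Fi_\e[x]\ge\Lv_\e[x]+1$, i.e. $\ell_\e(x)\ge\Lv_\e[x]+1$, which gives the \emph{reverse} of what is wanted. The needed direction is an upper bound $\Lv_\e[x]\ge$ (const)$\cdot\ell_\e(x)-$(const), equivalently a linear upper bound on Fibonacci values in terms of level; this holds because along any outward edge path of length $n$ the Christoffel word grows by concatenation and its length is a sum of Fibonacci-type increments bounded by $2^{O(n)}$... but in fact one only needs the \emph{ratio} $\log|\tr\rho(x)|/\ell_\e(x)$ bounded below, and since $\log|\tr|$ grows like the number of flow steps while $\ell_\e$ grows at least linearly in that same number of steps and at most exponentially, one must instead compare $\log|\tr\rho(x)|$ directly against $\log$ of the Fibonacci value. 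Concretely, I would use that along the gradient flow $|\tr\rho(x)|$ \emph{and} $\Fi_\e[x]$ both satisfy the same kind of recursion (Fibonacci: $\Fi[z]=\Fi[x]+\Fi[y]$; traces: $|\tr\rho(z)|\approx|\tr\rho(x)|\cdot|\tr\rho(y)|$ up to lower-order error once traces are large), so that $\log|\tr\rho(x)|$ grows at least like a positive multiple of $\log\Fi_\e[x]=\log\ell_\e(x)$ — and one upgrades $\log\ell_\e$ to $\ell_\e$ using that $\ell_\e$ is unbounded and handling the finitely many small-length classes separately into the constant $c$. Actually the cleanest route avoids this subtlety entirely: Bowditch's and TWZ's theorems are \emph{already} stated as the linear-in-length lower bound (their Fibonacci/length-linear form), so I would simply quote \cite{Bow}*{Prop.~4.9} / \cite{TWZ08}*{Thm.~3.3} verbatim, note that their ``$|w|$'' is our $\ell_\e(x)$ by~\eqref{eqn:fib1} and the identification $\Fi_\e[x]=\ell_\e(x)$, and conclude. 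The converse implication — linear lower bound $\Rightarrow$ second Q-condition — is immediate: if $\log|\tr\rho(x)|\ge m\ell_\e(x)-c$ then $|\tr\rho(x)|\le2$ forces $\ell_\e(x)\le(c+\log2)/m$, and there are only finitely many $[x]\in\V$ with bounded cyclically reduced length.

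\textbf{Main obstacle.} The substantive content is entirely inside the cited theorems of Bowditch and Tan--Wong--Zhang; the only real work here is the bookkeeping that their ``exponential growth of $|\tr|$ along the topograph'' statement translates to a clean uniform inequality $\log|\tr\rho(x)|\ge m\ell_\e(x)-c$ valid for \emph{all} $[x]\in\V$ simultaneously. The delicate point is uniformity near the ``center'' of the flow: the finitely many vertices where $|\tr\rho(x)|$ is small (allowed by the second Q-condition) must be absorbed into the additive constant $c$, and one must check the gradient-flow estimate does not degenerate as the trace passes through the region $|\tr|\approx2$. I expect this to be handled exactly as in \cite{TWZ08}*{\S3}, by choosing $c$ large enough to dominate the maximum of $m\ell_\e(x)$ over the (finite) exceptional set, and by using~\eqref{eqn:ltr} to see that once $\ell_\e(x)$ exceeds a fixed threshold the trace is automatically large enough for the multiplicative recursion estimate to apply with a uniform gap. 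So the plan is: (1) quote the Bowditch/TWZ equivalence in its length-linear form; (2) record the identification $\ell_\e(x)=\Fi_\e[x]=|\Ch_\e[x]|_\e$ so that ``their bound'' is ``our bound''; (3) prove the (trivial) converse by a counting argument; (4) absorb the finitely many exceptional classes into $c$. There is no genuinely new difficulty, and I would keep the proof short, deferring all flow-theoretic estimates to the references.
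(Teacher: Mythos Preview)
Your proposal is correct and matches the paper's treatment: the paper does not prove Theorem~\ref{thm:bqpd} at all but simply states it as a result of Bowditch \cite{Bow}*{Theorem~2 and Proposition~4.9} and Tan--Wong--Zhang \cite{TWZ08}*{Theorem~3.3}, exactly as you propose to do. Your additional bookkeeping (identifying $\ell_\e(x)=\Fi_\e[x]$ via~\eqref{eqn:fib1} and~\eqref{eqn:le}, and the trivial converse direction) is fine and is implicit in the paper's ``We rephrased their theorem using the equality~(\ref{eqn:fib1})''; the muddled middle paragraph about levels versus lengths is unnecessary, since---as you yourself note---the cited theorems are already stated in length-linear form.
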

\noindent As a consequence we see that, for a representation $\rho$ satisfying the Q-conditions, the set $\{\pm\tr\rho(x) \mid [x]\in\V\}$ is a discrete subset of $\C$.

In the course of proving the above theorem, the authors investigated flows on the topograph (see Remark~\ref{rem:topograph}) and showed the existence of a ``finite attracting subtree." See \cite{Bow}*{Corollary 3.12 and Lemma 3.15} and \cite{TWZ08}*{Lemmas 3.21 and 3.24}. This fact can be reformulated in our terminology as follows: 
\begin{proposition}\label{prop:subtree}
If a representation $\rho:\F_2\to\PSL(2,\C)$ satisfies the Q-conditions, then there exists a uniform constant $N_\rho>0$ with the following property: if a Farey edge $[x,y]$ has $\Lv[x,y]\ge N_\rho$ and determines the quadrilateral $[w;x,y;z]$ then 
\[
|\tr\rho(w)|\le|\tr\rho(z)|.
\]
\end{proposition}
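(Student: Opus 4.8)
The plan is to reformulate the desired inequality in terms of the amplitude identities of Section~\ref{sec:amplitude} and then to invoke the already-established properties of the Bowditch/Tan--Wong--Zhang flow on the topograph, translated through equation~\eqref{eqn:fib1} and Theorem~\ref{thm:bqpd}. Fix a representation $\rho$ satisfying the Q-conditions; by the first Q-condition all $\rho(x)$ with $[x]\in\V$ are loxodromic, and in particular $\rho$ is irreducible, so the Coxeter extension and the amplitude machinery apply to every Christoffel basis. Given a Farey edge $[x,y]$ co-directed with $[\e]$ and of large level, the quadrilateral $[w;x,y;z]$ has $\Ch_\e[w]$, $\Ch_\e[x]$, $\Ch_\e[y]$, $\Ch_\e[z]$ related by concatenation, with $\Ch_\e[z]=\Ch_\e[x]\Ch_\e[y]$ and $\Ch_\e[w]$ equal to either $\Ch_\e[y]\Ch_\e[x]^{-1}$ or $\Ch_\e[x]^{-1}\Ch_\e[y]$ up to the appropriate orientation; the point is that $XY$ and $X^{-1}Y$ (equivalently $W$ and $Z$) are the two ``diagonal'' words of the $\e$-Christoffel basis $(\Ch_\e[x],\Ch_\e[y])$. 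First I would record, via the law of Cosines \eqref{eqn:cosine} applied to this basis, the identity
\begin{align*}
\frac{\tr\ti{Z}}{\tr\ti{X}\tr\ti{Y}}
=\tfrac12\bigl(1+\tanh\eta_X\tanh\eta_Y\cosh\eta_Q\bigr),
\qquad
\frac{\tr\ti{W}}{\tr\ti{X}\tr\ti{Y}}
=\tfrac12\bigl(1-\tanh\eta_X\tanh\eta_Y\cosh\eta_Q\bigr),
\end{align*}
so that the comparison $|\tr\rho(w)|\le|\tr\rho(z)|$ is equivalent to $|1-\zeta|\le|1+\zeta|$ where $\zeta=\tanh\eta_X\tanh\eta_Y\cosh\eta_Q$, i.e. to $\operatorname{Re}\zeta\ge0$.

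Next I would show that $\operatorname{Re}\zeta\ge0$ for edges of sufficiently high level. By \eqref{eqn:tanh}, $\operatorname{Re}(\tanh\eta_X)>0$ and $\operatorname{Re}(\tanh\eta_Y)>0$ since $X$ and $Y$ are loxodromic with naturally oriented axes; moreover the Q-conditions force $\ell_\H(X)$ and $\ell_\H(Y)$ to be large when $\Lv[x,y]$ is large (combine Theorem~\ref{thm:bqpd} with \eqref{eqn:fib1}, \eqref{eqn:fib2} and the left inequality of \eqref{eqn:ltr}), so that $\tanh\eta_X$ and $\tanh\eta_Y$ are both close to a point of the unit circle with positive real part; hence their product has real part bounded below by a positive constant and is itself close to the unit circle. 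It therefore suffices to control the argument of $\cosh\eta_Q$, i.e. of $\cosh(\ell+i\theta(\rho,\f))$ with $\ell=\operatorname{Re}\eta_Q\ge0$: using $\cosh(\ell+i\theta)=\cosh\ell\cos\theta+i\sinh\ell\sin\theta$ one sees $\cosh\eta_Q$ lies in the right half-plane as long as $\cos\theta(\rho,\f)\ge0$, and more robustly the product $\zeta$ has positive real part once $\theta(\rho,\f)$ is bounded away from $\pi/2$ by enough to absorb the (small) arguments contributed by $\tanh\eta_X$ and $\tanh\eta_Y$. This is exactly the statement that for edges of high level the angle $\theta(\rho,\f)$ is close to $0$ (or close to $\pi$), which is the asymptotic degeneration of the right-angled hexagon alluded to after Figure~\ref{fig:torus2}; it is established from the Q-conditions using the amplitude identity \eqref{eqn:amplitude} together with $|\am(\tho,\f)|$ being a fixed constant (from \eqref{eqn:kappa}) and $|\sinh\eta_X|,|\sinh\eta_Y|$ blowing up like $e^{\ell_\H/2}$ as the level grows — so $|\sin\eta_Q|\to0$, forcing $\theta(\rho,\f)\to0$ or $\pi$. (This is essentially Lemma~\ref{lem:angle}, which I would either cite or reprove in this special case.)

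Finally I would assemble the estimate: choose $N_\rho$ large enough that for every Farey edge $[x,y]$ with $\Lv[x,y]\ge N_\rho$ we simultaneously have (a) $\operatorname{Re}(\tanh\eta_X\tanh\eta_Y)$ bounded below by a positive constant and $|\tanh\eta_X\tanh\eta_Y|$ within a small distance of $1$, and (b) $\theta(\rho,\f)$ within a correspondingly small distance of $\{0,\pi\}$ so that $\arg\cosh\eta_Q$ is small; then $\zeta=\tanh\eta_X\tanh\eta_Y\cosh\eta_Q$ has $\operatorname{Re}\zeta\ge0$, whence $|\tr\rho(w)|\le|\tr\rho(z)|$. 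One subtlety to handle carefully is orientation bookkeeping: $[w;x,y;z]$ is the notation for an arbitrary Farey edge, so one must first pass to the representative basis in which $[x,y]$ is co-directed with $[\e]$ and the Christoffel concatenation formulas hold, and check that the conclusion $|\tr\rho(w)|\le|\tr\rho(z)|$ — which only involves the well-defined moduli on $\V$ — is insensitive to that choice; this is immediate since $\Lv[w]\le k-1<k+1=\Lv[z]$ pins down which of the two triangles adjacent to $[x,y]$ is called $[w;x,y]$ and which $[x,y;z]$. I expect the main obstacle to be step (b): getting a \emph{uniform} lower bound on the level threshold $N_\rho$ beyond which $\theta(\rho,\f)$ is forced near $\{0,\pi\}$, which really relies on the quantitative ``finite attracting subtree'' behaviour of the Bowditch flow rather than on any soft argument, so in the write-up I would lean on Proposition-level inputs from \cite{Bow} and \cite{TWZ08} (and on the already-cited estimates \eqref{eqn:ltr}, \eqref{eqn:amplitude}, \eqref{eqn:kappa}) rather than rederiving the flow analysis.
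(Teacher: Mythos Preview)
Your proposal has a genuine circularity. In the paper, Proposition~\ref{prop:subtree} is \emph{not} proved from the hexagon/amplitude machinery; it is a restatement of the ``finite attracting subtree'' results of Bowditch \cite{Bow}*{Corollary 3.12 and Lemma 3.15} and Tan--Wong--Zhang \cite{TWZ08}*{Lemmas 3.21 and 3.24}, obtained from their direct analysis of the Markov-map flow on the topograph. That proposition is then \emph{used} as an input in the proof of Lemma~\ref{lem:angle}: the amplitude identity \eqref{eqn:amplitude} with $|\am|$ constant only forces $\sinh\eta_Q\to 0$, hence $\cosh\eta_Q\to\pm1$ and $\theta(\rho,\f)\to\{0,\pi\}$; it is precisely Proposition~\ref{prop:subtree} (via \eqref{eqn:re}) that rules out the accumulation point $-1$ for acute bases. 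So when you write ``this is essentially Lemma~\ref{lem:angle}, which I would either cite or reprove,'' you are invoking the very statement you are trying to establish.

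Concretely, your reduction to $\operatorname{Re}\zeta\ge0$ with $\zeta=\tanh\eta_X\tanh\eta_Y\cosh\eta_Q$ is fine, and the soft part of your step~(b) does give $\theta(\rho,\f)\to\{0,\pi\}$. But if $\theta(\rho,\f)\to\pi$ then $\cosh\eta_Q\to -\cosh(\operatorname{Re}\eta_Q)<0$, and since $\tanh\eta_X\tanh\eta_Y$ has positive real part you would get $\operatorname{Re}\zeta<0$, i.e.\ $|\tr\rho(w)|>|\tr\rho(z)|$, the reverse of what you want. Nothing in the amplitude identities alone excludes this alternative; that exclusion \emph{is} the content of the proposition. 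Your final paragraph essentially concedes this when you say you would ``lean on Proposition-level inputs from \cite{Bow} and \cite{TWZ08}'': those inputs are the proposition itself, so at that point the rest of your argument is superfluous. (A secondary issue: your claim that both $\ell_\H(X)$ and $\ell_\H(Y)$ are large when $\Lv[x,y]$ is large is false---for an edge of level $k$ one endpoint has level $k$ but the other can have level as low as $0$, e.g.\ $[a,a^kb]$---so the assertion that $\tanh\eta_X\tanh\eta_Y$ is close to the unit circle needs more care. But the circularity is the essential gap.)
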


Consider the level-$N_\rho$ partition (see Definition~\ref{def:partition}). Very roughly speaking, then the proposition implies that, for a representation $\rho$ satisfying the Q-conditions, if we start with a Farey triangle and run the process of minimizing the complex modulus of vertex traces then the process always ends up with only a finitely many Farey triangles that are of level $\le N_\rho$. Compare with the $\PSL(2,\R)$ case in the introduction: if the axes of two hyperbolic translations intersect, then the trace minimizing process always stops at an acute (or right-angled) triangle.

Let $(x,y)$ be an acute basis of $\F_2$ with respect to $\e=(a,b)$ so that the Farey edge $[x,y]$ determines the quadrilateral $[x^{-1}y;x,y;xy]$. (See Definition~\ref{def:acute}.) For an arbitrary lift $\tho$ to $\SL(2,\C)$ one can deduce the trace identity
\[
\tr\tho(x^{-1}y)+\tr\tho(xy)=\tr\tho(x)\tr\tho(y)
\]
from the Cayley-Hamilton theorem. (See, for example, \cite{Gol09}*{Theorem A and Section 2.2} for a proof.) Then the inequality in Proposition~\ref{prop:subtree} is easily seen to be equivalent to
\begin{align}\label{eqn:re}
\mathrm{Re}\left(\frac{\tr\tho(xy)}{\tr\tho(x)\tr\tho(y)}\right)
\ge\frac{1}{2}.
\end{align}

\subsection{Primitive stability implies the Q-conditions}\label{sec:psbq}
Although the converse to our Theorem~\ref{I} is well-known, we provide a proof for the sake of completeness. Our proof is essentially the same as the proof in \cite{DGLM}*{Proposition 4.0.5}.

\begin{proposition}\label{prop:psbq}
If $\rho:\F_2\to\PSL(2,\C)$ is primitive stable, then it satisfies the Q-conditions.
\end{proposition}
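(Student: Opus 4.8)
The plan is to establish both Q-conditions for a primitive stable $\rho$ directly from the quasi-geodesic estimate, using only elementary hyperbolic geometry and the structure of the Farey vertex set.

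\medskip

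\textbf{First Q-condition.} Fix the base point $o\in\H^3$ and constants $m,c>0$ witnessing primitive stability (Definition~\ref{def:altps}). Let $[x]\in\V$ and let $x$ be the cyclically reduced representative provided by the definition, so that $m\cdot d_\e(u,v)-c\le d_\H(\rho(u)(o),\rho(v)(o))$ for all $u,v$ on the axis $\Axis(x)\subset(\F_2,d_\e)$. Taking $u=1$ and $v=x^n$ for $n\ge1$ (both lie on $\Axis(x)$ since $x$ is cyclically reduced), the left side is $m\cdot n\,\ell_\e(x)-c$, while the right side is $d_\H(o,\rho(x)^n(o))$. If $\rho(x)$ were elliptic or parabolic, then $\rho(x)^n(o)$ stays in a bounded neighborhood of $o$ (elliptic case: it lies on a sphere about the fixed axis; parabolic case: $d_\H(o,\rho(x)^n(o))$ grows like $\log n$), contradicting the linear-in-$n$ lower bound since $\ell_\e(x)\ge1>0$. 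Hence $\rho(x)$ is loxodromic. This gives Q-condition~(i).

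\medskip

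\textbf{Second Q-condition.} Now that $\rho(x)$ is loxodromic for every $[x]\in\V$, I want to bound its real translation length below linearly in $\ell_\e(x)$, and then invoke \eqref{eqn:ltr} to convert this to the trace statement of Theorem~\ref{thm:bqpd}. Again with $u=1$, $v=x^n$: the point $\rho(x)^n(o)$ satisfies $d_\H(o,\rho(x)^n(o))\le n\,\ell_\H(\rho(x))+2d_\H(o,\Axis_{\rho(x)})$ but, more to the point, $d_\H(o,\rho(x)^n(o))\ge n\,\ell_\H(\rho(x))$ is false in general — instead one uses that $\rho(x)^n(o)$ moves essentially along the axis, so $d_\H(o,\rho(x)^n(o))\le n\,\ell_\H(\rho(x))+C_x$ for a constant depending on the distance from $o$ to the axis. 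Combined with the primitive stability lower bound $m\cdot n\,\ell_\e(x)-c\le d_\H(o,\rho(x)^n(o))$, dividing by $n$ and letting $n\to\infty$ yields $\ell_\H(\rho(x))\ge m\,\ell_\e(x)$. Then \eqref{eqn:ltr} gives $|\tr\tho(x)|\ge e^{\frac12\ell_\H(\rho(x))}-1\ge e^{\frac{m}{2}\ell_\e(x)}-1$, so that $\log|\tr\rho(x)|\ge \frac{m}{2}\ell_\e(x)-c'$ for a uniform $c'$ once $\ell_\e(x)$ is large enough; the finitely many remaining classes are harmless. By Theorem~\ref{thm:bqpd} this is exactly the second Q-condition, completing the proof.

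\medskip

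\textbf{Main obstacle.} The delicate point is the \emph{uniformity} of the constant $C_x$ (the defect coming from $o$ not lying on $\Axis_{\rho(x)}$) across all primitive classes $[x]$: a priori the axis of $\rho(x)$ could pass arbitrarily far from $o$. However, this defect is washed out by the limit $n\to\infty$ in the translation-length estimate, since we only need $\ell_\H(\rho(x))\ge m\,\ell_\e(x)$ asymptotically; the $n$-independent error does not survive division by $n$. The only genuinely global input is that a \emph{single} pair $(o,m,c)$ works for every primitive class simultaneously, which is precisely what Definition~\ref{def:altps} provides. Thus no serious obstacle remains; this is why the proposition is "well-known," and the argument above mirrors \cite{DGLM}*{Proposition 4.0.5}.
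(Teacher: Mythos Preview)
Your proof is correct and follows essentially the same route as the paper's: use the quasi-geodesic lower bound on the powers $x^n$, divide by $n$ and pass to the limit to obtain $\ell_\H(\rho(x))\ge m\,\ell_\e(x)$, and then convert to a trace estimate via \eqref{eqn:ltr}. The only cosmetic difference is that you finish by invoking Theorem~\ref{thm:bqpd} (its trivial direction), whereas the paper argues directly that $e^{\frac{m}{2}\ell_\e(x)}-1>2$ for all but finitely many classes since only finitely many $[x]\in\V$ have bounded $\ell_\e(x)=\Fi_\e[x]$.
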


\begin{proof}
Suppose $\rho$ is primitive stable. By Definition~\ref{def:altps} (with $v=1\in\F_2$) there exist a point $o\in\H^3$ and constants $m,c>0$ such that, for each unoriented primitive class $[x]\in\V$, there is a cyclically reduced representative $x$ such that
\begin{align*}
m|u|_\e-c\le d_\H(o,\rho(u)(o))
\end{align*}
for all $u\in\Axis(x)$. In particular, the $\rho$-orbit $\{\rho(x^n)(o)\mid n\in\Z\}$ of the point $o$ under the cyclic subgroup $\langle x\rangle$ generated by $x$ is a quasi-geodesic. This is the case only if $\rho(x)$ is loxodromic. Therefore, $\rho$ satisfies the first Q-condition.

Moreover, setting $u=x^n\in\Axis(x)$ in the above inequality, we have
\begin{align*}
m |x^n|_\e-c
\le d_\H(o,\rho(x^n)(o))
\end{align*}
for any $n\in\N$. Therefore, we see that
\begin{align*}
\ell_{\H}(\rho(x))
&\ge\lim_{n\to\infty}\frac{1}{n}d_\H(o,\rho(x^n)(o))\\
&\ge\lim_{n\to\infty}\frac{1}{n}\left(m |x^n|_\e-c\right)\\
&=m |x|_\e\\
&=m \cdot\ell_\e(x),
\end{align*}
where the first inequality (which is actually equality in the present case) follows from the triangle inequality and the fact that the limit on the right-hand side is independent of the point $o$. See, for example, \cite{CDP90}*{Section 10.6} or \cite{BH}*{Exercise II.6.6}. Since $|\tr\rho(x)|\ge e^{\frac{1}{2}\ell_{\H}(\rho(x))}-1$ by (\ref{eqn:ltr}), we have $|\tr\rho(x)|\ge e^{\frac{m}{2}\ell_\e(x)}-1$ for all classes $[x]\in\V$.

On the other hand, for $[x]\in\V$ we have $\ell_\e(x)=|\,\Ch_\e[x]\,|_\e=\Fi_\e[x]$ from \eqref{eqn:fib1} and \eqref{eqn:le} since $\Ch_\e[x]$ is cyclically reduced. Thus we can possibly have $e^{\frac{m}{2}\ell_\e(x)}-1\le2$, that is, $\Fi_\e[x]\le\frac{2}{m}\log3$ for only finitely many classes $[x]\in\V$; recall Definition~\ref{def:chris}(a) and Figure~\ref{fig:fibchris}. So we conclude that
\[
|\tr\rho(x)|\ge e^{\frac{m}{2}\ell_\e(x)}-1>2
\] for all but finitely many classes $[x]\in\V$, and thus $\rho$ satisfies the second Q-condition.
\end{proof}

\begin{remark}
Since $\H^3$ is Gromov $\de$-hyperbolic, we may use the Morse lemma (stability of quasi-geodesics) to give an alternative proof of the above proposition as in \cite{Lup}*{Proposition 2.9}. See also \cite{Can15}*{Proposition 2.1}. The above proof is stronger in that it requires no assumption on the metric space on which $\F_2$ acts isometrically.
\end{remark}

\section{The Q-conditions imply primitive stability}\label{sec:pf}

In this section we prove Theorem~\ref{I}. We first establish Lemma~\ref{lem:angle} as a consequence of Theorem~\ref{thm:bqpd} and Proposition~\ref{prop:subtree}. This lemma is crucial in that it enables us to consider the level-$N$ partition of $\V$ (Definition~\ref{def:partition}) for some $N>0$ and to focus only on one interval $\I_j$ thereof. This is done in Theorem~\ref{thm:main}, which is in fact slightly more general than what is needed for the proof of Theorem~\ref{I}, in the sense that, for such an interval $\I_j=\I(x,y)$, it deals not only with the primitive classes $[w]\in\I(x,y)$ but with \emph{all} positive words in $x$ and $y$. 

The proof of Theorem~\ref{I} starts in Section~\ref{sec:pf1}.

\subsection{Lemma on the angle \texorpdfstring{$\theta(\rho,\f)$}{theta(rho,f)}}

Suppose $\rho:\F_2\to\PSL(2,\C)$ satisfies the Q-conditions. It is known \cite{TWZ081}*{Theorem 1.4} that $\rho$ is irreducible. Since $\rho$ is irreducible and satisfies the first Q-condition, we can follow the discussion in Section~\ref{sec:amplitude}. In particular, the angle $\theta(\rho,\f)$ is defined for any basis $\f=(x,y)$ of $\F_2$. See \eqref{eqn:angle}.

The following lemma will be crucial in our proof of Theorem~\ref{I}. It says that, for an acute basis $\f$, its angle $\theta(\rho,\f)$ is bounded above by a quantity depending only on its level. The same statement also appears in \cite{TX18}*{Proposition 4.12} with a different proof. Recall Definition~\ref{def:acute} for acute bases.

\begin{lemma}\label{lem:angle}
Suppose $\rho:\F_2\to\PSL(2,\C)$ satisfies the Q-conditions. Let $\f=(x,y)$ be a basis of $\F_2$ that is acute relative to the basis $\e=(a,b)$. Then for any $\varepsilon\in(0,\pi)$ there exists an integer $N>0$ such that if $\Lv_\e[x,y]\ge N$ then $0\le\theta(\rho,\f)<\varepsilon$.
\end{lemma}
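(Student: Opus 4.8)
The plan is to translate the trace estimates provided by Theorem~\ref{thm:bqpd} and Proposition~\ref{prop:subtree} into a bound on the angular part $\theta(\rho,\f)=|\mathrm{Im}\,\eta_Q|$ via the Law of Cosines identity \eqref{eqn:cosine} and the amplitude identity \eqref{eqn:amplitude}. The key observation is that for an acute basis $\f=(x,y)$, the Farey edge $[x,y]$ determines the quadrilateral $[x^{-1}y;x,y;xy]$, and we have the Cayley--Hamilton relation $\tr\tho(x^{-1}y)+\tr\tho(xy)=\tr\tho(x)\tr\tho(y)$. First I would fix $\varepsilon\in(0,\pi)$ and use Proposition~\ref{prop:subtree} together with Theorem~\ref{thm:bqpd}: there is a constant $N_\rho$ so that once $\Lv_\e[x,y]\ge N_\rho$ the inequality \eqref{eqn:re} holds, i.e. $\mathrm{Re}\bigl(\tr\tho(xy)/(\tr\tho(x)\tr\tho(y))\bigr)\ge 1/2$. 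Combining \eqref{eqn:re} with \eqref{eqn:cosine} gives $\mathrm{Re}\bigl(\tanh\eta_X\tanh\eta_Y\cosh\eta_Q\bigr)\ge 0$ — but I will want much more than nonnegativity, so I would instead extract a lower bound bounded away from $0$, or rather combine it with an upper bound on the left side of the same identity.

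The heart of the argument is to show $|\cosh\eta_Q|$ (equivalently $|\cos\theta(\rho,\f)|$, after controlling the real part $\ell_\H$-like contribution) stays close to its maximum. Here is the mechanism I expect to use. From \eqref{eqn:amplitude}, $|\am(\tho,\f)|=|\sinh\eta_X||\sinh\eta_Y||\sinh\eta_Q|$, and $|\am(\tho,\f)|$ is a \emph{constant} independent of $\f$ by \eqref{eqn:kappa}. The Q-conditions force $|\tr\tho(x)|$ and $|\tr\tho(y)|$ (hence $|\cosh\eta_X|$, $|\cosh\eta_Y|$, hence $|\sinh\eta_X|$, $|\sinh\eta_Y|$) to grow without bound as $\Lv_\e[x,y]\to\infty$: indeed, for an acute basis of high level, both $x$ and $y$ are themselves primitive of large $\ell_\e$ (this uses the structure of Christoffel bases, since the endpoints $[x],[y]$ of a high-level edge have high Fibonacci value, cf. \eqref{eqn:fib2} and \eqref{eqn:fib1}), so Theorem~\ref{thm:bqpd} gives $|\tr\rho(x)|,|\tr\rho(y)|\to\infty$. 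Therefore $|\sinh\eta_Q|=|\am(\tho,\f)|/(|\sinh\eta_X||\sinh\eta_Y|)\to 0$ as $\Lv_\e[x,y]\to\infty$. Writing $\eta_Q=\ell_Q+i\theta_Q$ with $\theta_Q=\pm\theta(\rho,\f)$, we have $|\sinh\eta_Q|^2=\sinh^2\ell_Q+\sin^2\theta_Q$, so $|\sinh\eta_Q|\to 0$ forces both $\ell_Q\to 0$ and $\sin\theta(\rho,\f)\to 0$. This last conclusion says $\theta(\rho,\f)$ is close to $0$ \emph{or} close to $\pi$, so it remains to rule out the value near $\pi$.

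To eliminate the case $\theta(\rho,\f)$ near $\pi$, I would return to inequality \eqref{eqn:re}/\eqref{eqn:cosine}: it gives $\mathrm{Re}\bigl(\tanh\eta_X\tanh\eta_Y\cosh\eta_Q\bigr)\ge 0$. Since $\mathrm{Re}(\eta_X),\mathrm{Re}(\eta_Y)>0$ (the $\Axis_X,\Axis_Y$ are oriented naturally, $X,Y$ loxodromic) and moreover $\mathrm{Re}(\eta_X),\mathrm{Re}(\eta_Y)\to\infty$, the factors $\tanh\eta_X,\tanh\eta_Y$ tend to $1$; in particular their arguments tend to $0$, so $\arg(\tanh\eta_X\tanh\eta_Y)\to 0$. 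Meanwhile $\cosh\eta_Q=\cosh\ell_Q\cos\theta_Q+i\sinh\ell_Q\sin\theta_Q$; since $\ell_Q\to 0$ this is $\approx\cos\theta_Q$, which is real. So \eqref{eqn:re} becomes, asymptotically, $\cos\theta(\rho,\f)\cdot(\text{positive number near }|\tanh\eta_X\tanh\eta_Y|)\ge -o(1)$, forcing $\cos\theta(\rho,\f)\ge -o(1)$, hence $\theta(\rho,\f)$ cannot approach $\pi$. Combined with $\sin\theta(\rho,\f)\to 0$ this yields $\theta(\rho,\f)\to 0$, giving the quantitative statement: there is $N$ (depending on $\rho$ and $\varepsilon$) with $\Lv_\e[x,y]\ge N\Rightarrow 0\le\theta(\rho,\f)<\varepsilon$.

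The main obstacle I anticipate is making the step ``$|\tr\rho(x)|,|\tr\rho(y)|\to\infty$ as $\Lv_\e[x,y]\to\infty$'' fully rigorous and uniform: one must check that along \emph{any} sequence of acute bases of increasing level, \emph{both} endpoint classes have $\ell_\e\to\infty$, which requires knowing that a vertex $[x]$ of a level-$k$ acute edge has level (and hence, by \eqref{eqn:fib2} and \eqref{eqn:fib1}, cyclically reduced length) growing with $k$ — this follows from the combinatorics of levels summarized before Definition~\ref{def:acute}, but needs to be invoked carefully. A secondary technical point is bookkeeping the $\pm$ ambiguities: $\eta_Q$ is defined only up to sign and $\eta_X,\eta_Y$ up to simultaneous sign change, but since all the quantities entering the final bound — $|\sinh\eta_Q|$, $\theta(\rho,\f)=|\mathrm{Im}\,\eta_Q|$, and the real part of the ratio in \eqref{eqn:re} — are manifestly independent of these choices (as already noted around \eqref{eqn:angle} and \eqref{eqn:re}), the argument is unaffected, but I would state this explicitly to avoid confusion.
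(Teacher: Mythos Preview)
Your overall strategy---use the constancy of $|\am(\tho,\f)|$ to force $|\sinh\eta_Q|\to 0$, then use \eqref{eqn:re} to exclude $\theta(\rho,\f)\approx\pi$---is exactly the paper's. But there is a genuine error in the step you flag as an ``obstacle'': the assertion that \emph{both} endpoint classes $[x],[y]$ of a level-$k$ edge have level (hence $\ell_\e$) tending to infinity is false. A Farey edge of level $k$ has one vertex of level $k$ and one of level $j\le k-1$, with \emph{no} lower bound on $j$; witness the edges $[a,a^nb]$, all of which contain $[a]$ at level $0$. So $\ell_\e(x)$ (for the low-level vertex) need not grow, $|\tr\rho(x)|$ need not diverge, and in particular $\tanh\eta_X$ need not tend to $1$.

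The repair, which is what the paper does, is asymmetric. Assume $\Lv_\e[x]<\Lv_\e[y]$, so $\Lv_\e[x,y]=\Lv_\e[y]$. Then only $|\sinh\eta_Y|\to\infty$ via Theorem~\ref{thm:bqpd}. For the other factor one needs only a \emph{uniform lower bound} $|\sinh\eta_X|\ge\ep_\rho>0$ over all primitive $[x]$, which follows because the Q-conditions force the trace set $\{\pm\tr\rho(x):[x]\in\V\}$ to be discrete and to avoid $\pm2$. This is already enough to conclude $|\sinh\eta_Q|\to0$. For the $\pi$-exclusion, you can no longer appeal to $\tanh\eta_X\to1$; instead use only $\mathrm{Re}(\tanh\eta_X)>0$ (from \eqref{eqn:tanh}) together with $\tanh\eta_Y\to1$: if along some subsequence of acute bases $\cosh\eta_{Q_i}\to-1$, then $\tanh\eta_{Y_i}\cosh\eta_{Q_i}\to-1$, and the real part of $\tanh\eta_{X_i}\cdot(\tanh\eta_{Y_i}\cosh\eta_{Q_i})$ is eventually negative, contradicting \eqref{eqn:re}. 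With this correction your argument becomes the paper's.
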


\begin{proof}
Let $\f=(x,y)$ be a basis with $\Lv_\e[x,y]>0$. Without loss of generality, we may assume that $\Lv_\e[x]<\Lv_\e[y]$. (The other case $\Lv_\e[x]>\Lv_\e[y]$ can be dealt with in a similar fashion switching the roles of $x$ and $y$.) Then we have
\[
\Lv_\e[x,y]=\Lv_\e[y].
\]
See Figure~\ref{fig:level}(Right). Furthermore, by \eqref{eqn:fib1}, \eqref{eqn:fib2} and \eqref{eqn:le}, we have the inequality $\ell_\e(y)\ge\Lv_\e[y]+1$ as well.

We now proceed as in Section~\ref{sec:amplitude} and employ the same notations therein. We take a lift $\tho$ to $\SL(2,\C)$ and obtain the oriented right-angled hexagon $\hex(\tho,\f)$ by choosing a set of lifts of $P,Q,R$ accordingly. Then, as we observed in \eqref{eqn:amplitude} and \eqref{eqn:kappa}, the number
\[
|\am(\tho,\f)|=|\sinh\eta_X\sinh\eta_Y\sinh\eta_Q|
\]
is a constant which depends only on $\rho$.

Below we shall adopt the following terminology: if a number $a(\f)$ is a function of a variable basis $\f$, by \emph{uniform} convergence (or divergence) of the number $a(\f)$ we mean that the convergence is controlled by a quantity depending only on the level $\Lv_\e[\f]$ of $\f$.

We first note that
\[
\sinh\eta_X=0\quad\Longleftrightarrow\quad\eta_X=0\textup{ or }i\pi\quad\Longleftrightarrow\quad\tr\ti{X}=2\cosh\eta_X=\pm2.
\]
Since $\rho$ satisfies the Q-conditions, we know from Theorem~\ref{thm:bqpd} that the set $\{\tr\ti{X} \mid [x]\in\V\}$ does not accumulate at $\pm2$. Thus there exists a uniform bound $\ep_\rho>0$ such that $|\sinh\eta_X|\ge\ep_\rho$ for all $[x]\in\V$. On the other hand, by Theorem~\ref{thm:bqpd} again, we have 
\begin{align}\label{eqn:coshy}
\log|2\cosh\eta_Y|=\log|\tr\ti{Y}|\ge m \cdot\ell_\e(y)-c\ge m(\Lv_\e[y]+1)-c
\end{align}
for some uniform constants $m,c>0$. So $|\sinh\eta_Y|$, as well as $|\cosh\eta_Y|$, diverges uniformly to infinity as $\Lv_\e[x,y]=\Lv_\e[y]$ tends to infinity. In conclusion, we have
\[
|\sinh\eta_Q|
=\frac{|\am(\tho,\f)|}{|\sinh\eta_X\sinh\eta_Y|}
\le\frac{|\am(\tho,\f)|}{\ep_\rho|\sinh\eta_Y|}
=\frac{\textup{Const.}}{|\sinh\eta_Y|},
\]
and thus $\sinh\eta_Q$ must converge uniformly to $0$ as $\Lv_\e[x,y]\to\infty$.

This means that the possible accumulation points of the set
\[
\{ \cosh\eta_Q \mid \f\textup{ is a basis of }\F_2 \}
\]
are $1$ and $-1$. However, we claim that if we consider only the bases that are \emph{acute} relative to $\e$ then $-1$ cannot be an accumulation point of the corresponding subset.

To see this, suppose on the contrary that there is a sequence of acute bases $\{\f_i=(x_i,y_i)\}_i$ such that $\cosh\eta_{Q_i}$ converges to $-1$ as $i\to\infty$. By passing to a subsequence we may further assume, without loss of generality, that $\Lv_\e[x_i]<\Lv_\e[y_i]$ for all $i$. Now recall the equation \eqref{eqn:cosine}:
\[
\frac{2\,\tr\ti{Z_i}}{\tr\ti{X_i}\tr\ti{Y_i}}
=1+\tanh\eta_{X_i}\tanh\eta_{Y_i}\cosh\eta_{Q_i},
\]
where $\ti{Z_i}=(\ti{X_i}\ti{Y_i})^{-1}$. Note from \eqref{eqn:tanh} that $\mathrm{Re}(\tanh\eta_{X_i})>0$, and from \eqref{eqn:coshy} that $\tanh\eta_{Y_i}$ converges to $1$.
Thus we have $\mathrm{Re}\left(\frac{\tr\ti{Z_i}}{\tr\ti{X_i}\tr\ti{Y_i}}\right)
<\frac{1}{2}$ for all sufficiently large $i$. But this contradicts Proposition~\ref{prop:subtree} and the inequality \eqref{eqn:re}, since we would eventually have $\Lv[x_i,y_i]\ge N_\rho$ for large $i$.

Therefore, we conclude that the set 
\[
\{ \cosh\eta_Q \mid \f\textup{ is an acute basis of }\F_2\textup{ relative to }\e\}
\]
has a unique accumulation point $1$. This means that, for an acute basis $\f=(x,y)$, the width $\eta_Q$, as well as $\theta(\rho,\f)=|\mathrm{Im}\,\eta_Q|$, converges uniformly to $0$ as $\Lv_\e[x,y]\to\infty$.
\end{proof}

\begin{remark}
By a similar argument one can show that the widths
\[
\eta(\Axis_X,\Axis_Z)\;\textup{ and }\;\eta(\Axis_Z,\Axis_Y)
\]
both converge uniformly to $\pi i\in\A=\C/2\pi i\Z$ as $\Lv_\e[x,y]\to\infty$. See also \cite{TX18}*{Proposition 4.12}. In this sense we may say that, as $\Lv_\e[\f]\to\infty$, the hexagon $\hex(\rho,\f)$ asymptotically becomes a ``degenerate obtuse triangle" as in the case of $\PSL(2,\R)\cong\Isom^+(\H^2)$ illustrated in the introduction.
\end{remark}

\subsection{Positive words in a high level Christoffel basis}

The following theorem \emph{is} our main theorem and contains the most general statement, from which Theorem~\ref{I} will follow almost immediately. 

\begin{theorem}\label{thm:main}
Suppose $\rho:\F_2\to\PSL(2,\C)$ satisfies the Q-conditions. Then there exists a number $N>0$ with the following property: if $\f=(x,y)$ is an $\e$-Christoffel basis of $\F_2$ with $\Lv_\e[x,y]\ge N$ then there exist a point $o_\f\in\H^3$ and constants $m_\f,c_\f>0$ such that, for every positive word $w$ in $\{x,y\}$, the inequality
\begin{align*}
m_\f\cdot d_\e(u,v)-c_\f\le d_\H(\rho(u)(o_\f),\rho(v)(o_\f))
\end{align*}
holds for all $u,v\in\Axis(w)$.
\end{theorem}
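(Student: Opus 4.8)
The plan is to reduce the quasi-geodesic inequality along $\Axis(w)$ to an estimate valid for a \emph{single} step of the word $w$, using the Coxeter extension and the law of Cosines. Since $\f=(x,y)$ is an $\e$-Christoffel basis, Lemma~\ref{lem:chris} tells us every positive word $w$ in $\{x,y\}$ is cyclically reduced in $\{a,b\}$, and by Lemma~\ref{lem:axis} its axis $\Axis^\f(w)\subset(\F_2,d_\f)$ is contained in $\Axis(w)\subset(\F_2,d_\e)$ and consists of consecutive $\{x,y\}$-steps. Because $d_\e$ and $d_\f$ are bi-Lipschitz equivalent when restricted to words that are positive in $\{x,y\}$ (the constant depending on the word lengths $|x|_\e,|y|_\e$), it suffices to prove the lower bound with $d_\e$ replaced by $d_\f$, i.e. to show that the $\tau_{\rho,o_\f}$-image of $\Axis^\f(w)$ is a uniform quasi-geodesic in $\H^3$, where both the base point $o_\f$ and the constants are allowed to depend on $\f$.

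The key geometric input is Lemma~\ref{lem:angle}: choosing $N$ large enough, for any $\e$-Christoffel basis $\f=(x,y)$ with $\Lv_\e[x,y]\ge N$ the associated angle $\theta(\rho,\f)$ is as small as we wish, and moreover by Theorem~\ref{thm:bqpd} the translation lengths $\ell_\H(X)$ and $\ell_\H(Y)$ are large (bounded below in terms of $N$). First I would take the base point $o_\f$ to be the foot of the common perpendicular $\Axis_Q$ on, say, $\Axis_X$ — more precisely a point on $\Axis_Q$, which meets both $\Axis_X$ and $\Axis_Y$ orthogonally. Then the broken geodesic path $o_\f \to \rho(x)(o_\f) \to \rho(xy)(o_\f) \to \cdots$ obtained by alternately applying $\rho(x)$ and $\rho(y)$ is a concatenation of geodesic segments whose lengths are comparable to $\ell_\H(X)$ or $\ell_\H(Y)$ (hence uniformly large), and the exterior angle at each breakpoint is controlled by $\theta(\rho,\f)$ together with the ultra-parallel separation coming from Lemma~\ref{lem:parallel_angle}: since $\ell_\H(X),\ell_\H(Y)$ are large, the planes orthogonal to $\Axis_X$ (resp. $\Axis_Y$) at the relevant feet and their $X$- (resp. $Y$-)translates are ultra-parallel, which forces the broken path to make definite progress and never backtrack. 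This is precisely the setup of a standard "local-to-global" / thin-triangles argument for piecewise-geodesics with long segments and small angles: one shows such a path is a $(\lambda,\epsilon)$-quasi-geodesic in the $\delta$-hyperbolic space $\H^3$ with $\lambda,\epsilon$ depending only on the lower bound for the segment lengths and the upper bound for the angles, both of which are functions of $N$ alone.

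Concretely, the chain of nested half-turn axes and perpendicular planes produced by the Coxeter extension along the word $w = s_1 s_2 \cdots s_n$ ($s_i\in\{x,y\}$) gives a sequence of planes $\h_0,\h_1,\dots,\h_n$ in $\H^3$, each separating the piece of the path before it from the piece after it (this is where Lemma~\ref{lem:parallel_angle} and the largeness of $\ell_\H(X),\ell_\H(Y)$ enter, guaranteeing consecutive planes are ultra-parallel, hence that the $\h_i$ are pairwise disjoint and linearly ordered). Disjoint linearly-ordered separating planes immediately yield $d_\H(\rho(g_i)(o_\f),\rho(g_j)(o_\f)) \ge \sum_{k} (\text{gap between }\h_{k}\text{ and }\h_{k+1})$ for $i<j$ along $\Axis^\f(w)$, and each gap is bounded below by a uniform constant (again a function of $N$), which gives the desired linear lower bound $m_\f\, d_\f(u,v) - c_\f$; translating back via the bi-Lipschitz comparison of $d_\f$ and $d_\e$ on positive words finishes the proof. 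I expect the main obstacle to be the bookkeeping that turns "small angle $\theta(\rho,\f)$ plus long translation length" into the clean ultra-parallel/separation statement uniformly in the word $w$ — in particular verifying that the relevant planes can be chosen consistently along the whole infinite axis $\Axis^\f(w)$ and that the half-turn $R$ (which does not appear symmetrically in $\psi_\f$) does not spoil the alternation; once the separating family $\{\h_i\}$ is in place, the quasi-geodesic estimate is essentially formal.
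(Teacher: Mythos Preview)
Your separating-planes approach is exactly the paper's, and the outline---choose a plane $\h$ through $\Axis_Q$, show that $\rho(g)^{-1}(\h),\h,\rho(h)(\h)$ are pairwise ultra-parallel with $\h$ separating the other two for any $g,h\in\{x,y\}$, then sum the gaps along $\Axis^\f(w)$ and convert back to $d_\e$ via $L=\max\{|x|_\e,|y|_\e\}$---is correct.

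There is one genuine oversight. You assert that ``by Theorem~\ref{thm:bqpd} the translation lengths $\ell_\H(X)$ and $\ell_\H(Y)$ are large (bounded below in terms of $N$).'' This is false: a Christoffel basis $\f=(x,y)$ with $\Lv_\e[x,y]=N$ has one vertex of level $N$ and one of level $\le N-1$, and the low-level vertex can have level $0$ (e.g.\ $\f=(a,a^Nb)$). So only one of $\ell_\H(X),\ell_\H(Y)$ is forced large by the level. This matters precisely at the step where you invoke Lemma~\ref{lem:parallel_angle}: if you try to use a plane meeting $\Axis_X$ at angle $\pi/2-\theta(\rho,\f)$, you need $\alpha(\ell_\H(X)/2)<\pi/2-\theta(\rho,\f)$, and there is no uniform reason for this when $\ell_\H(X)$ is small. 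The paper resolves this asymmetry by choosing $\h$ \emph{perpendicular} to the axis of the \emph{lower}-level generator (say $X$): then $\h$ and $\rho(x)(\h)$ are ultra-parallel for free (both orthogonal to $\Axis_X$), while $\h$ meets $\Axis_Y$ at angle $\pi/2-\theta(\rho,\f)>\pi/4$, and since $\Lv_\e[y]=\Lv_\e[x,y]\ge N$ the translation length $\ell_\H(Y)$ \emph{is} large, so Lemma~\ref{lem:parallel_angle} applies to $Y$. With this single correction your argument goes through.

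Your closing worry about the half-turn $R$ ``spoiling the alternation'' is misplaced: $R$ plays no role once you have the single plane $\h\supset\Axis_Q$; the only asymmetry to manage is the one between $x$ and $y$ just described.
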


\begin{proof}
Suppose $\rho$ satisfies the Q-conditions. By Lemma~\ref{lem:angle} we can take $N_1>0$ large enough that we have $0\le\theta(\rho,\f)<\pi/4$ for every acute basis $\f=(x,y)$ with $\Lv_\e[x,y]\ge N_1$.

By Theorem~\ref{thm:bqpd}, there are constants $m, c>0$ such that $\log|\tr\rho(w)| \ge m \cdot\ell_\e(w) -c$
for all $[w]\in\V$. We have from \eqref{eqn:fib1} and \eqref{eqn:fib2} that $\ell_\e(w)=|\,\Ch_\e[w]\,|_\e=\Fi_\e[w]\ge \Lv_\e[w]+1$. Note also from \eqref{eqn:ltr} that $\log|\tr\rho(w)|<\frac{1}{2}\ell_\H(\rho(w))+1$. Thus we see that
\[
\frac{1}{2}\ell_{\H}(\rho(w))
> \log|\tr\rho(w)| -1
\ge m\cdot\ell_\e(w) -(c+1)
\ge m (\Lv_\e[w]+1) -(c+1)  
\]
for all $[w]\in\V$. Take $N_2>0$ big enough so that if $\Lv_\e[w]\ge N_2$ then the parallel angle \eqref{eqn:parallel_angle} of the half translation length $\frac{1}{2}\ell_{\H}(\rho(w))$ is less than $\pi/4$.

Set $N=\max\{N_1,N_2\}$.

Suppose $\f=(x,y)$ is an $\e$-Christoffel basis and $\Lv_\e[x,y]\ge N$. In particular, $\f$ is acute relative to $\e$ (see the discussion preceding Definition~\ref{def:acute}). Without loss of generality, we may further assume $\Lv_\e[x]<\Lv_\e[y]$ so that $\Lv_\e[x,y]=\Lv_\e[y]$.

Proceeding as in Section~\ref{sec:amplitude} we take a lift $\tho$ to $\SL(2,\C)$ and obtain the oriented right-angled hexagon $\hex(\tho,\f)$ by choosing a set of lifts of $P,Q,R$ accordingly. Take the (totally geodesic) plane $\h\subset\H^3$ which contains the geodesic $\ti{Q}$ and is perpendicular to $\Axis_X$. See Figure~\ref{fig:hyperplane}. 

\begin{figure}[ht]
\labellist
\pinlabel {$\h$} at 300 346
\pinlabel \rotatebox{45}{$\rho(x)(\h)$} at 50 350
\pinlabel \rotatebox{-20}{$\rho(y)^{-1}(\h)$} at 550 354
\pinlabel {$\ti{P}$} at 160 160 
\pinlabel {$\ti{Q}$} at 280 90
\pinlabel {$\ti{R}$} at 450 176 
\pinlabel \rotatebox{-10}{$\Axis_X$} at 210 180 
\pinlabel \rotatebox{12}{$\Axis_Y$} at 360 170 
\pinlabel \rotatebox{5}{$\Axis_{XY}$} at 314 260
\pinlabel {(Top view along $\ti{Q}$)} at 650 30
\pinlabel {$\theta(\rho,\f)$} at 480 60
\pinlabel {$\theta'$} at 580 90
\pinlabel {$\h$} at 540 120
\endlabellist
\centering
\includegraphics[width=\textwidth]{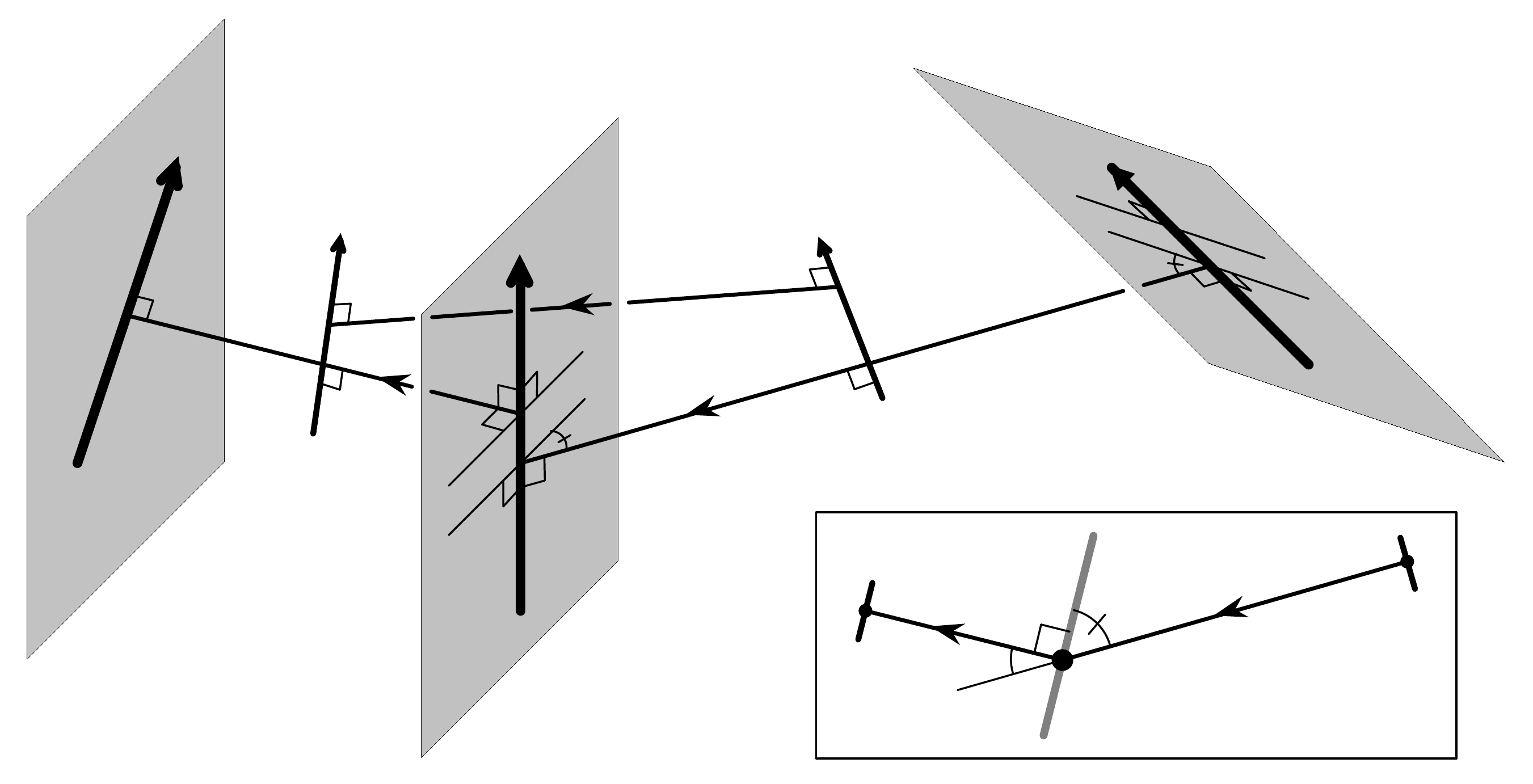}
\caption{Taking the plane $\h$.}
\label{fig:hyperplane}
\end{figure}

Since the planes $\h$ and $\rho(x)(\h)$ are both orthogonal to $\Axis_X$, they are ultra-parallel. Since $\Lv_\e[y]=\Lv_\e[x,y]\ge N_2$, the parallel angle of $\frac{1}{2}\ell_{\H}(\rho(y))$ is less than $\pi/4$. Since $\Lv_\e[x,y]\ge N_1$ and $\f$ is acute relative to $\e$, we have $0\le\theta(\rho,\f)<\pi/4$. This means that the angle $\theta'$ between $\h$ and $\Axis_{Y}$ is greater than $\pi/4$. By Lemma~\ref{lem:parallel_angle} we see that the planes $\h$ and $\rho(y)^{-1}(\h)$ are also ultra-parallel. Everything combined, we can conclude that for any choice of $g,h\in\{x,y\}$ the three planes
\begin{align}\label{eqn:triple}
\rho(g)^{-1}(\h),\; \h,\; \rho(h)(\h)
\end{align}
are pairwise ultra-parallel and $\h$ separates the other two.

Now we choose an \emph{arbitrary} point $o\in \h$ as a base point and consider the associated orbit map $\tau_{\rho,o}$. Let $w$ be a positive word in $\{x,y\}$. From Lemma~\ref{lem:axis} we know that the axis $\Axis(w)\subset(\F_2,d_\e)$ properly contains the set $\Axis^\f(w)=\{g_i \mid i\in\Z\}$, where $g_0=1$ and $g_i^{-1}g_{i+1}\in\{x,y\}$ for all $i\in\Z$. See Figure~\ref{fig:axis}.

We claim that the points in the image $\tau_{\rho,o}(\Axis^\f(w))$ in $\H^3$ belong to various translates of $\h$ which are pairwise ultra-parallel and \emph{well-ordered}. In order to see this, note that any three consecutive points in $\Axis^\f(w)$ are of the form $(g_ig^{-1},\,g_i,\,g_ih)$ for $g,h\in\{x,y\}$. Under the orbit map $\tau_{\rho,o}$, such a triple is mapped to $(\rho(g_i)\rho(g)^{-1}(o),\,\rho(g_i)(o),\,\rho(g_i)\rho(h)(o))$. Since $o\in \h$, we see that, up to isometry $\rho(g_i)$, these three points in this order belong to the three planes in (\ref{eqn:triple}), respectively. The claim is proved.

For an illustration of the claim, see Figure~\ref{fig:orbit}, where the various translates of $\h$ are shown to be well-ordered in the case of the positive (Christoffel) word $w=x^2yxy$.

\begin{figure}[ht]
\labellist
\pinlabel {$\rho(x)$} at 380 110
\pinlabel {$\rho(y)$} at 204 160
\pinlabel {$o$} at 330 186
\pinlabel {$\h$} at 310 240
\pinlabel \rotatebox{52}{$\rho(y^{-1})(\h)$} at 240 270
\pinlabel \rotatebox{66}{$\rho(x)(\h)$} at 436 240
\pinlabel \rotatebox{66}{$\rho(x^2)(\h)$} at 560 240
\pinlabel \rotatebox{-15}{$\rho(x^2y)(\h)$} at 720 280
\pinlabel \rotatebox{-15}{$\rho(x^2yx)(\h)$} at 830 320
\endlabellist
\centering
\includegraphics[width=\textwidth]{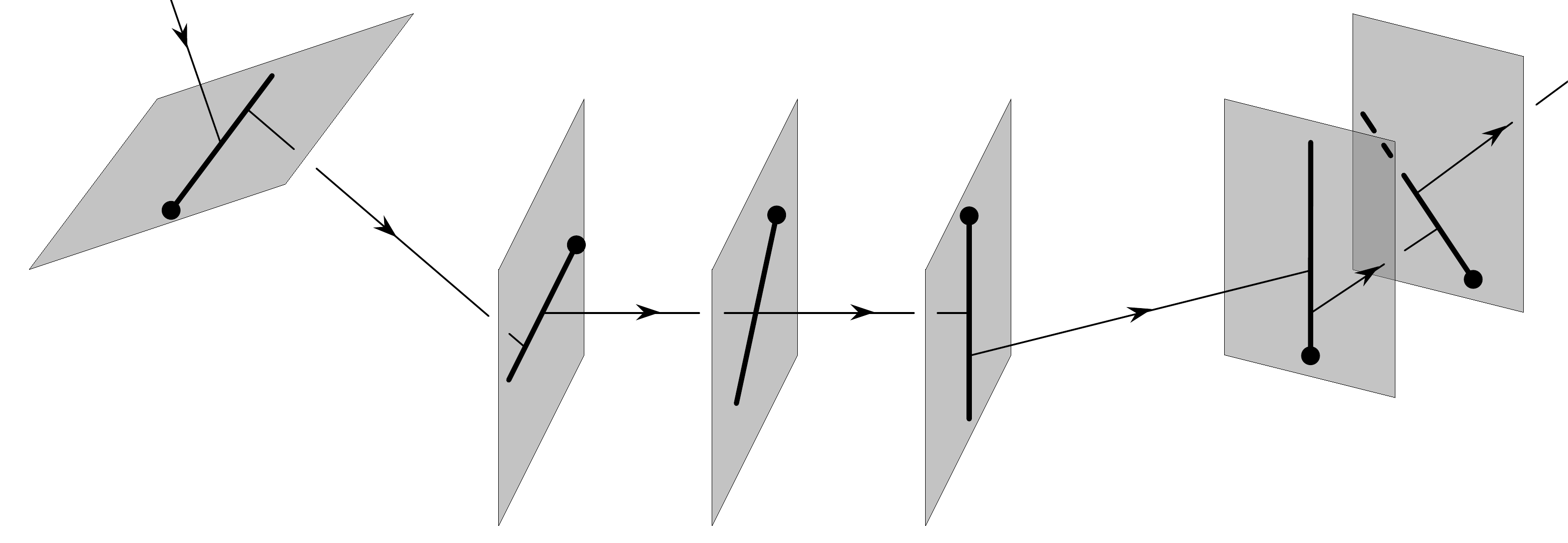}
\caption{An example for the positive word $w=x^2yxy$.}
\label{fig:orbit}
\end{figure}

We set the following positive constants:
\begin{align*}
d&=\min\{d_\H(\h,\rho(x)(\h)),d_\H(\h,\rho(y)(\h))\},\\
D&=\max\{d_\H(o,\rho(a)(o)),d_\H(o,\rho(b)(o))\},\\
L&=\max\{|x|_\e,|y|_\e\}.
\end{align*}
These constants do not depend on $w$, but they depend only on the representation $\rho$, the bases $\e=(a,b)$ and $\f=(x,y)$, and the chosen point $o\in \h$. Let $u,v\in\Axis(w)$. Then there are indices $j,k\in\Z$ such that $u$ and $v$ lie in the subintervals $[g_{j-1},g_j]$ and $[g_k,g_{k+1}]$ of $\Axis(w)$, respectively. Without loss of generality we may assume $j-1\le k$. Then $u=g_{j-1}s$ and $v=g_kt$ where $s$ and $t$ are certain subwords either of $x$ or of $y$. See Figure~\ref{fig:axis}.

\begin{figure}[ht]
\labellist
\pinlabel {$1$} at 386 54
\pinlabel {$g_{j-1}$} at 80 16
\pinlabel {$g_{j}$} at 170 40
\pinlabel {$g_{-1}$} at 310 70
\pinlabel {$g_1$} at 480 76
\pinlabel {$g_{k}$} at 610 102
\pinlabel {$g_{k+1}$} at 690 82
\pinlabel {$u$} at 110 62
\pinlabel {$v$} at 646 128
\endlabellist
\centering
\includegraphics[width=\textwidth]{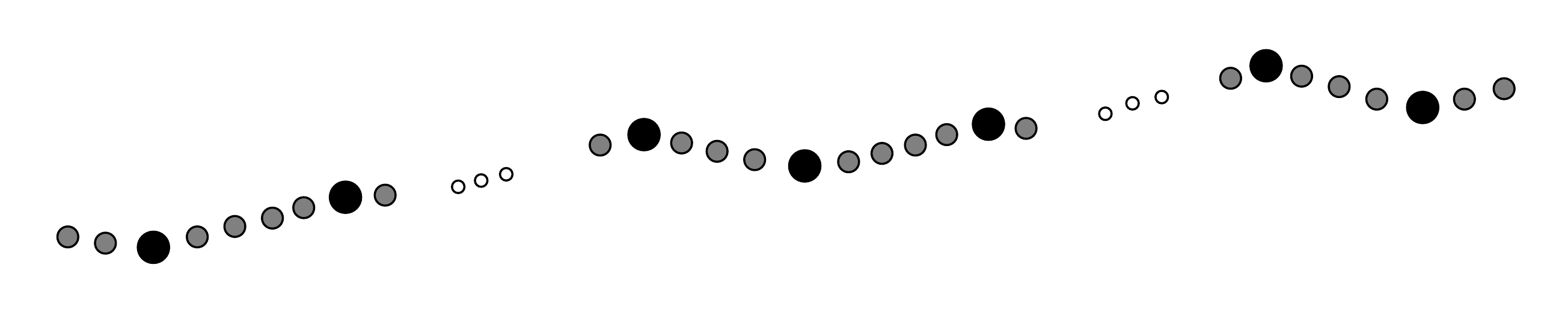}
\caption{The discrete geodesic $\Axis(w)$ in $(\F_2,d_\e)$.}
\label{fig:axis}
\end{figure}

From the triangle inequality, we have
\begin{align*}
d_{\H}&(\rho(u)(o),\rho(v)(o)) + 2 D L\\
&\ge d_{\H}(\rho(g_j)(o),\rho(u)(o)) + d_{\H}(\rho(u)(o),\rho(v)(o)) + d_{\H}(\rho(v)(o),\rho(g_k)(o))\\
&\ge d_{\H}(\rho(g_j)(o),\rho(g_k)(o)),
\end{align*}
and thus
\begin{align*}
d_{\H}(\rho(u)(o),\rho(v)(o))
&\ge d_{\H}(\rho(g_j)(o),\rho(g_k)(o)) - 2 D L\\
&\ge d(k-j) - 2 D L\\
&\ge \frac{d}{L}d_\e(u,v) - (2 d + 2 D L), 
\end{align*}
since $L(k-j+2)\ge d_\e(g_{j-1},g_{k+1})\ge d_\e(u,v)$. The proof is complete if we set $o_\f=o$, $m_\f=d/L$ and $c_\f=2d+2 D L$.
\end{proof}

\subsection{Proof of Theorem~\ref{I}}\label{sec:pf1}

\begin{proof}[Proof of Theorem I]
Suppose $\rho$ satisfies the Q-conditions. In order to show $\rho$ is primitive stable, choose any base point $o\in\H^3$ and consider the associated orbit map $\tau_{\rho,o}$.

Take the number $N>0$ guaranteed by Theorem~\ref{thm:main} and consider the level-$N$ partition of $\V$ (Definition~\ref{def:partition}):
\[
\V=\V_{\le N}\cup\I_1\cup\I_2\cup\cdots\I_{2^{N+1}}.
\]
Recall that each interval $\I_i$ is of the form $\I_i=\I[x_i,y_i]$ and $\Lv_\e[x_i,y_i]=N$ for $1\le i\le2^{N+1}$.

Fix an index $i$ ($1\le i\le2^{N+1}$) and let $[w]\in\I_i=\I[x_i,y_i]$. Then
the vertex $[w]$ has a representative $\Ch_\e[w]$ which is an $\e$-Christoffel word (Definition~\ref{def:chris}). In particular, $\Ch_\e[w]$ is cyclically reduced. Furthermore, the word $\Ch_\e[w]$ is a positive word in $\{\Ch_\e[x_i],\Ch_\e[y_i]\}$ by Lemma~\ref{lem:chris}, and the pair $\f_i=(\Ch_\e[x_i],\Ch_\e[y_i])$ is an $\e$-Christoffel basis. Therefore, by Theorem~\ref{thm:main} again, there exist $o_i\in\H^3$ and $m_i, c_i>0$ such that
\[
m_i\cdot d_\e(u,v) - c_i \le d_\H(\rho(u)(o_i),\rho(v)(o_i)),
\]	
for all $u,v\in\Axis(w)$. Let $r_i=d_{\H}(o,o_i)$. Then, by the triangle inequality, we have
\begin{align*}
d_\H(\rho(u)(o),\rho(v)(o))
&\ge d_{\H}(\rho(u)(o_i),\rho(v)(o_i))
-d_{\H}(\rho(u)(o_i),\rho(u)(o))-d_{\H}(\rho(v)(o),\rho(v)(o_i))\\
&\ge m_i\cdot d_\e(u,v)-(c_i+2r_i),
\end{align*}
for all $u,v\in\Axis(w)$.

These constants $m_i, c_i$ and $r_i$ depend only on the interval $\I_i$. Let $(m_0,c_0)$ be some positive constants for which the analogous inequalities of Definition~\ref{def:altps} hold for the \emph{finite} number of vertices in $\V_{\le N}$ with respect to the base point $o$. By setting
\begin{align*}
m&=\min\{m_0,m_i\mid 1\le i\le 2^{N+1}\}\\
c&=\max\{c_0,c_i+2r_i\mid 1\le i\le 2^{N+1}\},
\end{align*}
we conclude that $\rho$ satisfies Definition~\ref{def:altps} with constants $m,c>0$ and base point $o$.
\end{proof}

\subsection{On Lupi's proof}\label{sec:lupi}

Lupi recently showed in his thesis \cite{Lup}*{Proposition 3.4 and Theorem 4.3} that, for representations $\rho:\F(a,b)\to\PSL(2,\R)$, primitive stability is equivalent to the Q-conditions. In order to sketch his idea of proof, we first remark that, for $\PSL(2,\R)$ representations, the only non-trivial case to analyze is the non-Schottky representations where the axes of hyperbolic transformations $a$ and $b$ intersect, namely, the case we discussed in the introduction. See Figure~\ref{fig:torus}.

There we have a quadrilateral $Q$ whose opposite edges are paired by $a$ and $b$. Thus as a quotient of $Q$ we obtain a hyperbolic torus with one cone-type singularity. The image of such representation is not discrete in general, unless the cone-angle is a rational multiple of $\pi$. Nevertheless, we may still think of $Q$ as a sort of fundamental domain and investigate its translates under the action. In fact, a crucial observation made by Lupi is that, for a primitive element $w$ with $\Axis(w)=\{u_i\}_{i\in\Z}$ in $\F(a,b)$, the translates $\rho(u_i)(Q)$ of $Q$ do not overlap and form an infinite staircase-like strip. See \cite{Lup}*{Figure 3.3}. From this one can easily see that the image of $\Axis(w)$ under an orbit map is a uniform quasi-geodesic.

For $\PSL(2,\C)$-representations satisfying the Q-conditions, however, there seems to be no ``pseudo-fundamental domain" in $\H^3$ as simple as the quadrilateral $Q$ in $\H^2$. For example, consider a slight perturbation of the above example out of $\PSL(2,\R)$ and its Coxeter extension $W(p,q,r)$. We obtain four complete geodesics which are axes of $pqr$ and of its three other conjugates as in Figure~\ref{fig:torus}. The four axes are in skew position so that, in general, no (totally geodesic) plane in $\H^3$ contains any two of them. So there is no direct analogue of the quadrilateral $Q$.

We may try to form a domain using other types of surfaces than planes. For example, the three axes of $pqr$, $p$ and $qrp$ (see Figure~\ref{fig:torus} again) can be used to define a ruled surface, that is, the union of all complete geodesics which intersect the three axes simultaneously. For a slight perturbation of the above example, we may then form a ``twisted" quadrilateral prism bounded by such ruled surfaces. However, it seems rather hard to analyze such an object. For example, it is not immediate to know how long its combinatorial structure is preserved under perturbation and how far the distance is between two opposite surfaces of the quadrilateral.

\subsection{On primitive displacing actions}\label{sec:pspd}

Let us explain why we may view Theorem~\ref{I} in analogy with a theorem of Delzant, Guichard, Labourie and Mozes \cite{DGLM}.

Let $(X, d_X)$ be a metric space and $\rho:\Ga\to\Isom(X)$ an isometric action of a group $\Ga$. For an element $w\in\Ga$ its translation length $\ell_X(w)$ for the $\rho$-action is defined as
\[
\ell_X(w)=\inf_{x\in X}d_X(x,\rho(w)(x)).
\]
Endow $\Ga$ with a word metric $d_\e$ for a generating set $\e$, and consider the isometric action of $\Ga$ on itself by left multiplication. The translation length of $w\in\Ga$ for this action will be denoted by $\ell_\e(w)$. 

The action $\rho:\Ga\to\Isom(X)$ is called \emph{displacing} if
there are positive constants $m$ and $c$ such that
\[
\ell_X(w)\ge m\cdot\ell_\e(w)-c
\]
for all $w\in\Ga$. Delzant et al showed that if $\Ga$ is word hyperbolic then the action $\rho$ is displacing if and only if an orbit map of $\rho$ is a quasi-isometric embedding. See \cite{DGLM}*{Lemma 2.0.1, Proposition 2.2.1, Corollary 4.0.6}. 

Since the free group $\F_2$ is word hyperbolic, we may view our Theorem~\ref{I} in analogy with the above theorem. Namely, if we focus only on actions of primitive elements, the Q-conditions, by Theorem~\ref{thm:bqpd} and (\ref{eqn:ltr}), can be thought of as a weakening of the displacing property, while primitive stability is obviously a weakening of quasi-isometric embedding.  

Primitive stability is originally defined by Minsky \cite{Min} for free groups of arbitrary rank. The Q-conditions also make sense for such free groups, but Theorem~\ref{thm:bqpd} is only known to hold for $\F_2$. One may then ask if it holds for any $\F_n$. Or, regardless, we rather consider the following definition: a representation $\rho:\F_n\to\Isom(X)$ ($n\ge2$) is said to be \emph{primitive displacing} if there are positive constants $m$ and $c$ such that
\[
\ell_X(w)\ge m\cdot\ell_\e(w)-c
\]
for all primitive elements $w\in\F_n(\e)$. Then it is natural to ask:
\begin{question}
For which metric space $X$, is it true that $\rho:\F_n\to\Isom(X)$ ($n\ge2$) is primitive stable if and only if it is primitive displacing?
\end{question}
In fact, one direction, primitive stability implies primitive displacing, is true with the same proof as in Proposition~\ref{prop:psbq}. So the question is really about the other. Our Theorem~\ref{I} tells us that the equivalence is true for $n=2$ and $X=\H^3$. The next case one may want to explore seems to be when $n=2$ and $X$ is a rank-$1$ symmetric space of non-compact type, e.g., $\H^4$ and $\C\H^2$. In the absence of trigonometric rules, one may employ certain large-scale geometric arguments in order to draw a conclusion that is analogous to Lemma~\ref{lem:angle}, assuming the primitive displacing property and using Theorem~\ref{thm:Nielsen}(b).

\section{Bounded intersection property}\label{sec:bi}

In this section we investigate representations $\rho:\F_2\to\PSL(2,\C)$ satisfying a new condition which we call the bounded intersection property. It is described using primitive elements in $\F_2$ that are palindromic in a fixed basis. Our definition is motivated by the work of Gilman and Keen \cite{GK09}*{Theorem 6.6}, where they use palindromic elements of $\F_2$ to give a sufficient condition for a representation to be discrete.

In Section~\ref{sec:pf2} we prove Theorem~\ref{II} that the bounded intersection property is implied by the Q-conditions.

\subsection{Tri-coloring}

For the forthcoming discussion we need the \emph{tri-coloring} of the Farey triangulation.

We fix a basis
\[
\e=(a,b)
\]
with abelianization $\pi_\e:\F_2\to\Z^2$ as in \eqref{eqn:abel}. Consider the homomorphism given by reduction modulo $2$ 
\begin{align*}
\PGL(2,\Z)\twoheadrightarrow\PGL(2,\Z/2)\cong\Sym_3.
\end{align*}
Its kernel, the \emph{level two congruence subgroup} $\PGL(2,\Z/2)_{(2)}$, is isomorphic to the ideal triangle reflection group $T^*(\infty,\infty,\infty)\cong\Z/2\ast\Z/2\ast\Z/2$ with fundamental domain any Farey triangle.

Reduction modulo $2$
\begin{align*}
\V=\p^1(\Q)\twoheadrightarrow\p^1(\Z/2)=\{0/1,\;1/1,\;1/0\}
\end{align*}
partitions the vertex set $\V$ into three $\PGL(2,\Z/2)_{(2)}$-orbits ``colored" by $0/1$, $1/1$ and $1/0$, respectively. Note that the three vertices of a Farey triangle are all colored differently. A Farey edge is then colored by the same color of its opposite vertex. See Figure~\ref{fig:pal1}. For example, the vertex $[a]$ is colored by $1/0$ (black), the vertex $[b]$ by $0/1$ (white), and the edge $[a,b]$ by $1/1$ (gray). If a vertex $[w]\in\V$ is colored by $c\in\{0/1,\;1/1,\;1/0\}$ then we shall often say $[w]$ is a $c$-vertex. Similarly for edges.

For a more detailed discussion of tri-coloring, see \cite{GMST}*{Section 2}.

\subsection{Primitive palindromes}

We shall be interested in primitive elements that are palindromic.

\begin{definition}
A word $w=w(a,b)\in\F_2$ is said to be \textit{palindromic} in a basis $\e=(a,b)$ (or, simply, an \emph{$\e$-palindrome}) if it reads the same forward and backward, that is,
\begin{align}\label{eqn:pal}
w(a,b)^{-1}=w(a^{-1},b^{-1}).
\end{align}
\end{definition}

Gilman and Keen \cite{GK11} introduced an algorithm of finding a palindromic representative in each primitive conjugacy class whose cyclically reduced length \eqref{eqn:le} is odd. This algorithm can be described by slightly modifying the inductive procedure for the Christoffel function (Definition~\ref{def:chris}(b)) as follows. Co-direct all Farey edges with the directed edge $[\e]=[a,b]$ as in Section~\ref{sec:chris}.

\begin{figure}[ht]
\labellist
\pinlabel {$a$} at 8 156
\pinlabel {$a^{-1}$} at 8 124
\pinlabel {$b$} at 650 144
\pinlabel {\scriptsize $ba$} at 340 176
\pinlabel {\scriptsize $ba^{-1}$} at 340 110
\pinlabel {$aba$} at 190 224
\pinlabel {$bab$} at 480 224
\pinlabel {$a^{-1}ba^{-1}$} at 190 64
\pinlabel {$ba^{-1}b$} at 480 64
\pinlabel {\scriptsize $aba^2$} at 110 270
\pinlabel {$a(ba)^2$} at 250 270
\pinlabel {$(ba)^2b$} at 410 270
\pinlabel {\scriptsize $b^2ab$} at 550 270

\pinlabel \rotatebox{50}{$a^2ba^2$} at 54 360
\pinlabel \rotatebox{50}{$aba^3ba$} at 130 360
\pinlabel \rotatebox{50}{\scriptsize $a(ba)^2aba$} at 206 350
\pinlabel \rotatebox{50}{$a(ba)^3$} at 284 360
\pinlabel \rotatebox{50}{$(ba)^3b$} at 356 360
\pinlabel \rotatebox{50}{\scriptsize $bab(ba)^2b$} at 430 350
\pinlabel \rotatebox{50}{$bab^3ab$} at 504 350
\pinlabel \rotatebox{50}{$b^2ab^2$} at 580 360

\pinlabel {$a$} at 850 154
\pinlabel {$b$} at 960 154
\pinlabel {$c$} at 904 252
\pinlabel {\scriptsize $aba$,} at 856 228
\pinlabel {\scriptsize $cbc$} at 848 212
\pinlabel {\scriptsize $cac$,} at 954 228
\pinlabel {\scriptsize $bab$} at 964 212
\pinlabel {\scriptsize $bcb$,} at 900 130
\pinlabel {\scriptsize $aca$} at 910 114

\endlabellist
\centering
\includegraphics[width=1\textwidth]{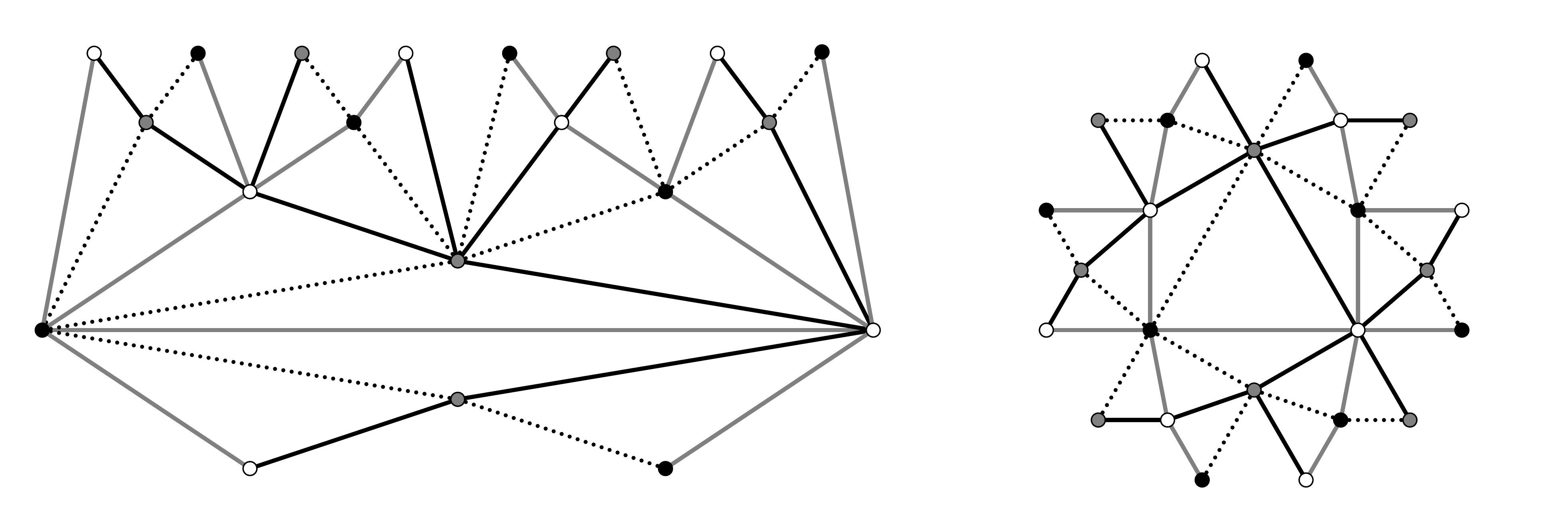}
\caption{(Left) The $\e$-palindrome function. (Right) The $\ti{\e}$-palindromes. }
\label{fig:pal1}
\end{figure}

\begin{definition}\label{def:pal}
Let $S=\F(a,b)$ and define $B:S\times S \to S$ by
\[
B(w_1,w_2)=
\left\{
\begin{array}{ll}
w_2w_1& \text{if $[w_1,w_2]$ and $[a,b]$ have the same color,}\\
w_1w_2& \text{otherwise.}
\end{array}
\right.
\]
The \emph{$\e$-palindrome function} $\Pal_\e$ is defined inductively on $\V^+[a,b]$ with initial values $\Pal_\e[a]=a$ and $\Pal_\e[b]=b$, and on $\V^-[a,b]$ with initial values $\Pal_\e[a]=a^{-1}$ and $\Pal_\e[b]=b$.
\end{definition}

See Figure~\ref{fig:pal1}(Left). Observe that $\Pal_\e[w]$ is an $\e$-palindrome if and only if $[w]\in\V$ has color $1/0$ or $0/1$ (black or white). On the other hand, $\Pal_\e[w]$ is a product of two $\e$-palindromes if and only if $[w]$ has color $1/1$ (gray). These are re-statements of \cite{GK11}*{Theorem 2.1}, since $[w]$ has color $1/1$ (gray) if and only if its cyclically reduced length $\|w\|_\e=\Fi_\e[w]$ is even, that is, $1+1$ (mod $2$). We also note that, according to \cite{Pig}*{Theorem 1.(1)}, $\Pal_\e[w]$ and its inverse are the only $\e$-palindromic representatives belonging to the unoriented conjugacy class $[w]\in\V$.

An \emph{$\e$-palindromic basis} $(x,y)$ is a basis such that both $x$ and $y$ are $\e$-palindromes. As with the $\e$-Christoffel bases, it is apparent that if $[x,y]$ is a Farey edge then the pair $(\Pal_\e[x],\Pal_\e[y])$ is a basis. If, in addition, $[x,y]$ and $[a,b]$ have the same color, then $(\Pal_\e[x],\Pal_\e[y])$ is an $\e$-palindromic basis. Compare \cite{Pig}*{Theorem 2} and \cite{KR07}*{Theorem 5.5}.

\begin{lemma}\label{lem:pal1}
Let $\f=(x,y)=(\Pal_\e[x],\Pal_\e[y])$ be an $\e$-palindromic basis (so that $[x,y]$ and $[a,b]$ have the same color). Then
\begin{enumerate}[label=\textup{(\alph*)},nosep,leftmargin=*]
\item the basis $\e=(a,b)$ is an $\f$-palindromic basis; 
\item the set of $\e$-palindromes is the same as the set of $\f$-palindromes.
\end{enumerate}
\end{lemma}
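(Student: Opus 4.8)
The plan is to prove both items simultaneously by unravelling what it means for a word to be palindromic in the new basis $\f=(x,y)$, using the hypothesis that $x=\Pal_\e[x]$ and $y=\Pal_\e[y]$ are themselves $\e$-palindromes. The key observation is the following: if $w=w(x,y)$ is a word in the letters $x^{\pm1},y^{\pm1}$, then since each $x^{\pm1}$ and $y^{\pm1}$ is an $\e$-palindrome, the $\e$-reversal of $w$ (the map sending a word $s_1s_2\cdots s_k$ in $\{a^{\pm1},b^{\pm1}\}$ to $s_k\cdots s_2 s_1$) acts on $w$ simply by reversing the order of its $\f$-syllables. More precisely, define for any basis $\g=(u,v)$ the anti-automorphism $\mathrm{rev}_\g$ of $\F_2$ fixing $u$ and $v$ and reversing products; then $\mathrm{rev}_\g$ is the unique anti-automorphism fixing the two basis elements, and $w$ is a $\g$-palindrome if and only if $\mathrm{rev}_\g(w)=w$. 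So the heart of the matter is to show $\mathrm{rev}_\e=\mathrm{rev}_\f$ as anti-automorphisms of $\F_2$; once that is known, item (b) is immediate, and item (a) follows because $a$ and $b$ are trivially $\e$-palindromes hence $\f$-palindromes.

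First I would record the characterization just mentioned: an anti-automorphism of $\F_2$ is determined by its values on a basis, and $\mathrm{rev}_\g$ is the anti-automorphism with $\mathrm{rev}_\g(u)=u$, $\mathrm{rev}_\g(v)=v$; a word is a $\g$-palindrome precisely when it is fixed by $\mathrm{rev}_\g$. Next I would verify $\mathrm{rev}_\e(x)=x$ and $\mathrm{rev}_\e(y)=y$: this is exactly the hypothesis that $x$ and $y$ are $\e$-palindromes, rewritten via \eqref{eqn:pal} (note $\mathrm{rev}_\e(w)=w$ iff $w(a,b)^{-1}=w(a^{-1},b^{-1})$ after also applying the abelian inversion $\mathfrak e$, which commutes with $\mathrm{rev}_\e$; one should be slightly careful and phrase $\mathrm{rev}_\e$ correctly, but the upshot is that $\e$-palindromicity of $x,y$ says precisely that $\mathrm{rev}_\e$ fixes $x$ and $y$). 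Since $\mathrm{rev}_\e$ is an anti-automorphism fixing the basis $\f=(x,y)$, uniqueness forces $\mathrm{rev}_\e=\mathrm{rev}_\f$.

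With $\mathrm{rev}_\e=\mathrm{rev}_\f$ in hand, the two items fall out: for (b), $w$ is an $\e$-palindrome iff $\mathrm{rev}_\e(w)=w$ iff $\mathrm{rev}_\f(w)=w$ iff $w$ is an $\f$-palindrome, giving equality of the two sets of palindromes; for (a), $a$ and $b$ are $\e$-palindromes (a single letter reads the same backwards), hence $\f$-palindromes, so $\e=(a,b)$ is an $\f$-palindromic basis. One should double-check that $(a,b)$ being a basis whose two members are $\f$-palindromes meets the definition of an $\f$-palindromic basis — it does, by definition.

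The main obstacle I anticipate is purely bookkeeping: pinning down the precise anti-automorphism that encodes ``reads the same forward and backward'' so that the identity $w(a,b)^{-1}=w(a^{-1},b^{-1})$ translates cleanly into a fixed-point statement, and making sure the substitution $x=\Pal_\e[x]$, $y=\Pal_\e[y]$ (i.e. writing an arbitrary $w$ first as a word in $x,y$ and then expanding into $a,b$) genuinely commutes with reversal. The point that makes this work with no inequality or trace input at all is that reversal of an $\e$-word built by concatenating $\e$-palindromic blocks equals reversal at the level of blocks — a one-line syllable computation — so I do not expect any real difficulty beyond stating it carefully.
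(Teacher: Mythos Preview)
Your argument is correct and considerably cleaner than the paper's. The key observation you make is that ``being an $\e$-palindrome'' is exactly ``being fixed by $\mathrm{rev}_\e$'', the unique anti-automorphism of $\F_2$ fixing the two basis elements $a,b$; since $x$ and $y$ are $\e$-palindromes, $\mathrm{rev}_\e$ fixes the basis $\f=(x,y)$, so by uniqueness $\mathrm{rev}_\e=\mathrm{rev}_\f$, and both parts drop out. The parenthetical hedge about composing with $\mathfrak e$ is unnecessary: a direct letter-by-letter check shows that $w(a,b)^{-1}=w(a^{-1},b^{-1})$ is equivalent to $\mathrm{rev}_\e(w)=w$, with no auxiliary automorphism needed.

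The paper instead proceeds inductively along the Farey gallery from $[a,b]$ to $[x,y]$: it decomposes this gallery into ``simple'' pieces at the $1/1$-colored edges, invokes a transitivity property of palindromicity to reduce to a single simple gallery, and then writes down explicit formulas such as $a=x(y^{-1}x)^i$, $b=(x^{-1}y)^{i+1}x^{-1}$ to verify by hand that $a,b$ are $\f$-palindromes. Your anti-automorphism argument bypasses all of this combinatorics and works uniformly; it also proves (b) directly rather than deducing it from (a) plus transitivity. The paper's route has the minor advantage of producing explicit expressions for $a,b$ in the new basis, but for the lemma as stated your approach is both shorter and more conceptual.
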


\begin{proof}
(a) Without loss of generality, we may assume that $[x]$ and $[y]$ are both in $\V^+[a,b]$ and the edge $[x,y]$ is co-directed with $[a,b]$. Let $\Gal$ denote the shortest gallery  containing both $[a,b]$ and $[x,y]$. Let $\{[g_k]\in\V \mid 1\le k\le n\}$ be the set of all $1/1$-vertices (gray vertices) in $\Gal$. Then we can decompose $\Gal$ into a chain of \emph{simple} galleries $\Gal_k$ $(1\le k\le n)$ 
\[
\Gal=\Gal_1\cup\Gal_2\cup\cdots\cup\Gal_n,
\]
where each $\Gal_k$ is the star of $[g_k]$ in $\Gal$, namely, the union of all Farey triangles in $\Gal$ containing $[g_k]$. The intersection $\Gal_k\cap\Gal_{k+1}$ of two adjacent simple galleries is a gray edge $[a_k,b_k]$, so that each $\Gal_k$ is the shortest gallery connecting two gray edges $[a_{k-1},b_{k-1}]$ and $[a_k,b_k]$, where $(a_0,b_0)=(a,b)$ and $(a_n,b_n)=(x,y)$. See Figure~\ref{fig:pal2}(Bottom).

\begin{figure}[ht]
\labellist
\pinlabel {$a$} at 66 342
\pinlabel {\scriptsize$a(ba)$} at 150 342
\pinlabel {\scriptsize$a(ba)^{i-1}$} at 260 342
\pinlabel {$a(ba)^{i}$} at 340 342

\pinlabel {$b$} at 66 206
\pinlabel {\scriptsize$ba$} at 200 206
\pinlabel {$a(ba)^{i+1}$} at 340 206
\pinlabel {($i\ge0$)} at 380 280

\pinlabel {$a$} at 492 342
\pinlabel {\scriptsize$ba$} at 626 342
\pinlabel {$(ba)^{i+1}b$} at 770 342
\pinlabel {($i\ge0$)} at 804 280

\pinlabel {$b$} at 492 206
\pinlabel {\scriptsize$(ba)b$} at 570 206
\pinlabel {\scriptsize$(ba)^{i-1}b$} at 680 206
\pinlabel {$(ba)^{i}b$} at 770 206

\pinlabel {$a=a_0$} at 54 166
\pinlabel {$b=b_0$} at 54 30
\pinlabel {\scriptsize$a_1$} at 224 166
\pinlabel {\scriptsize$b_1$} at 224 30
\pinlabel {\scriptsize$a_2$} at 374 166
\pinlabel {\scriptsize$b_2$} at 374 30
\pinlabel {\scriptsize$a_3$} at 524 166
\pinlabel {\scriptsize$b_3$} at 524 30
\pinlabel {\scriptsize$a_{n-1}$} at 674 166
\pinlabel {\scriptsize$b_{n-1}$} at 674 30
\pinlabel {$a_{n}=x$} at 850 166
\pinlabel {$b_{n}=y$} at 850 30

\endlabellist
\centering
\includegraphics[width=1\textwidth]{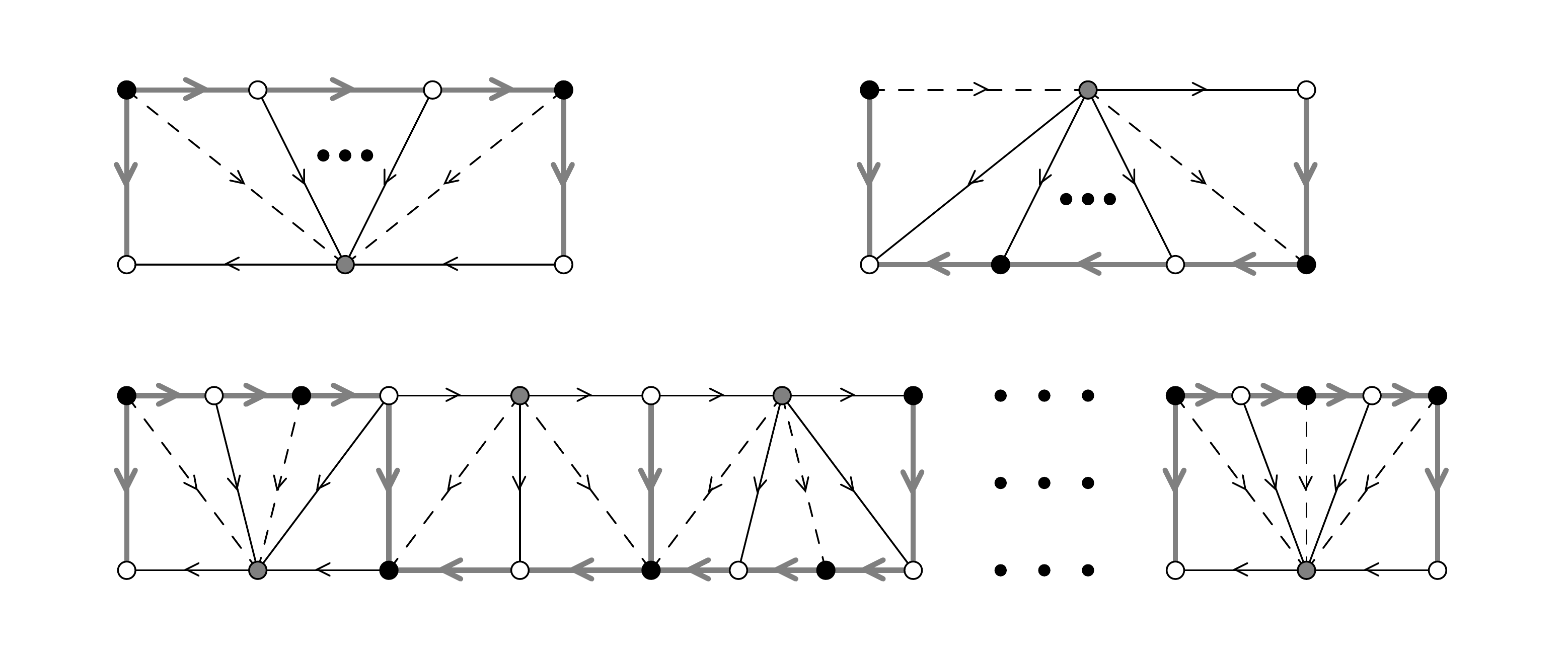}
\caption{(Top) Two simple galleries. (Bottom) A generic gallery connecting $[a,b]$ and $[x,y]$.}
\label{fig:pal2}
\end{figure}

Note that the property of being palindromic is transitive in the following sense: if a word $w=w(x',y')$ is an $(x',y')$-palindrome and if $(x',y')$ is an $(x,y)$-palindromic basis, then $w=w(x,y)$ is an $(x,y)$-palindrome. Thus, by induction, it is enough to prove the lemma when $n=1$ and the gallery $\Gal$ is simple.  

There are two possibilities of a simple gallery:
\[
(x,y)=(a(ba)^i,a(ba)^{i+1})\quad\textup{or}\quad
(x,y)=((ba)^{i+1}b,(ba)^ib)
\]
for some integer $i\ge0$. See Figure~\ref{fig:pal2}(Top). As the proofs are similar we shall consider only the first case. Then we have $x^{-1}y=ba$ and
\begin{align*}
a&=x(ba)^{-i}=x(y^{-1}x)^i,\\
b&=x^{-1}ya^{-1}=x^{-1}y(x^{-1}y)^ix^{-1}=(x^{-1}y)^{i+1}x^{-1}.
\end{align*}
Therefore, both $a$ and $b$ are $\f$-palindromes. The proof is complete.

(b) This follows from (a) and the transitive property of palindromes we noted above.
\end{proof}

Following \cite{GMST}*{Section 2.2} we define a \emph{basic triple} as an ordered triple $(x,y,z)\in\Prim^3$ such that $(x,y)$ is a basis and $xyz=1$. A basis $\f=(x,y)$ extends uniquely to a basic triple $(x,y,z)$, which we denote by $\ti{\f}$. It is clear that this correspondence is one-to-one between bases and basic triples.

\begin{definition}\label{def:fpal}
Let $\ti{\f}=(x,y,z)$ be a basic triple. An element $w\in\F_2$ is said to be an \emph{$\ti{\f}$-palindrome} if it is palindromic in either $(x,y)$ or $(y,z)$ or $(z,x)$ (when written in these bases). 
\end{definition}

Consider the basic triple $\ti{\e}=(a,b,c)$ associated to the distinguished basis $\e=(a,b)$. In addition to the $(a,b)$-palindrome function (Definition~\ref{def:pal}) we may consider the $(b,c)$-palindrome function and the $(c,a)$-palindrome function as well. See Figure~\ref{fig:pal1}(Right), where the vertices $[a]$, $[b]$ and $[c]$ are colored black, white and gray, respectively. We noted previously that $\Pal_{(a,b)}[w]$ is an $(a,b)$-palindrome unless $[w]$ is colored gray. Analogous statements are true of the functions $\Pal_{(b,c)}$ and $\Pal_{(c,a)}$. Thus, for example, a black vertex contains an $(a,b)$-palindrome and a $(c,a)$-palindrome. Therefore, we can conclude as follows. 

\begin{lemma}\label{lem:pal2}
If a primitive element is not conjugate to $a^{\pm1}$, $b^{\pm1}$ or $c^{\pm1}$, then its conjugacy class contains at least one and at most two $\ti{\e}$-palindromes.
\end{lemma}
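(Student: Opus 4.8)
The plan is to recast the statement entirely in terms of the tri-colored Farey triangulation and the three palindrome functions $\Pal_{(a,b)}$, $\Pal_{(b,c)}$, $\Pal_{(c,a)}$ attached to the basic triple $\ti{\e}=(a,b,c)$, and then read off the conclusion from how these three functions behave at a single vertex. First I would note that a primitive element $w$ not conjugate to $a^{\pm1}$, $b^{\pm1}$ or $c^{\pm1}$ represents an unoriented primitive class $[w]\in\V$ distinct from the three vertices $[a]$, $[b]$, $[c]$; since conjugacy classes of primitive elements are identified with vertices of $\V$, it suffices to argue at the level of $[w]$. In the tri-coloring the vertices $[a]$, $[b]$, $[c]$ carry the three distinct colors, hence so do the three edges of the associated Farey triangle: $[a,b]$ has the color of $[c]$, $[b,c]$ the color of $[a]$, and $[c,a]$ the color of $[b]$. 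In particular the color of $[w]$ agrees with the color of exactly one of these three edges.

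For the lower bound I would invoke the reformulation of \cite{GK11}*{Theorem 2.1} quoted above, together with its analogues for $\Pal_{(b,c)}$ and $\Pal_{(c,a)}$: for each basis $\g$ among $(a,b)$, $(b,c)$, $(c,a)$, the value $\Pal_{\g}[w]$ is a $\g$-palindrome precisely when the color of $[w]$ differs from the color of the edge $[\g]$, and when the colors coincide the class $[w]$ contains no $\g$-palindrome at all (its cyclically reduced length in the basis $\g$ being even). Since the color of $[w]$ matches exactly one of the three edge colors, exactly two of the three functions $\Pal_{(a,b)}$, $\Pal_{(b,c)}$, $\Pal_{(c,a)}$ return honest palindromes at $[w]$; so the class of $w$ contains at least one $\ti{\e}$-palindrome. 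For the upper bound I would then use \cite{Pig}*{Theorem 1.(1)}: for a fixed basis $\g$, the only $\g$-palindromic representatives of $[w]$ are $\Pal_{\g}[w]$ and its inverse. Thus each of the (at most two) bases in which $[w]$ admits a palindrome contributes only a single $\ti{\e}$-palindrome up to inversion, while the remaining basis contributes none, and the class contains at most two $\ti{\e}$-palindromes.

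The one point I expect to require care — and the reason the statement is phrased as ``at least one'' rather than ``at least two'' — is that the two palindromes coming from the two successful bases need not be distinct: a single word can in principle be palindromic in two of the three bases simultaneously, collapsing the count to one. This is exactly the phenomenon occurring for the excluded classes $[a]$, $[b]$, $[c]$, whose single-letter representatives are palindromic in two of the three bases, which is why those classes are set aside. For the lemma as stated this collapse is harmless, so no further work is needed.

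If one wished for the sharper conclusion that the count is always exactly two for $[w]\notin\{[a],[b],[c]\}$, the extra step would be to compare $\Pal_{\g}[w]$ for the two good bases $\g$ directly — e.g.\ by the transitivity of palindromicity through an intermediate palindromic change of basis, as in Lemma~\ref{lem:pal1}, or by a short reduced-word analysis — and check that they cannot coincide once $[w]$ is not one of the three distinguished vertices. That comparison is where essentially all of any additional argument would lie, but it is not needed for the statement above.
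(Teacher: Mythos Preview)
Your proposal is correct and follows essentially the same route as the paper. The paper's argument, given in the paragraph immediately preceding the lemma, is precisely the tri-coloring observation you make: for a vertex $[w]$ of a given color, the two bases among $(a,b)$, $(b,c)$, $(c,a)$ whose edges carry a different color each produce a palindromic representative via $\Pal_{\g}$, while the third basis (matching color, hence even cyclically reduced length) produces none; combined with Piggott's uniqueness result cited earlier, this gives at most two. Your additional discussion of whether the two palindromes can coincide is exactly what the paper addresses in the sentence following the lemma, noting (without proof, and without needing it) that they are in fact always distinct.
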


\noindent One can actually show that the two $\ti{\e}$-palindromic representatives are distinct elements of $\F_2$, but we shall not need this fact.

\subsection{Geometry of palindromes}\label{sec:pal}

Let $\e=(a,b)$ be the distinguished basis of $\F_2$ and $\ti{\e}=(a,b,c)$ the associated basic triple. As in Section~\ref{sec:coxeter} (but recall Remark~\ref{rem:2fold}) we denote by $\W(p,q,r)$ the free Coxeter group of rank three, and by 
\[
\psi_\e:\F(a,b)\to\W(p,q,r)
\]
the embedding defined by $\psi_\e(a)=qr$ and $\psi_\e(b)=rp$. We then have $\psi_\e(c)=pq$. Observe that the conjugation by the involutory element $r$ inverts $\psi_\e(a)$ and $\psi_\e(b)$:
\begin{align*}
r\cdot\psi_\e(a)\cdot r&=rq=\psi_\e(a^{-1})\\ r\cdot\psi_\e(b)\cdot r&=pr=\psi_\e(b^{-1}).
\end{align*}
Thus for any word $w=w(a,b)\in\F(a,b)$ we have
\begin{align}\label{eqn:r}
r\cdot\psi_\e(w(a,b))\cdot r=\psi_\e(w(a^{-1},b^{-1})).
\end{align}
See also the end of Appendix~\ref{sec:isom}.

Let $\rho:\F_2\to\PSL(2,\C)$ be an irreducible representation and $\rho_\e$ be the Coxeter extension of $\rho$ such that $\rho=\rho_\e\circ\psi_\e$ (see Theorem~\ref{thm:coxeter}). We simplify the notations as in \eqref{eqn:pqr} so that, for example, $\Axis_R$ denotes the axis of $\rho_\e(r)$.

The $\rho$-image of a palindrome has the following geometric property. See also \cite{GK09}*{Lemma 5.1}.

\begin{lemma}\label{lem:pal}
Suppose $\rho:\F_2\to\PSL(2,\C)$ is irreducible and $w=w(a,b)$ is palindromic in $\e=(a,b)$. If $\rho(w)$ is elliptic or loxodromic, then its axis intersects $\Axis_R$ orthogonally. If $\rho(w)$ is parabolic, then its fixed point is an ideal end point of $\Axis_R$.	
\end{lemma}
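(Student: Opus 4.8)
The plan is to reduce the geometric assertion to a single algebraic identity and then read off the geometry of that identity. The hypothesis that $\rho$ is irreducible gives us the Coxeter extension $\rho_\e:\W(p,q,r)\to\PSL(2,\C)$ with $\rho=\rho_\e\circ\psi_\e$ (Theorem~\ref{thm:coxeter}). Since $w=w(a,b)$ is an $\e$-palindrome we have $w(a,b)^{-1}=w(a^{-1},b^{-1})$, so formula \eqref{eqn:r} yields $r\cdot\psi_\e(w)\cdot r=\psi_\e(w(a^{-1},b^{-1}))=\psi_\e(w)^{-1}$ inside $\W(p,q,r)$. Applying $\rho_\e$ and writing $W=\rho(w)$ and $R=\rho_\e(r)$ (a half-turn, so $R=R^{-1}$), this becomes
\[
R\,W\,R^{-1}=W^{-1}.
\]
Everything now follows from analyzing this relation.

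For the case when $W$ is loxodromic or elliptic, I would argue as follows. Conjugation by an isometry carries axes to axes, so $R(\Axis_W)=\Axis_{RWR^{-1}}=\Axis_{W^{-1}}=\Axis_W$; thus $R$ preserves the geodesic $\Axis_W$. Moreover $RWR^{-1}=W^{-1}$ forces $R$ to interchange the two ideal endpoints of $\Axis_W$: it swaps the attracting and repelling fixed points when $W$ is loxodromic, and reverses the sense of rotation when $W$ is elliptic and not involutory; in either case $R$ reverses the orientation of $\Axis_W$. An orientation-preserving isometry of $\H^3$ that maps a geodesic to itself while flipping its two ends restricts on that geodesic to a reflection, hence fixes a unique point $p$ of it, and near $p$ it acts on the tangent space as a rotation by $\pi$ about a direction orthogonal to the geodesic; so it is the half-turn about the geodesic through $p$ perpendicular to $\Axis_W$. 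Since $R$ is the half-turn about $\Axis_R$, this says exactly that $\Axis_R$ meets $\Axis_W$ orthogonally. (The one degenerate case is when $W$ is itself a half-turn: then $W=W^{-1}$ and the relation only says $R$ and $W$ commute, which still forces $\Axis_W$ and $\Axis_R$ to be equal or orthogonal.) For the parabolic case, $W$ and $W^{-1}$ share their unique fixed point $\xi\in\partial_\infty\H^3$, while $RWR^{-1}$ has fixed point $R(\xi)$; hence $R(\xi)=\xi$. A half-turn fixes exactly the two endpoints of its axis on $\partial_\infty\H^3$, so $\xi$ is an ideal endpoint of $\Axis_R$.

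The only point requiring care — though it is entirely routine — is the elementary fact that an orientation-preserving isometry of $\H^3$ stabilizing a geodesic and reversing its ends must be a half-turn about an orthogonal geodesic (equivalently, the classification of half-turn relations $RWR^{-1}=W^{-1}$). I would either invoke Fenchel's treatment of half-turns (\cite{Fen}, as used throughout Section~\ref{sec:3}) or verify it by a one-line normalization: taking $\Axis_W$ to be the geodesic with endpoints $0,\infty$, so that a lift $\ti W$ is diagonal, the equation $\ti R\,\ti W\,\ti R^{-1}=\pm\ti W^{-1}$ with $\ti R\in\SL(2,\C)$ traceless pins $\ti R$ down (up to scalar) to an antidiagonal matrix, whose axis joins $\pm i$ and so is manifestly perpendicular to $0\infty$; the same normalization settles the parabolic sub-case. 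I do not foresee any other obstacle.
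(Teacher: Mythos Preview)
Your argument is correct and follows essentially the same route as the paper: derive $R\,\rho(w)\,R=\rho(w)^{-1}$ from \eqref{eqn:r} and the palindrome condition, then read off the geometry. You are in fact slightly more careful than the paper in explicitly justifying why $R$ must \emph{swap} (rather than fix) the endpoints of $\Axis_{\rho(w)}$ and in flagging the degenerate half-turn case.
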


\begin{proof}
Applying $\rho_\e$ to \eqref{eqn:r} we have $R\cdot\rho(w(a,b))\cdot R=\rho(w(a^{-1},b^{-1}))$. Since $w=w(a,b)$ is palindromic, we have from \eqref{eqn:pal} that
\[
R\cdot\rho(w)\cdot R=\rho(w)^{-1}.
\]
Thus the involution $R$ preserves the set of fixed points of $\rho(w)$ in the ideal boundary $\partial\H^3$. If $\rho(w)$ is elliptic or loxodromic, then $R$ exchanges the end points of $\Axis_{\rho(w)}$, hence $\Axis_R$ is orthogonal to $\Axis_{\rho(w)}$. If $\rho(w)$ is parabolic, then its fixed point is one of the end points of $\Axis_R$.
\end{proof}

\begin{remark}
The converse of the lemma is not true in general. That is, for a non-palindromic element $w'$, $\rho(w')$ may satisfy the same conclusion. Such a counterexample arises, for example, when there is an element $u\in\F_2$ such that $\rho(u)$ is elliptic or loxodromic and $\Axis_{\rho(u)}$ coincides with $\Axis_R$. For if $w'=uwu^{-1}$ with $w$ palindromic then both $\rho(w)$ and $\rho(w')$ have axes orthogonal to $\Axis_R$. But $w'$ cannot be palindromic if there is no cancellation in $uwu^{-1}$ as a word in $(a,b)$.

In order to find an explicit counterexample, let $\rho$ be a $\PSL(2,\R)$-representation such that $\rho(a)$ and $\rho(b)$ are hyperbolic translations with the same translation length $\ell$ where $\cosh\ell=(1+\sqrt{3})/2$ and their axes intersect orthogonally. Then one can verify that the element $\rho(a^2ba^{-1}b^{-2})$ is elliptic with axis $\Axis_R$. Let $w=(a^2ba^{-1}b^{-2})a(b^2ab^{-1}a^{-2})$. Then $w$ is not palindromic but $\rho(w)$ is hyperbolic with axis intersecting $\Axis_R$ orthogonally.
\end{remark}

\subsection{Bounded intersection property}

Let $\e=(a,b)$ and $\ti{\e}=(a,b,c)$ as before, and we continue to use the same notation as in the preceding section. 

Of particular interest are irreducible representations satisfying the first Q-condition. Given such a representation $\rho:\F_2\to\PSL(2,\C)$, the image $\rho(w)$ of any primitive element $w$ admits an axis $\Axis_{\rho(w)}$ in $\H^3$. If a primitive element $w$ is palindromic in $(a,b)$ (resp. $(b,c)$ and $(c,a)$) then by Lemma~\ref{lem:pal} the axis $\Axis_{\rho(w)}$ is orthogonal to $\Axis_R$ (resp. $\Axis_P$ and $\Axis_Q$).

Consider the following subsets of $\H^3$:
\begin{align*}
J_P&=\{\Axis_P\cap\Axis_{\rho(w)} \mid w\textup{ is a primitive $(b,c)$-palindrome}\},\\
J_Q&=\{\Axis_Q\cap\Axis_{\rho(w)} \mid w\textup{ is a primitive $(c,a)$-palindrome}\},\\
J_R&=\{\Axis_R\cap\Axis_{\rho(w)} \mid w\textup{ is a primitive $(a,b)$-palindrome}\}.
\end{align*}

\begin{definition}\label{def:bi}
A representation $\rho:\F_2\to\PSL(2,\C)$ has the \textit{bounded intersection property} with respect to $\e=(a,b)$ if it satisfies the following conditions:
\begin{enumerate}[label=\textup{(\roman*)},nosep,leftmargin=*]
\item it is irreducible;
\item all primitive elements are sent to loxodromic elements;
\item the sets $J_P$, $J_Q$ and $J_R$ are all bounded.
\end{enumerate}
\end{definition}

\noindent Note that the property (ii) is the first Q-condition (Definition~\ref{def:bq}(i)). The bounded intersection property of $\rho$ is clearly invariant under conjugation of $\PSL(2,\C)$, so the definition also makes sense in the character variety $\X(\F_2)$. We denote by $\BI\subset\X(\F_2)$ the subset consisting of all $\PSL(2,\C)$-conjugacy classes of representations having the bounded intersection property.

The above definition was made after a choice of basis $\e=(a,b)$. As we shall see below, however, it is in fact independent of the choice. Thus the set $\BI$ is invariant under the action of $\Out(\F_2)$.

\begin{proposition}\label{prop:bi}
The bounded intersection property is independent of the choice of basis.
\end{proposition}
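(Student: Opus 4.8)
The plan is to show that if $\rho$ has the bounded intersection property with respect to $\e=(a,b)$, then it has the same property with respect to an arbitrary basis $\f=(x,y)$, and vice versa. Since conditions (i) and (ii) of Definition~\ref{def:bi} are manifestly basis-independent (irreducibility is intrinsic, and ``all primitive elements loxodromic'' does not refer to a basis), the only thing to check is that the boundedness of the intersection sets $J_P,J_Q,J_R$ does not depend on the chosen basis. Because $\Out(\F_2)$ is generated by the elementary Nielsen transformations \eqref{eqn:elementary}, and the transpositions and sign changes among $a,b,c$ merely permute the roles of $P,Q,R$ (hence of $J_P,J_Q,J_R$), it suffices to treat a single transvection, say the basis change from $\e=(a,b)$ to $\f=(a,ab)$; the general case follows by composing finitely many such moves.

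First I would fix the Coxeter extension. By Theorem~\ref{thm:coxeter}, $\rho$ extends to $\rho_\e\colon\W(p,q,r)\to\PSL(2,\C)$ with $\psi_\e(a)=qr$, $\psi_\e(b)=rp$, $\psi_\e(c)=pq$, giving half-turns $P,Q,R$ with axes $\Axis_P,\Axis_Q,\Axis_R$. For the basic triple $\ti{\f}=(x,y,z)$ associated to the new basis I would identify, using Appendix~\ref{sec:a} (automorphisms of $\F_2$ in terms of the Coxeter extension), the three new reflection axes $\Axis_{P'},\Axis_{Q'},\Axis_{R'}$ as the $\rho_\e$-images of three new involutions $p',q',r'$ in $\W(p,q,r)$. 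The key algebraic point is that each of $p',q',r'$ lies in the conjugacy class (within $\W(p,q,r)$) of one of $p,q,r$ — equivalently, each new axis $\Axis_{P'}$ etc.\ is of the form $\rho(u)(\Axis_P)$ (or $\Axis_Q$, $\Axis_R$) for a \emph{fixed} word $u\in\F_2$ depending only on the Nielsen move, not on $w$. For the move $(a,b)\mapsto(a,ab)$ one has $z=(ab)^{-1}a^{-1}=b^{-1}a^{-1}$, and a short computation with $\psi$ shows the new half-turns are $r$, $p$, and a conjugate of $q$ by $\rho(a)$ (or similar), so the new axes are obtained from the old ones by the isometries $\mathrm{Id},\mathrm{Id},\rho(a)$.

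With that in hand the argument is geometric. Suppose the $\e$-sets $J_P,J_Q,J_R$ are bounded, all contained in a ball $B(o,\mathrm{Const.})$. Let $w$ be a primitive element palindromic in one of the bases $(x,y),(y,z),(z,x)$; by Lemma~\ref{lem:pal} (applied in the Coxeter extension $\rho_\e$, which is the same for every basis) its axis $\Axis_{\rho(w)}$ meets the corresponding new axis $\Axis_{R'}$ (resp.\ $\Axis_{P'},\Axis_{Q'}$) orthogonally. Writing $\Axis_{R'}=\rho(u)(\Axis_{R''})$ where $R''\in\{P,Q,R\}$ and $u$ is the fixed word above, we get $\Axis_{\rho(u^{-1}w u)}=\rho(u)^{-1}(\Axis_{\rho(w)})$ meeting $\Axis_{R''}$ orthogonally, and by Lemma~\ref{lem:pal} (converse direction is not needed — rather one checks $u^{-1}wu$ is conjugate to an $\e$-palindrome, or directly that its axis-intersection point with $\Axis_{R''}$ lies in the relevant $\e$-set $J_{R''}$). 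Hence the intersection point $\Axis_{R'}\cap\Axis_{\rho(w)}=\rho(u)\bigl(\Axis_{R''}\cap\Axis_{\rho(u^{-1}wu)}\bigr)$ lies in $\rho(u)\bigl(B(o,\mathrm{Const.})\bigr)=B(\rho(u)(o),\mathrm{Const.})$, a single ball independent of $w$. So the new intersection sets are bounded, and by symmetry of the argument (running the inverse Nielsen move) the two properties are equivalent.

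The main obstacle I anticipate is the bookkeeping in the second paragraph: pinning down precisely how the triple of reflection axes $(\Axis_P,\Axis_Q,\Axis_R)$ transforms under each elementary Nielsen move, and checking that in every case the new axes are $\rho(u_i)$-translates of the old ones for \emph{fixed} words $u_i$. This is exactly the content of Appendix~\ref{sec:a}, and the subtlety is matching the ``$2$-fold symmetric'' convention $\psi_\f(x)=pq,\psi_\f(y)=qr$ used for Coxeter extensions in Section~\ref{sec:coxeter} with the ``$3$-fold symmetric'' convention $\psi_\e(a)=qr$, $\psi_\e(b)=rp$, $\psi_\e(c)=pq$ used here for palindromes (Remark~\ref{rem:2fold}) — one must be careful that the relabelling does not secretly depend on $w$. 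Once the finitely many cases are verified, the rest is the routine ball-translation estimate above.
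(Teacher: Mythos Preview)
Your overall architecture is right and matches the paper's: for a basis change, each new half-turn axis is a $\rho(u)$-translate of one of the old axes $\Axis_P,\Axis_Q,\Axis_R$ for a \emph{fixed} word $u\in\F_2$, and hence each new intersection set should be an isometric image of one of the old ones. But the parenthetical step where you write ``one checks $u^{-1}wu$ is conjugate to an $\e$-palindrome, or directly that its axis-intersection point with $\Axis_{R''}$ lies in the relevant $\e$-set $J_{R''}$'' is a genuine gap. Being \emph{conjugate} to an $\e$-palindrome is useless here: conjugates have different axes, so this gives no control over the intersection point. And ``directly'' checking that the point lies in $J_{R''}$ is exactly what needs proof; the remark after Lemma~\ref{lem:pal} shows explicitly that orthogonality to $\Axis_R$ does \emph{not} force an element to be (or share an axis with) a primitive $(a,b)$-palindrome. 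What you actually need is that $u^{-1}wu$ \emph{is} a primitive palindrome in the appropriate old sub-basis. That is precisely Lemma~\ref{lem:pal1}(b): once you conjugate the new pair (say $(z,x)$) by your fixed $u$, you obtain a basis both of whose members are palindromes in one of $(a,b),(b,c),(c,a)$, and then the sets of palindromes coincide. Without invoking Lemma~\ref{lem:pal1} your argument does not close. (Incidentally, for the move $(a,b)\mapsto(a,ab)$ you compute $z=(ab)^{-1}a^{-1}=b^{-1}a^{-1}$; the correct value is $b^{-1}a^{-2}$.)

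The paper's proof differs in organization rather than in essential content. Instead of reducing to elementary Nielsen transformations and iterating, it handles an arbitrary new basis in one step: given $(x,y,z)$ it uses the tri-coloring to match, say, the Farey edge $[y,z]$ with one of $[a,b],[b,c],[c,a]$, takes the $\e$-palindromic representative $\f=(\Pal_\e[y],\Pal_\e[z])$ of that edge, and observes that $\f$ differs from $(y,z)$ (up to signs) by conjugation by a single $g\in\F_2$. Lemma~\ref{lem:pal1}(b) then gives $J_S=\rho(g)\cdot J_R$ directly. Both routes hinge on Lemma~\ref{lem:pal1}; the paper's avoids the bookkeeping you correctly anticipated as the main obstacle, while yours would also work once that lemma is inserted at the flagged step.
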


\begin{proof}
Suppose a representation $\rho:\F_2\to\PSL(2,\C)$ has the bounded intersection property with respect to $\e=(a,b)$ and the associated basic triple $(a,b,c)$. Let $(x,y)$ be another basis and $(x,y,z)$ the associated basic triple. Since (i) and (ii) of Definition~\ref{def:bi} are satisfied regardless of bases, it remains to check the third property (iii) for $(x,y)$. So we consider the Coxeter extension $(S,T,U)$ associated to $(x,y,z)$, so that $\rho(x)=TU$, $\rho(y)=US$ and $\rho(z)=ST$. Then our goal is to show that the associated subsets of $\H^3$
\begin{align*}
J_S&=\{\Axis_S\cap\Axis_{\rho(w)} \mid w\textup{ is a primitive $(y,z)$-palindrome}\},\\
J_T&=\{\Axis_T\cap\Axis_{\rho(w)} \mid w\textup{ is a primitive $(z,x)$-palindrome}\},\\
J_U&=\{\Axis_U\cap\Axis_{\rho(w)} \mid w\textup{ is a primitive $(x,y)$-palindrome}\}
\end{align*}
are all bounded.

We start with the set $J_S$ associated with the basis $(y,z)$. The color of the Farey edge $[y,z]$ matches one of the colors of $[a,b]$, $[b,c]$ and $[c,a]$. Without loss of generality we may assume that it matches the color of $[\e]=[a,b]$. Consider the $\e$-palindromic basis
\[
\f:=(\Pal_\e[y],\Pal_\e[z]).
\]
Since $[\f]=[y,z]$ as directed edges, $\f$ must be conjugate to one of the bases
\[
\g\in\{(y,z),\;(y^{-1},z),\;(y,z^{-1}),\;(y^{-1},z^{-1})\}.
\]
So there is an element $g\in\F_2$ such that
\[
g\f g^{-1}=\g.
\]
Then the following equalities hold:
\begingroup
\allowdisplaybreaks
\begin{align*}
J_S
&=\{\Axis_S\cap\Axis_{\rho(w)} \mid w\textup{ is a primitive $(y,z)$-palindrome}\}\\
&=\{\Axis_S\cap\Axis_{\rho(w)} \mid w\textup{ is a primitive $\g$-palindrome}\}\\
&=\{\Axis_S\cap\Axis_{\rho(w)} \mid w\textup{ is a primitive $(g\f g^{-1})$-palindrome}\}\\
&=\{\Axis_S\cap\Axis_{\rho(gwg^{-1})} \mid w\textup{ is a primitive $\f$-palindrome}\}\\
&=\rho(g)\cdot\{\Axis_{R}\cap\Axis_{\rho(w)} \mid w\textup{ is a primitive $\f$-palindrome}\}\\
&=\rho(g)\cdot\{\Axis_{R}\cap\Axis_{\rho(w)} \mid w\textup{ is a primitive $\e$-palindrome}\}\\
&=\rho(g)\cdot J_R.
\end{align*}
\endgroup
For the fourth equality, note that $(g\f g^{-1})$-palindromes are of the form $gwg^{-1}$ for some $\f$-palindrome $w$. The sixth equality holds since $\f$ is an $\e$-palindromic basis: by Lemma~\ref{lem:pal1}(b) $w$ is an $\e$-palindrome if and only if it is an $\f$-palindrome, and by Lemma~\ref{lem:pal} the $\Axis_R$, which is the common perpendicular of $\Axis_{\rho(a)}$ and $\Axis_{\rho(b)}$, is also the common perpendicular of $\Axis_{\rho(\Pal_\e[y])}$ and $\Axis_{\rho(\Pal_\e[z])}$. For the fifth equality $\rho(g)\cdot\Axis_R=\Axis_S$, note from $g\f g^{-1}=\g$ that $\Axis_S$ is the common perpendicular of $\Axis_{\rho(y)}=\Axis_{\rho(g\Pal_\e[y]g^{-1})}$ and $\Axis_{\rho(z)}=\Axis_{\rho(g\Pal_\e[z]g^{-1})}$. All the remaining equalities are either immediate or by definition. 

Since $\rho(g)$ is an isometry of $\H^3$ and $J_R$ is a bounded subset, the set $J_S$ is also bounded. In a similar fashion we can show that $J_T$ and $J_U$ are bounded as well. The proof is complete.
\end{proof}

\begin{remark}
Note that the $\Z/3$-symmetry in Definition~\ref{def:bi}(iii) is essential in the above proof. Under the conditions (i) and (ii) of Definition~\ref{def:bi}, we do not know if the boundedness of one set, say, $J_R$ implies that of the other sets $J_P$ and $J_Q$. Compare with the definition in \cite{GK09}, where only one set $J_R$ is taken into consideration.
\end{remark}

By Proposition~\ref{prop:bi} and Theorem~\ref{II} to be proved below, we know that $\BI\subset\X(\F_2)$ is invariant under the action of $\Out(\F_2)$ and contains the open subset $\BQ$. 

\begin{question}
Is the subset $\BI$ open? Does $\Out(\F_2)$ act on $\BI$ properly discontinuously? Do we have the equality $\BQ=\BI$?
\end{question}

\subsection {Proof of Theorem~\ref{II}}\label{sec:pf2}

\begin{proof}
In order to prove the first claim of the theorem, suppose $\rho:\F_2\to\PSL(2,\C)$ satisfies the Q-conditions. Then it satisfies Definition~\ref{def:bi}(i) by \cite{TWZ081}*{Theorem 1.4}. It also satisfies Definition~\ref{def:bi}(ii) which is the first Q-condition.

To prove Definition~\ref{def:bi}(iii) fix a basis $\f=(x,y)$ and the associated basic triple $(x,y,z)$. As in the proof of Proposition~\ref{prop:bi} we consider the Coxeter extension $(S,T,U)$ associated to $(x,y,z)$, so that $\rho(x)=TU$, $\rho(y)=US$ and $\rho(z)=ST$. Then we have to show that the sets $J_S$, $J_T$ and $J_U$ are all bounded. We first show that the set $J_U$ is bounded. So consider the basis $(x,y)$ and the corresponding word metric $d_{(x,y)}$ on $\F_2$.

Since $\rho$ satisfies the Q-conditions, it is primitive stable by Theorem~\ref{I}. Let $o\in\H^3$ be a point and $(M,c)$ a pair of positive constants given by the definition of primitive stability (Definition~\ref{def:ps}). Denote by $\tau_{\rho,o}$ the corresponding orbit map sending $\F_2$ into $\H^3$. Then the $\tau_{\rho,o}$-image of all primitive axes in $(\F_2,d_{(x,y)})$ are sent to $(M,c)$-quasi-geodesics in $\H^3$. By the Morse lemma (see, for example, \cite{BH}*{Theorem III.H.1.7}), we know that there is a constant $D>0$ depending on $(M,c)$ such that for any primitive element $w$ we have
\begin{align*}
d_{\textup{Haus}}(\tau_{\rho,o}(\Axis(w)),\Axis_{\rho(w)})\le D,
\end{align*}
where $d_{\textup{Haus}}$ is the Hausdorff distance for subsets of $\H^3$.
	
Assume that $w$ is palindromic in $(x,y)$ as well. Then $w$ is cyclically reduced and its axis $\Axis(w)$ in $(\F_2,d_{(x,y)})$ passes through the identity. Therefore, the image $\tau_{\rho,o}(\Axis(w))$ contains the point $o$. This implies that
\begin{align*}
d_{\H}(o,\Axis_{\rho(w)})\le D.
\end{align*}

Let $w$ and $w'$ be primitive $(x,y)$-palindromes. By Proposition \ref{lem:pal}, we know that both $\Axis_{\rho(w)}$ and $\Axis_{\rho(w')}$ are orthogonal to $\Axis_U$. Therefore the distance between them is realized by their intersection points with $\Axis_U$. Let $n$ (resp. $n'$) be the nearest-point projection of $o$ to the geodesic $\Axis_{\rho(w)}$ (resp. $\Axis_{\rho(w')}$). Then we have
\begin{align*}
d_{\H}(\Axis_{\rho(w)},\Axis_{\rho(w')})
\le d_{\H}(n,n')
\le d_{\H}(n,o)+d_{\H}(o,n')
\le 2D,
\end{align*}
that is, the distance between the two intersection points in $\Axis_U$ is bounded above by $2D$. Therefore, we conclude that the diameter of the set $J_U$ is bounded above by $4D$.
	
For the subsets $J_S$ and $J_T$ we consider the metrics $d_{(y,z)}$ and $d_{(z,x)}$ on $\F_2$, respectively. Then the same proof shows that they are bounded as well. Therefore, the representation $\rho$ has the bounded intersection property.

\medskip

We now prove the second claim of the theorem. So suppose $\rho$ has a discrete image (which, in the present case of $G=\PSL(2,\C)$, is equivalent to having a discrete orbit in $\H^3$) and satisfies the bounded intersection property with respect to a basic triple $\ti{\e}=(a,b,c)$. Since $\rho$ satisfies the first Q-condition by Definition~\ref{def:bi}(ii), it remains to check the second Q-condition.

So suppose, on the contrary, that $\rho$ does not satisfy the second Q-condition. Then there are infinitely many unoriented primitive classes $[w]\in\V$ such that $|\tr\rho(w)|\le2$. Recall that there are only three types of $\ti{\e}$-palindromes (Definition~\ref{def:fpal}) and each class $[w]\in\V$ contains at least one $\ti{\e}$-palindromes (Lemma~\ref{lem:pal2}). Thus we may assume, without loss of generality, that there are infinitely many primitive elements $w$ that are $(x,y)$-palindromes and satisfy $|\tr\rho(w)|\le2$.

Then we claim that, for any point $o\in\Axis_R$, its infinitely many images under such $\rho(w)$'s will be contained in a compact subset of $\H^3$ and hence have an accumulation point. The claim follows from the bounded intersection property, since the axes of $\rho(w)$ are all orthogonal to $\Axis_R$ and the translation lengths of $\rho(w)$ are bounded above by a uniform constant (recall \eqref {eqn:ltr}). This contradicts the assumption that $\rho$ has discrete image. Therefore, $\rho$ must satisfy the second Q-condition as well.
\end{proof}

\begin{question}
In the second statement of Theorem~\ref{II}, can we remove the injectivity condition? That is, if $\rho$ has discrete image and has the bounded intersection property, then can we still conclude that $\rho$ satisfies the Q-conditions?
\end{question}

\appendix

\section{Appendix: The structure of \texorpdfstring{$\Aut(\F_2)$}{Aut(F2)}}\label{sec:a}

We review the structure of $\Aut(\F_2)$ via the isomorphism $\Aut(\F_2)\cong\Aut(\W_3)$, where $\W_3$ denotes the free Coxeter group of rank three. One may compare our discussion with those in \cite{Gol03}*{Appendix} and \cite{GMST}*{Section 2}, for example.

As an application we explain the inductive algorithm of generating the Christoffel words in terms of $\Aut(\W_3)$.

\subsection{On \texorpdfstring{$\Aut(\W_3)$}{Aut(W3)}}

Let $\W_3=\Z/2\ast\Z/2\ast\Z/2=\langle r_1,r_2,r_3 \mid r_1^2=r_2^2=r_3^2=1\rangle$ be the free (or universal) Coxeter group of rank three. Here we collect a few facts scattered in the literature regarding the structure of $\Aut(\W_3)$. We find it helpful to visualize the group $\Aut(\W_3)$ from the representation of $\W_3$ in the group $\Isom^+(\mathbb{E}^2)$ or $\Isom^+(\H^2)$ rather than from the Cayley graph of $\W_3$. See Figure~\ref{fig:aut1}.

\begin{figure}[ht]
\labellist
\pinlabel {$r_1$} at 206 166
\pinlabel {$r_2$} at 276 116
\pinlabel {$r_3$} at 308 166
\pinlabel {$\si_{12}$} at 142 220
\pinlabel {$\si_{13}$} at 106 164
\pinlabel {$\si_{23}$} at 234 60
\pinlabel {$\si_{21}$} at 352 60
\pinlabel {$\si_{31}$} at 410 164
\pinlabel {$\si_{32}$} at 340 222
\pinlabel {$r_1r_2r_1$} at 46 256
\pinlabel {$r_1r_3r_1$} at -4 176
\pinlabel {$r_2r_3r_2$} at 230 10
\pinlabel {$r_2r_1r_2$} at 380 10
\pinlabel {$r_3r_1r_3$} at 510 176
\pinlabel {$r_3r_2r_3$} at 410 256
\pinlabel {$f_1$} at 90 206
\pinlabel {$f_2$} at 290 50
\pinlabel {$f_3$} at 410 210
\endlabellist
\centering
\includegraphics[width=0.7\textwidth]{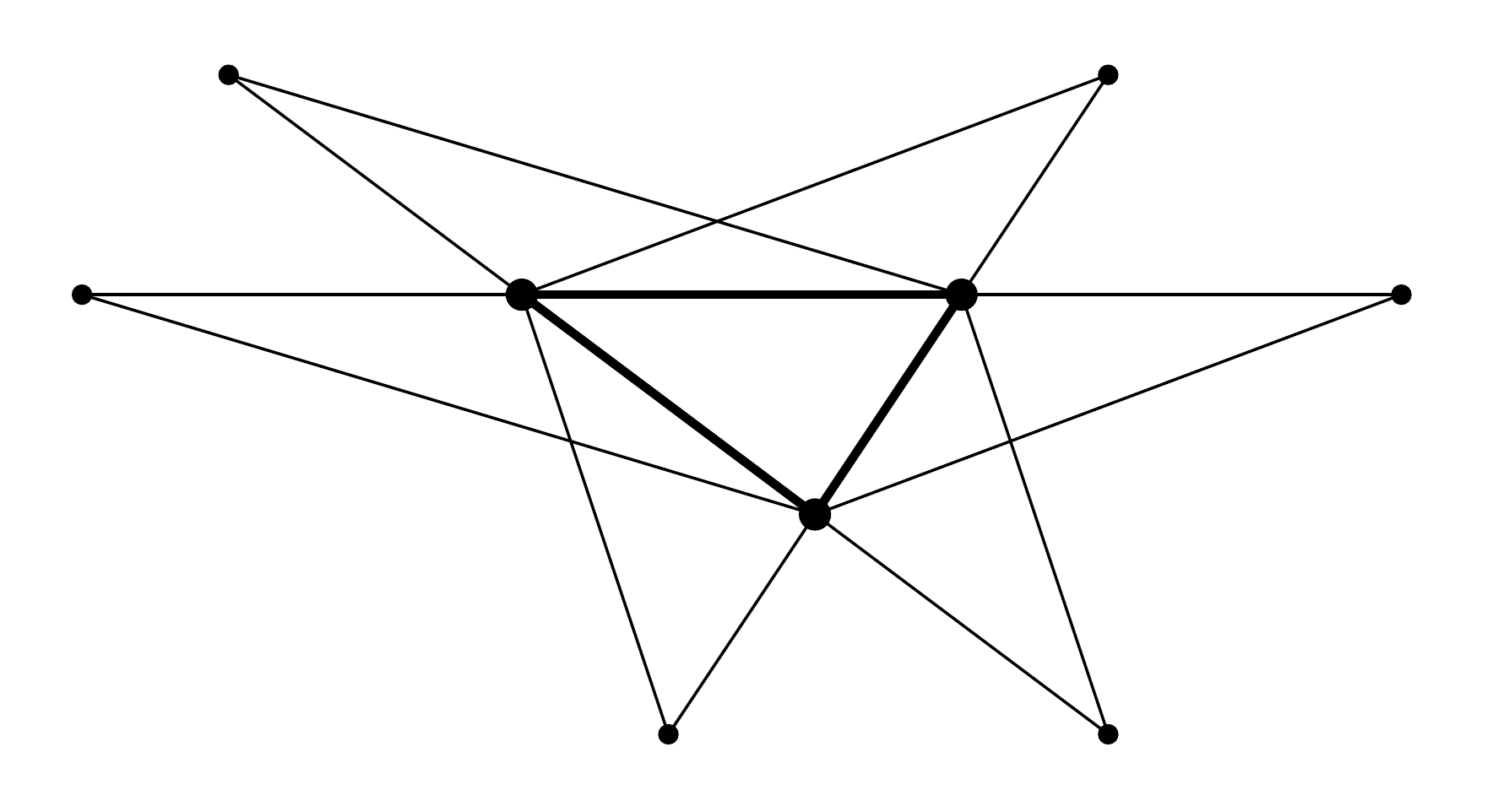}
\caption{Special involutory automorphisms $\si_{ij}$ in $\Aut(\W_3)$.}
\label{fig:aut1}
\end{figure}

Let $\Sym_3=\{1,s_{123},s_{321},t_{12},t_{23},t_{31}\}$ denote the group of automorphisms of $\W_3$ permuting the free factors. Let $\Spe(\W_3)$ denote the group of automorphisms of $\W_3$ that preserve the conjugacy classes of $r_1$, $r_2$ and $r_3$; this group is known to be generated by the following \emph{involutory} automorphisms of $\W_3$ (see \cite{Muhl98}*{Theorem}):
\begin{align*}
\si_{12}(r_1,r_2,r_3)&=(r_1,r_1r_2r_1,r_3),& \si_{13}(r_1,r_2,r_3)&=(r_1,r_2,r_1r_3r_1),\\
\si_{23}(r_1,r_2,r_3)&=(r_1,r_2,r_2r_3r_2),&
\si_{21}(r_1,r_2,r_3)&=(r_2r_1r_2,r_1,r_3),\\
\si_{31}(r_1,r_2,r_3)&=(r_3r_1r_3,r_2,r_3),&
\si_{32}(r_1,r_2,r_3)&=(r_1,r_3r_2r_3,r_3).
\end{align*}
We set and choose
\begin{align*}
\io_1&=\si_{12}\si_{13}=\si_{13}\si_{12},&
\io_2&=\si_{21}\si_{23}=\si_{23}\si_{21},&
\io_3&=\si_{31}\si_{32}=\si_{32}\si_{31},\\
f_1&\in\{\si_{12},\si_{13}\},&
f_2&\in\{\si_{23}, \si_{21}\},&
f_3&\in\{\si_{31}, \si_{32}\}.
\end{align*}
Note that $\io_k$ ($k=1,2,3$) is the conjugation by $r_k$ and thus we have $\Inn(\W_3)=\langle \io_1, \io_2, \io_3\rangle\cong\W_3$. Then it can be shown that
\begin{align*}
\Aut(\W_3)
&= \Spe(\W_3)\rtimes \Sym_3\\
&=(\langle \io_1, \io_2, \io_3\rangle
\rtimes\langle f_1, f_2, f_3 \rangle)
\rtimes \Sym_3\\
&= (\Inn(\W_3)\rtimes\W_3)\rtimes \Sym_3.
\end{align*}
See \cite{Fran02}*{Theorem 2.11} or \cite{CG90}*{Lemma 3.5 and \S 7} for a proof. The action of $\langle f_1, f_2, f_3 \rangle$ on $\langle \io_1, \io_2, \io_3\rangle$ corresponds to the action of $\langle f_1, f_2, f_3 \rangle$ on $\W_3=\langle r_1, r_2, r_3\rangle$. For example, if $f_1=\si_{12}$ then we have
\[
f_1\io_1f_1=\io_1,\quad f_1\io_2f_1=\io_1\io_2\io_1,\quad f_1\io_3f_1=\io_3.
\]
From the above we then have
\begin{align*}
\Out(\W_3)
&=\Aut(\W_3)/\Inn(\W_3)\\
&\cong \W_3\rtimes \Sym_3\\
&= \langle [f_1], [f_2], [f_3] \rangle
\rtimes \{1,s_{123},s_{321},t_{12},t_{23},t_{31}\}\\
&\cong\PGL(2,\Z)\\
&\cong T^*(2,3,\infty),
\end{align*}
where the last group denotes the $(2,3,\infty)$-triangle reflection group of $\H^2$. See Figure~\ref{fig:aut3}, where we draw $\H^2$ in the Klein projective model and shaded a fundamental domain for the action of $T^*(2,3,\infty)$.

\begin{figure}[ht]
\labellist
\pinlabel {$(1/0)\;3$} at 42 88
\pinlabel {$1\;(0/1)$} at 416 88
\pinlabel {$2\;(1/1)$} at 254 352
\pinlabel {$L$} at 172 160
\pinlabel {$R$} at 266 130
\pinlabel {$t_{12}$} at 138 106
\pinlabel {$t_{23}$} at 324 106
\pinlabel {$t_{31}$} at 216 286
\pinlabel {$[f_1]$} at 124 260
\pinlabel {$[f_2]$} at 200 40
\pinlabel {$[f_3]$} at 360 210
\endlabellist
\centering
\includegraphics[width=0.5\textwidth]{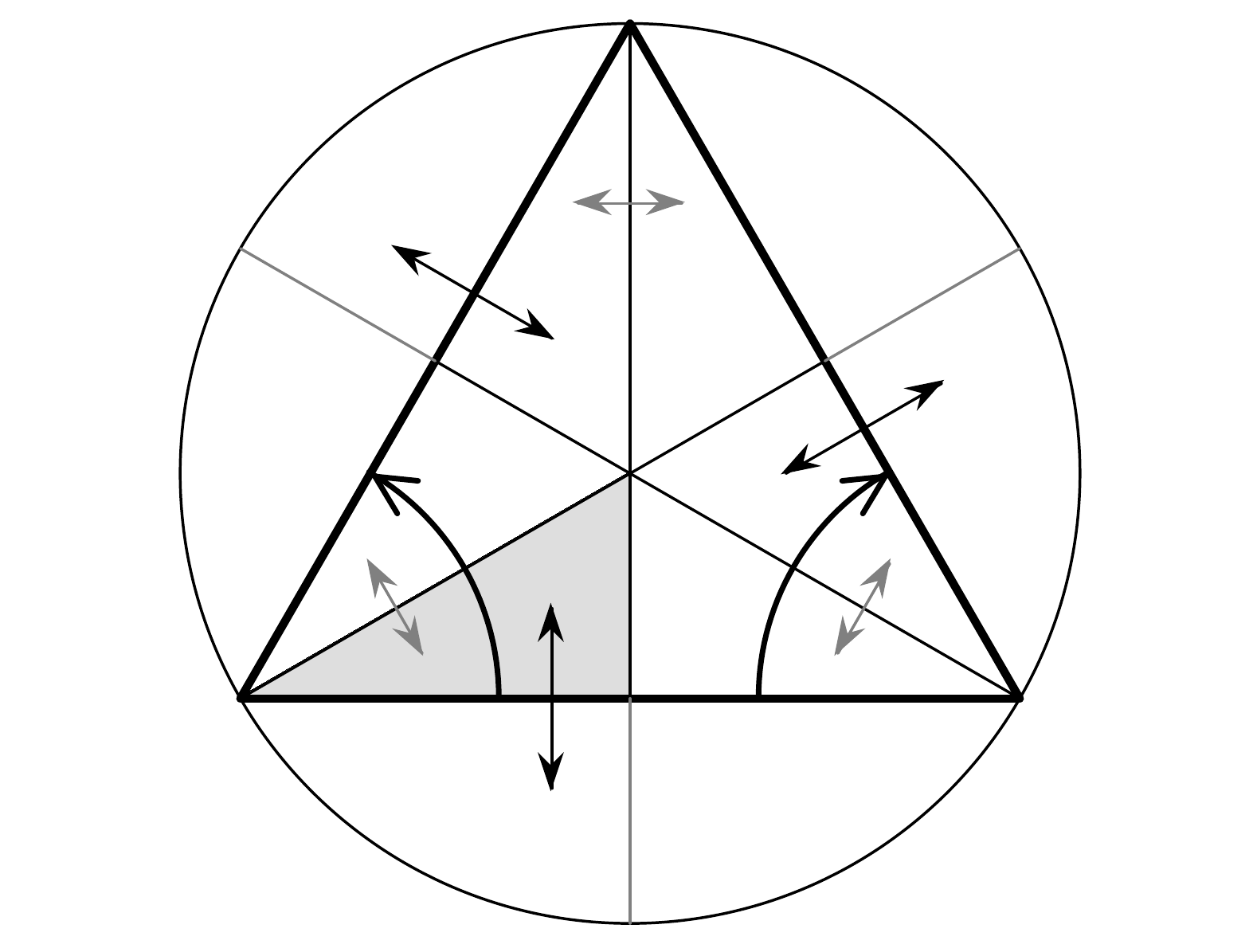}
\caption{$\Out(\W_3)$ is isomorphic to $\PGL(2,\Z)\cong T^*(2,3,\infty)$.}
\label{fig:aut3}
\end{figure}

The short exact sequence
\[
1\to\Inn(\W_3)\to\Aut(\W_3)\to\Out(\W_3)\to1
\]
does not split, that is, the whole group $\Out(\W_3)$ does not lift to $\Aut(\W_3)$. This is because the $f_i$'s cannot be chosen to have the full symmetry of $\Sym_3$; see Figure~\ref{fig:aut1}. At best, they can be chosen so as to admit the symmetry of $\Z/3=\{1,s_{123},s_{321}\}$, say, $f_1=\si_{12}$, $f_2=\si_{23}$ and $f_3=\si_{31}$, and we can lift only the part $\W_3\rtimes\Z/3$ of $\Out(\W_3)$.

\subsection{The isomorphism \texorpdfstring{$\Aut(\W_3)\cong\Aut(\F_2)$}{Aut(W3)=Aut(F2)}}\label{sec:isom}

Given a basis triple $(r_1,r_2,r_3)$ of $\W_3$, the pair of elements $(a,b):=(r_1r_2,r_2r_3)$ freely generate a subgroup $\F_2=\F(a,b)$ of index two. One can show that the correspondence
\begin{align}\label{eqn:isom}
(r_1,r_2,r_3)\longleftrightarrow(r_1r_2,r_2r_3)=(a,b)
\end{align}
induces an isomorphism
\[
\Aut(\W_3)\cong\Aut(\F_2).
\]
See Figure~\ref{fig:aut2}. Compare \cite{CD02}*{p.146} or \cite{PRW10}*{Remark 2}.

\begin{figure}[ht]
\labellist
\pinlabel {\Large $L$} at 160 206
\pinlabel {\large $L^{-1}$} at 340 74
\pinlabel {\Large $R$} at 330 210
\pinlabel {\large $R^{-1}$} at 236 74
\pinlabel {$a$} at 242 140
\pinlabel {$b$} at 290 140
\pinlabel {$ab$} at 266 158
\pinlabel {$a^2b$} at 200 224
\pinlabel {$ab^2$} at 290 226
\pinlabel {$r_1$} at 180 184
\pinlabel {$r_2$} at 280 80
\pinlabel {$r_3$} at 324 184
\pinlabel {$r_1r_2r_1$} at 46 250
\pinlabel {$r_3r_2r_3$} at 410 250
\pinlabel {$r_2r_3r_2$} at 230 6
\pinlabel {$r_2r_1r_2$} at 380 6
\endlabellist
\centering
\includegraphics[width=0.7\textwidth]{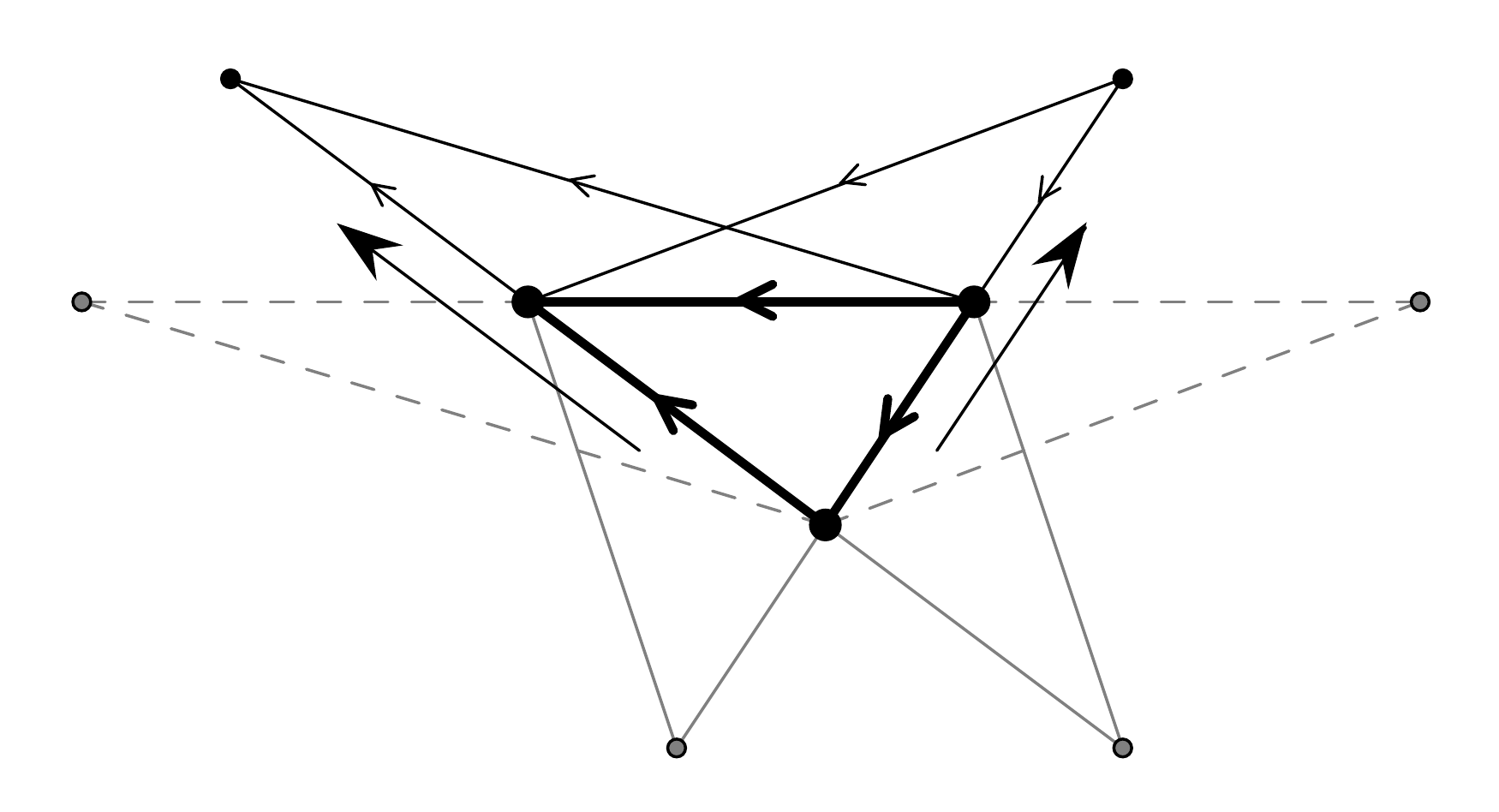}
\caption{The isomorphism $\Aut(\W_3)\cong\Aut(\F_2)$.}
\label{fig:aut2}
\end{figure}

Under the above isomorphism, the automorphisms $\si_{12}$, $\si_{23}$ and $\si_{31}$ correspond to the automorphisms $\widetilde{\mathfrak{J}}_2$, $\widetilde{\mathfrak{J}}_3$ and $\widetilde{\mathfrak{J}}_1$ of $\Aut(\F_2)$ defined by the following bases of $\F(a,b)$:
\begin{align}\label{eqn:J}
\begin{aligned}
(r_1,r_1r_2r_1,r_3)&\longleftrightarrow(r_2r_1,r_1r_2r_1r_3)=(a^{-1},a^2b),\\
(r_1,r_2,r_2r_3r_2)&\longleftrightarrow(r_1r_2,r_3r_2)=(a,b^{-1}),\\
(r_3r_1r_3,r_2,r_3)&\longleftrightarrow(r_3r_1r_3r_2,r_2r_3)=(b^{-1}a^{-1}b^{-1},b).
\end{aligned}
\end{align}
These appear also in \cite{GMST}*{\S.2.1 (6)}. On the other hand, the (opposite) automorphisms $\si_{13}$, $\si_{21}$ and $\si_{32}$ correspond to
\begin{align*}
(r_1,r_2,r_1r_3r_1)&\longleftrightarrow(r_1r_2,r_2r_1r_3r_1)=(a,a^{-1}b^{-1}a^{-1}),\\
(r_2r_1r_2,r_2,r_3)&\longleftrightarrow(r_2r_1,r_2r_3)=(a^{-1},b),\\
(r_1,r_3r_2r_3,r_3)&\longleftrightarrow(r_1r_3r_2r_3,r_3r_2)=(ab^2,b^{-1}).
\end{align*}

The inner automorphism $\io_2\in\Aut(\W_3)$ corresponds to the involutory automorphism $\mathfrak{e}\in\Aut(\F_2)$ (defined in Section~\ref{sec:farey}):
\begin{align*}
\io_2(r_1,r_2,r_3)=(r_2r_1r_2,r_2,r_2r_3r_2)
\longleftrightarrow
(r_2r_1,r_3r_2)=(a^{-1},b^{-1})=\mathfrak{e}(a,b).
\end{align*}

\subsection{Finding primitive representatives}\label{sec:cf}

We observed that $\Out(\W_3)$ does not lift to $\Aut(\W_3)$. Its subgroup $\W_3\cong\langle [f_1], [f_2], [f_3] \rangle$, however, can be lifted to $\Aut(\W_3)$ and still acts transitively on the set of Farey triangles. Therefore, in order to find representatives of the $\sim$-equivalence classes in $\V=\Prim/_\sim\subset\F_2/_\sim$, we may take the orbit of a basis triple of $\W_3$ under the action of 
the (lifted) subgroup $\W_3\cong\langle f_1, f_2, f_3 \rangle$ of $\Aut(\W_3)$, then convert its members to bases of $\F_2$ using the correspondence \eqref{eqn:isom}. For example, if we choose $f_1=\si_{12}$, $f_2=\si_{23}$ and $f_3=\si_{31}$, then the action is given by $\widetilde{\mathfrak{J}}_2$, $\widetilde{\mathfrak{J}}_3$ and $\widetilde{\mathfrak{J}}_1$ as in \eqref{eqn:J}. But this example has a drawback that the formulas look somewhat complicated due to lack of symmetry.

Instead of lifting $\W_3\subset\Out(\W_3)$ we rather lift the free submonoid of $\Out(\W_3)\cong\PGL(2,\Z)$ generated by $\begin{psmallmatrix}1&1\\0&1\end{psmallmatrix}$ and $\begin{psmallmatrix}1&0\\1&1\end{psmallmatrix}$. See Remark~\ref{rem:monoid}. In this way we can find ``half" of the representatives. Compare \cite{ADP99}*{Definition 2.6 and Lemma 5.2}. The lifted automorphisms are given by
\begin{align*}
L:=\si_{12}t_{12}&:(r_1,r_2,r_3)\mapsto(r_1r_2r_1,r_1,r_3)\longleftrightarrow(r_1r_2,r_1r_3)=(a,ab),\\
R:=\si_{32}t_{23}&:(r_1,r_2,r_3)\mapsto(r_1,r_3,r_3r_2r_3)\longleftrightarrow(r_1r_3,r_2r_3)=(ab,b).
\end{align*}
See Figures~\ref{fig:aut3} and \ref{fig:aut2} again. Let $\lt$ and $\rt$ denote the corresponding automorphisms of $\F_2$:
\begin{align*}
\lt(a)=a,\;\lt(b)=ab,\quad\textup{and}\quad
\rt(a)=ab,\;\rt(b)=b.
\end{align*}
We also consider the functions $\un{\lt},\un{\rt}:\F_2\times\F_2\to\F_2\times\F_2$ defined by
\begin{align*}
\un{\lt}(x,y)&=(x,xy)\\
\un{\rt}(x,y)&=(xy,y).
\end{align*}
Using the pair $\lt$ and $\rt$ (or the pair $\un{\lt}$ and $\un{\rt}$) we can generate the $\e$-Christoffel bases with $\e=(a,b)$. (See the paragraph prior to Remark~\ref{rem:monoid}.)

Suppose a positive rational number $p/q$ has continued fraction expansion
\[
p/q=[n_0; n_1, n_2, \ldots, n_k]=[n_0; n_1, n_2, \ldots, n_k-1,1].
\]
Let $\Ch_\e(p/q)$ denote the $\e$-Christoffel word associated to $p/q$. Then we have
\[
\Ch_\e(p/q)=a'b',
\]
where
\begin{align*}
(a',b')
&=\lt^{n_0}\rt^{n_1}\lt^{n_2}\cdots \lt^{n_k-1}(a,b)\\
&=\un{\lt}^{n_k-1}\cdots\un{\lt}^{n_2}\un{\rt}^{n_1}\un{\lt}^{n_0}(a,b),
\end{align*}
or
\begin{align*}
(a',b')
&=\lt^{n_0}\rt^{n_1}\lt^{n_2}\cdots \rt^{n_k-1}(a,b)\\
&=\un{\rt}^{n_k-1}\cdots\un{\lt}^{n_2}\un{\rt}^{n_1}\un{\lt}^{n_0}(a,b)
\end{align*}
depending on the parity of $k$.

Let us illustrate this using an example. Consider the continued fraction expansion
\[
\frac{17}{10}=1+\cfrac{1}{{1+\cfrac{1}{2+\cfrac{1}{3}}}}=[1;1,2,3]=[1;1,2,2,1].
\]
Then we have
\begin{align*}
(a,b)&\overset{\rt^2}{\longrightarrow}(ab^2,b) &
(a,b)&\overset{\un{\lt}^1}{\longrightarrow}(a,ab)\\
&\overset{\lt^2}{\longrightarrow}(a(a^2b)^2,a^2b) &
&\overset{\un{\rt}^1}{\longrightarrow}(a^2b,ab)\\
&\overset{\rt^1}{\longrightarrow}(ab((ab)^2b)^2,(ab)^2b) &
&\overset{\un{\lt}^2}{\longrightarrow}(a^2b,(a^2b)^2ab)\\
&\overset{\lt^1}{\longrightarrow}(a^2b((a^2b)^2ab)^2,(a^2b)^2ab),&
&\overset{\un{\rt}^2}{\longrightarrow}(a^2b((a^2b)^2ab)^2,(a^2b)^2ab).
\end{align*}
That is, 
\[
(a',b')
=\lt^1\rt^1\lt^2\rt^2(a,b)
=\un{\rt}^2\un{\lt}^2\un{\rt}^1\un{\lt}^1(a,b)
=(a^2b((a^2b)^2ab)^2,(a^2b)^2ab)
\]
Thus we obtain
\[
\Ch_\e(17/10)=a'b'=a^2b((a^2b)^2ab)^3.
\]

We emphasize the difference between $\lt^1\rt^1\lt^2\rt^2$ and $\un{\rt}^2\un{\lt}^2\un{\rt}^1\un{\lt}^1$. The former is a sequence of ``substitutions" as $\lt$ and $\rt$ are automorphisms of $\F_2$ defined by their actions on the distinguished basis $\e=(a,b)$, while the latter is a sequence of left or right ``concatenation" dictated by the functions $\un{\lt}$ and $\un{\rt}$. For practical purposes the latter method is preferable since it is compatible with the inductive generation of $\e$-Christoffel words as in Figure~\ref{fig:torus2} and Definition~\ref{def:chris}.

\medskip

Lastly, we remark that both $L$ and $R$ preserve the element $r_1r_2r_3$:
\begin{align*}
L(r_1r_2r_3)=(r_1r_2r_1)(r_1)(r_3)=r_1r_2r_3,\\
R(r_1r_2r_3)=(r_1)(r_3)(r_3r_2r_3)=r_1r_2r_3.
\end{align*}
Since we have
\[
(r_1r_2r_3)^2=(r_1r_2r_3)(r_2r_2)(r_1r_2r_3)=ab^{-1}a^{-1}b
\]
(as in \eqref{eqn:kappa}), the corresponding automorphisms $\lt$ and $\rt$ in $\Aut(\F_2)$ preserve the commutator $[a,b^{-1}]=ab^{-1}a^{-1}b$. Of course, one can check this directly from their definitions. Compare \cite{CD02}*{p.145 and p.147}, \cite{ASWY07}*{Lemma 2.1.7} and \cite{KR07}*{Remark 5.6(c)}.

\begin{bibdiv}
\begin{biblist}[\normalsize]

\bib{Aig13}{book}{
	author={Aigner, Martin},
	title={Markov's theorem and 100 years of the uniqueness conjecture},
	publisher={Springer, Cham},
	date={2013},
	pages={x+257},
}

\bib{ASWY07}{book}{
	author={Akiyoshi, Hirotaka},
	author={Sakuma, Makoto},
	author={Wada, Masaaki},
	author={Yamashita, Yasushi},
	title={Punctured torus groups and 2-bridge knot groups. I},
	series={Lecture Notes in Mathematics},
	volume={1909},
	publisher={Springer, Berlin},
	date={2007},
	pages={xliv+252},
}

\bib{ADP99}{article}{
	author={Alperin, R. C.},
	author={Dicks, Warren},
	author={Porti, J.},
	title={The boundary of the Gieseking tree in hyperbolic three-space},
	journal={Topology Appl.},
	volume={93},
	date={1999},
	number={3},
	pages={219--259},
	issn={0166-8641},
}

\bib{BLRS09}{book}{
	author={Berstel, Jean},
	author={Lauve, Aaron},
	author={Reutenauer, Christophe},
	author={Saliola, Franco V.},
	title={Combinatorics on words},
	series={CRM Monograph Series},
	volume={27},
	publisher={American Mathematical Society, Providence, RI},
	date={2009},
	pages={xii+147},
}

\bib{Bow}{article}{
	author={Bowditch, B. H.},
	title={Markoff triples and quasi-Fuchsian groups},
	journal={Proc. London Math. Soc. (3)},
	volume={77},
	date={1998},
	number={3},
	pages={697--736},
	issn={0024-6115},
}

\bib{BH}{book}{
   author={Bridson, Martin R.},
   author={Haefliger, Andr\'e},
   title={Metric spaces of non-positive curvature},
   series={Grundlehren der Mathematischen Wissenschaften [Fundamental Principles of Mathematical Sciences]},
   volume={319},
   publisher={Springer-Verlag, Berlin},
   date={1999},
}

\bib{Can15}{article}{
	author={Canary, Richard D.},
	title={Dynamics on character varieties: a survey},
	conference={
		title={Handbook of group actions. Vol. II},
	},
	book={
		series={Adv. Lect. Math. (ALM)},
		volume={32},
		publisher={Int. Press, Somerville, MA},
	},
	date={2015},
	pages={175--200},
}

\bib{CD02}{article}{
	author={Cannon, James W.},
	author={Dicks, Warren},
	title={On hyperbolic once-punctured-torus bundles},
	journal={Geom. Dedicata},
	volume={94},
	date={2002},
	pages={141--183},
	issn={0046-5755},
}

\bib{Cohn71}{article}{
	author={Cohn, Harvey},
	title={Representation of Markoff's binary quadratic forms by geodesics on
		a perforated torus},
	journal={Acta Arith.},
	volume={18},
	date={1971},
	pages={125--136},
	issn={0065-1036},
}

\bib{Cohn72}{article}{
	author={Cohn, Harvey},
	title={Markoff forms and primitive words},
	journal={Math. Ann.},
	volume={196},
	date={1972},
	pages={8--22},
	issn={0025-5831},
}

\bib{CMZ81}{article}{
	author={Cohen, M.},
	author={Metzler, W.},
	author={Zimmermann, A.},
	title={What does a basis of $F(a,\,b)$\ look like?},
	journal={Math. Ann.},
	volume={257},
	date={1981},
	number={4},
	pages={435--445},
	issn={0025-5831},
}

\bib{CG90}{article}{
	author={Collins, Donald J.},
	author={Gilbert, N. D.},
	title={Structure and torsion in automorphism groups of free products},
	journal={Quart. J. Math. Oxford Ser. (2)},
	volume={41},
	date={1990},
	number={162},
	pages={155--178},
	issn={0033-5606},
}

\bib{Con}{book}{
	author={Conway, John H.},
	title={The sensual (quadratic) form},
	series={Carus Mathematical Monographs},
	volume={26},
	publisher={Mathematical Association of America, Washington, DC},
	date={1997},
	pages={xiv+152},
}

\bib{CDP90}{book}{
	author={Coornaert, M.},
	author={Delzant, T.},
	author={Papadopoulos, A.},
	title={G\'eom\'etrie et th\'eorie des groupes},
	series={Lecture Notes in Mathematics},
	volume={1441},
	publisher={Springer-Verlag, Berlin},
	date={1990},
	pages={x+165},
}

\bib{DGLM}{article}{
	author={Delzant, Thomas},
	author={Guichard, Olivier},
	author={Labourie, Fran\c{c}ois},
	author={Mozes, Shahar},
	title={Displacing representations and orbit maps},
	conference={
		title={Geometry, rigidity, and group actions},
	},
	book={
		series={Chicago Lectures in Math.},
		publisher={Univ. Chicago Press, Chicago, IL},
	},
	date={2011},
	pages={494--514},
}

\bib{Fen}{book}{
	author={Fenchel, Werner},
	title={Elementary geometry in hyperbolic space},
	series={De Gruyter Studies in Mathematics},
	volume={11},
	publisher={Walter de Gruyter \& Co., Berlin},
	date={1989},
	pages={xii+225},
}

\bib{Fran02}{article}{
	author={Franzsen, W. N.},
	title={Automorphisms of Coxeter groups of rank 3 with infinite bonds},
	journal={J. Algebra},
	volume={248},
	date={2002},
	number={1},
	pages={381--396},
	issn={0021-8693},
}

\bib{Gil95}{article}{
   author={Gilman, Jane},
   title={Two-generator discrete subgroups of ${\rm PSL}(2,{\bf R})$},
   journal={Mem. Amer. Math. Soc.},
   volume={117},
   date={1995},
   number={561},
   pages={x+204},
   issn={0065-9266},
}

\bib{GK09}{article}{
   author={Gilman, Jane},
   author={Keen, Linda},
   title={Discreteness criteria and the hyperbolic geometry of palindromes},
   journal={Conform. Geom. Dyn.},
   volume={13},
   date={2009},
   pages={76--90},
   issn={1088-4173},
}

\bib{GK11}{article}{
   author={Gilman, Jane},
   author={Keen, Linda},
   title={Enumerating palindromes and primitives in rank two free groups},
   journal={J. Algebra},
   volume={332},
   date={2011},
   pages={1--13},
   issn={0021-8693},
}

\bib{Gol03}{article}{
	author={Goldman, William M.},
	title={The modular group action on real ${\rm SL}(2)$-characters of a
		one-holed torus},
	journal={Geom. Topol.},
	volume={7},
	date={2003},
	pages={443--486},
	issn={1465-3060},
}

\bib{Gol06}{article}{
	author={Goldman, William M.},
	title={Mapping class group dynamics on surface group representations},
	conference={
		title={Problems on mapping class groups and related topics},
	},
	book={
		series={Proc. Sympos. Pure Math.},
		volume={74},
		publisher={Amer. Math. Soc., Providence, RI},
	},
	date={2006},
	pages={189--214},
}

\bib{Gol09}{article}{
	author={Goldman, William M.},
	title={Trace coordinates on Fricke spaces of some simple hyperbolic
		surfaces},
	conference={
		title={Handbook of Teichm\"uller theory. Vol. II},
	},
	book={
		series={IRMA Lect. Math. Theor. Phys.},
		volume={13},
		publisher={Eur. Math. Soc., Z\"urich},
	},
	date={2009},
	pages={611--684},
}

\bib{GMST}{article}{
   author={Goldman, William M.},
   author={McShane, Greg},
   author={Stantchev, George},
   author={Tan, Ser Peow},
   title={Automorphisms of two-generator free groups and spaces of isometric actions on the hyperbolic plane},
   journal={Mem. Amer. Math. Soc.},
   note={(to appear)},
}

\bib{GR99}{article}{
	author={Gonz\'alez-Acu\~na, Francisco},
	author={Ram\'\i rez, Arturo},
	title={A composition formula in the rank two free group},
	journal={Proc. Amer. Math. Soc.},
	volume={127},
	date={1999},
	number={9},
	pages={2779--2782},
	issn={0002-9939},
}

\bib{Hat}{book}{
	author={Hatcher, Allen},
	title={Topology of numbers},
	publisher={\url{pi.math.cornell.edu/~hatcher/TN/TNpage.html}},
}

\bib{Jor03}{article}{
	author={J\o rgensen, Troels},
	title={On pairs of once-punctured tori},
	conference={
		title={Kleinian groups and hyperbolic 3-manifolds},
		address={Warwick},
		date={2001},
	},
	book={
		series={London Math. Soc. Lecture Note Ser.},
		volume={299},
		publisher={Cambridge Univ. Press, Cambridge},
	},
	date={2003},
	pages={183--207},
}

\bib{JM79}{article}{
	author={J\o rgensen, Troels},
	author={Marden, Albert},
	title={Two doubly degenerate groups},
	journal={Quart. J. Math. Oxford Ser. (2)},
	volume={30},
	date={1979},
	number={118},
	pages={143--156},
	issn={0033-5606},
}

\bib{KR07}{article}{
	author={Kassel, Christian},
	author={Reutenauer, Christophe},
	title={Sturmian morphisms, the braid group $B_4$, Christoffel words and
		bases of $F_2$},
	journal={Ann. Mat. Pura Appl. (4)},
	volume={186},
	date={2007},
	number={2},
	pages={317--339},
	issn={0373-3114},
}

\bib{Lub11}{article}{
	author={Lubotzky, Alexander},
	title={Dynamics of ${\rm Aut}(F_N)$ actions on group presentations and
		representations},
	conference={
		title={Geometry, rigidity, and group actions},
	},
	book={
		series={Chicago Lectures in Math.},
		publisher={Univ. Chicago Press, Chicago, IL},
	},
	date={2011},
	pages={609--643},
}

\bib{Lup}{book}{
   author={Lupi, Damiano},
   title={Primitive stability and Bowditch conditions for
rank $2$ free group representations},
   note={Thesis (Ph.D.)--University of Warwick (United Kingdom)},
   date={2015},
}

\bib{LS77}{book}{
	author={Lyndon, Roger C.},
	author={Schupp, Paul E.},
	title={Combinatorial group theory},
	publisher={Springer-Verlag, Berlin-New York},
	date={1977},
	pages={xiv+339},
}

\bib{Min}{article}{
	author={Minsky, Yair N.},
	title={On dynamics of $Out(F_n)$ on $\mathrm{PSL}_2(\C)$ characters},
	journal={Israel J. Math.},
	volume={193},
	date={2013},
	number={1},
	pages={47--70},
	issn={0021-2172},
}

\bib{Muhl98}{article}{
	author={M\"uhlherr, Bernhard},
	title={Automorphisms of graph-universal Coxeter groups},
	journal={J. Algebra},
	volume={200},
	date={1998},
	number={2},
	pages={629--649},
	issn={0021-8693},
}

\bib{Niel17}{article}{
	author={Nielsen, J.},
	title={Die Isomorphismen der allgemeinen, unendlichen Gruppe mit zwei
		Erzeugenden},
	journal={Math. Ann.},
	volume={78},
	date={1917},
	number={1},
	pages={385--397},
}

\bib{OZ81}{article}{
	author={Osborne, R. P.},
	author={Zieschang, H.},
	title={Primitives in the free group on two generators},
	journal={Invent. Math.},
	volume={63},
	date={1981},
	number={1},
	pages={17--24},
	issn={0020-9910},
}

\bib{Pig}{article}{
	AUTHOR = {Piggott, Adam},
	TITLE = {Palindromic primitives and palindromic bases in the free group of rank two},
	JOURNAL = {J. Algebra},
	VOLUME = {304},
	YEAR = {2006},
	NUMBER = {1},
	PAGES = {359--366},
	ISSN = {0021-8693},
}

\bib{PRW10}{article}{
	author={Piggott, Adam},
	author={Ruane, Kim},
	author={Walsh, Genevieve S.},
	title={The automorphism group of the free group of rank 2 is a CAT(0)
		group},
	journal={Michigan Math. J.},
	volume={59},
	date={2010},
	number={2},
	pages={297--302},
	issn={0026-2285},
}

\bib{Series}{article}{
	author={Series, Caroline},
    title={Primitive stability and Bowditch's BQ-condition are equivalent},
    date={2019},
    journal={\tt arXiv:1901.01396 [math.GT]},
}

\bib{TWZ08}{article}{
	author={Tan, Ser Peow},
	author={Wong, Yan Loi},
	author={Zhang, Ying},
	title={Generalized Markoff maps and McShane's identity},
	journal={Adv. Math.},
	volume={217},
	date={2008},
	number={2},
	pages={761--813},
	issn={0001-8708},
}

\bib{TWZ081}{article}{
	author={Tan, Ser Peow},
	author={Wong, Yan Loi},
	author={Zhang, Ying},
	title={End invariants for ${\rm SL}(2,\mathbb C)$ characters of the
		one-holed torus},
	journal={Amer. J. Math.},
	volume={130},
	date={2008},
	number={2},
	pages={385--412},
	issn={0002-9327},
}

\bib{TX18}{article}{
	author={Tan, Ser Peow},
	author={Xu, Binbin},
    title={Carrier graphs for representations of the rank two free group into isometries of hyperbolic three space},
    date={2018},
    journal={\tt arXiv:1807.07277 [math.GT]},
}

\end{biblist}
\end{bibdiv}

\vspace{3\baselineskip}\noindent
Jaejeong Lee\\
\url{jjlee@kias.re.kr}

\vspace{1\baselineskip}\noindent
Binbin Xu\\
\url{binbin.xu@uni.lu}

\end{document}